\DeclareSymbolFont{cyrletters}{OT2}{wncyr}{m}{n}
\DeclareMathSymbol{\Sha}{\mathalpha}{cyrletters}{"58}
\newcommand\Q{{\mathbb Q}}
\newcommand\Z{{\mathbb Z}}
\newcommand\F{{\mathbb F}}
\newcommand{\op}[1]{\operatorname{#1}}
\DeclareMathOperator{\Zx}{\mathbb{Z}_p\llbracket x\rrbracket}
\newtheorem*{Theorem*}{Theorem}
\newtheorem{Th}{Theorem}[section]
\newtheorem{Lemma}[Th]{Lemma}
\newtheorem{question}{Question}
\newtheorem*{Ques*}{Question}
\newtheorem{Proposition}[Th]{Proposition}
\newtheorem{Remark}[Th]{Remark}
\newtheorem{Corollary}[Th]{Corollary}
\newtheorem{Conjecture}[Th]{Conjecture}
\newtheorem{Definition}[Th]{Definition}
\newtheorem*{Conj*}{Conjecture}
\newtheorem{lthm}{Theorem}
\theoremstyle{remark}
\newtheorem*{Rem*}{Remark}
\theoremstyle{definition}
\newcommand\mtx[4] { \left( {\begin{array}{cc}
   #1 & #2 \\
   #3 & #4 \\
  \end{array} } \right)}
\begin{document}

\title[Arithmetic statistics and Diophantine Stability]{Arithmetic statistics and Diophantine Stability for Elliptic Curves}

\author[A.~Ray]{Anwesh Ray}
\address[Ray]{Department of Mathematics\\
University of British Columbia\\
Vancouver BC, Canada V6T 1Z2}
\email{anweshray@math.ubc.ca}


\subjclass[2010]{11G05, 11R23}
\keywords{Arithmetic statistics, Iwasawa theory, Selmer groups, elliptic curves.}

\begin{abstract}
We study the growth and stability of the Mordell-Weil group and Tate-Shafarevich group of an elliptic curve defined over the rationals, in various cyclic Galois extensions of prime power order. Mazur and Rubin introduced the notion of diophantine stability for the Mordell-Weil group an elliptic curve $E_{/\Q}$ at a given prime $p$. Inspired by their definition of stability for the Mordell-Weil group, we introduce an analogous notion of stability for the Tate-Shafarevich group, called $\Sha$-stability. Using methods in arithmetic statistics and Iwasawa theory, we study the diophantine stability of elliptic curves on average. First, we prove results for a fixed elliptic curve $E$ and varying prime $p$. It is shown that any non-CM elliptic curve of rank 0 defined over the rationals is diophantine stable and $\Sha$-stable at $100\%$ of primes $p$. Next, we show that standard conjectures on rank distribution give lower bounds for the proportion of rational elliptic curves $E$ that are diophantine stable at a fixed prime $p\geq 11$. Related questions are studied for rank jumps and growth of ranks Tate-Shafarevich groups on average in prime power cyclic extensions.
\end{abstract}

\maketitle
\section{Introduction}
\par In \cite{mazurrubin}, B.~Mazur and K.~Rubin introduced the notion of \emph{diophantine stability} for an irreducible variety $V$ defined over a number field $K$. Let $p$ be a prime number. Given a number field extension $L/K$, $V$ is said to be diophantine stable in $L$ if $V(L)=V(K)$. The variety $V$ is said to be diophantine stable at the prime $p$ if there are abundantly many cyclic $p$-extensions of $p$-power order in which $V$ is diophantine stable. More precisely, $V$ is diophantine stable at $p$ if for any choice of $n\in \Z_{\geq 1}$ and finite set of primes $\Sigma$ of $K$, $V$ is diophantine stable in infinitely many $\Z/p^n\Z$-extensions $L/K$ in which the primes of $\Sigma$ split. Mazur and Rubin showed that if $A$ is a simple abelian variety defined over $K$ such that all $\bar{K}$-endomorphisms of $A$ are defined over $K$, then, $A$ is diophantine stable at a postive density set of primes of $K$. A similar result is proved for any curve of genus $\geq 1$ defined over a number field.
\par We study diophantine stability for non-CM elliptic curves $E$ defined over the rationals with good ordinary reduction at $p$. Given a pair $(E,p)$, we ask if $E$ is diophantine stable at $p$. From a statistical point of view, we are interested in the following questions.
\begin{question}
\begin{enumerate}
    \item Given an elliptic curve $E_{/\Q}$, what can be said about proportion of primes $p$ at which $E$ is diophantine stable at $p$?
    \item Given a prime $p$, what can be said about proportion of elliptic curves $E_{/\Q}$ ordered by height that are diophantine stable at $p$?
\end{enumerate}
\end{question}
Inspired by the notion of diophantine stability (for the Mordell-Weil group), we introduce an analogous notion for the $p$-primary part of the Tate-Shafarevich group, see Definition \ref{shastab}. Given an elliptic curve $E_{/\Q}$, we shall assume throughout this paper that $\Sha(E/\Q)$ is finite. Let $p$ be a prime number and $L/\Q$ a cyclic $p$-extension. We say that $E$ is $\Sha$-stable in $L/\Q$ if $\#\Sha(E/L)[p^\infty]=\#\Sha(E/\Q)[p^\infty]$. Further, $E$ is said to be $\Sha$-stable at $p$ if for any $(n, \Sigma)$, there are infinitely many $\Z/p^n\Z$-extensions in which the primes of $\Sigma$ split and in which $E$ is $\Sha$-stable.
\begin{lthm}[Theorem \ref{thm section 4 main}]\label{thmA}
Let $E_{/\Q}$ be an elliptic curve without complex multiplication such that $\op{rank} E(\Q)=0$. Then, $E$ is diophantine stable and $\Sha$-stable at $100\%$ of the primes $p$.
\end{lthm}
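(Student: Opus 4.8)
The plan is to run everything through Iwasawa theory over the cyclotomic $\Z_p$-extension: the hypothesis $\rank E(\Q)=0$ forces the cyclotomic $p$-primary Selmer group to vanish, and the required family of $\Z/p^n\Z$-extensions will be produced by a Chebotarev argument tailored so that this vanishing survives in the tower. First I would fix the set $\mathcal P$ of primes $p\ge 5$ such that $E$ has good ordinary reduction at $p$, $p\nmid\#\widetilde E(\F_p)$ (non-anomalous), $\bar\rho_{E,p}\colon G_\Q\to\GL_2(\F_p)$ is surjective, and $p\nmid\#E(\Q)_{\tors}\cdot\#\Sha(E/\Q)\cdot\prod_\ell c_\ell(E)$. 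Then $\mathcal P$ has density $1$: for non-CM $E$ the supersingular primes have density $0$ (Serre, Elkies), the anomalous primes all have $a_p=1$ and so have density $0$, $\bar\rho_{E,p}$ is surjective for all but finitely many $p$ (Serre), and the divisibility condition removes finitely many primes. It suffices to prove $E$ is diophantine stable and $\Sha$-stable at every $p\in\mathcal P$.

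Fix $p\in\mathcal P$. Since $\rank E(\Q)=0$, $\Sha(E/\Q)$ is finite, and $p\nmid\#\Sha(E/\Q)\cdot\#E(\Q)_{\tors}$, the descent sequence gives $\Sel_{p^\infty}(E/\Q)=0$. Because $E$ is good ordinary and non-anomalous at $p$ and $E(\Q)[p]=0$, Mazur's control theorem for $\Q_\cyc/\Q$ is an isomorphism (the local corrections at $p$ and at the bad primes vanish for $p\in\mathcal P$), so $(\Selp)^{\Gamma}\cong\Sel_{p^\infty}(E/\Q)=0$; since $(\Selp)^\vee$ is finitely generated over $\Lambda=\Z_p[[\Gamma]]$ and killed by $\gamma-1\in\mathfrak m$, Nakayama's lemma gives $\Selp=0$, i.e. $\mu(E/\Q_\cyc)=\lambda(E/\Q_\cyc)=0$.

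Next, fix $n\ge 1$ and a finite set $\Sigma$ of primes. By Chebotarev in $\Q\bigl(E[p],\zeta_{p^n},\{v^{1/p^n}:v\in\Sigma\}\bigr)$ there are infinitely many primes $q$, prime to $p\cdot\cond(E)$, with $q$ split completely in $\Q(\zeta_{p^n},\{v^{1/p^n}\})$ — so $q\equiv 1\pmod{p^n}$ and each $v\in\Sigma$ is a $p^n$-th power mod $q$ — and $\bar\rho_{E,p}(\Frob_q)$ conjugate to a fixed element of $\SL_2(\F_p)$ having $1$ as no eigenvalue; the two conditions are compatible since such an element fixes $\mu_p$, as does a split prime. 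Let $L_q\subset\Q(\zeta_q)$ have $\Gal(L_q/\Q)\cong\Z/p^n\Z$; it is ramified only at $q$, totally tamely ramified there, splits $\Sigma$, satisfies $p\nmid\#\widetilde E(\F_q)$, and the $L_q$ are pairwise distinct. Set $L_{q,\infty}=L_q\Q_\cyc$, a $\Z/p^n\Z$-extension of $\Q_\cyc$. The Hachimori--Matsuno Riemann--Hurwitz formula for Selmer groups (valid as $E$ is good ordinary and $\mu(E/\Q_\cyc)=0$) gives $\mu(E/L_{q,\infty})=0$ and
\[
\lambda(E/L_{q,\infty})=p^n\,\lambda(E/\Q_\cyc)+\sum_{w\mid q}\delta_w,
\]
with $\delta_w\ge 0$ nonzero only if $E$ has split multiplicative reduction at $w$, or good reduction there and $\widetilde E$ has a point of order $p$ over the residue field of $w$. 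Since $E$ has good reduction at $q$, and $p\nmid\#\widetilde E(\F_q)$ forces $p\nmid\#\widetilde E(\F_{q^{p^j}})$ for all $j$ ($x\mapsto x^p$ being injective on $\bar\F_p^\times$, so no new eigenvalue $1$ appears), while the residue extensions above $q$ in $L_{q,\infty}/\Q$ are pro-$p$, every $\delta_w=0$. Hence $\mu(E/L_{q,\infty})=\lambda(E/L_{q,\infty})=0$, so $\Sel_{p^\infty}(E/L_{q,\infty})^\vee$ is finite; as $E$ is good ordinary and $\bar\rho_{E,p}$ irreducible, it has no nonzero finite $\Lambda$-submodule (Greenberg), so $\Sel_{p^\infty}(E/L_{q,\infty})=0$, and the control theorem for $L_{q,\infty}/L_q$ (again an isomorphism, using $E(L_{q,\infty})[p]=0$) gives $\Sel_{p^\infty}(E/L_q)=0$. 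Therefore $\rank E(L_q)=0$ and $\Sha(E/L_q)[p^\infty]=0=\Sha(E/\Q)[p^\infty]$, so $E$ is $\Sha$-stable in $L_q/\Q$; moreover $E(L_q)_{\tors}=E(\Q)_{\tors}$, since $\GL_2(\F_p)$ has no nontrivial cyclic $p$-power quotient for $p\ge 5$ (ruling out new $p$-torsion, $\bar\rho_{E,p}$ being surjective) and any character producing new $\ell$-torsion for $\ell\ne p$ is ramified only at primes dividing $\ell\cdot\cond(E)\ne q$, hence does not factor through $\Gal(L_q/\Q)$. Thus $E(L_q)=E(\Q)$; letting $q$ and $(n,\Sigma)$ vary, $E$ is diophantine stable and $\Sha$-stable at every $p\in\mathcal P$.

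I expect the main obstacle to be the penultimate step: showing that all the error terms genuinely vanish — the local corrections in the two applications of Mazur's control theorem and, above all, the local contributions $\delta_w$ in the Riemann--Hurwitz formula — so that the vanishing of the Iwasawa invariants established over $\Q_\cyc$ really descends to the finite layer $L_q$. The vanishing of the $\delta_w$ is precisely where the Chebotarev condition $p\nmid\#\widetilde E(\F_q)$, and its persistence under pro-$p$ residue extensions, are used in an essential way; the Chebotarev bookkeeping and the harmless restriction to $p\ge 5$ are routine by comparison.
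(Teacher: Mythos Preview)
Your argument is correct and follows the same Iwasawa-theoretic skeleton as the paper: isolate a density-$1$ set of primes $p$ at which $\Sel_{p^\infty}(E/\Q_\infty)=0$, build $\Z/p^n\Z$-extensions $L/\Q$ ramified only at primes where the local terms in the Hachimori--Matsuno formula vanish, conclude $\mu_p(E/L)=\lambda_p(E/L)=0$, and descend. The implementation differs in two places worth recording. To force a prescribed $\Sigma$ to split, the paper chooses $\#\Sigma+1$ auxiliary primes $\ell_i\equiv 1\pmod{p^n}$ with $p\nmid\#\widetilde E(\F_{\ell_i})$ and uses a linear-algebra argument inside $\Gal(\Q(\mu_{\prod\ell_i})/\Q)$ (Lemma~\ref{s4 splitting}) to extract a $\Z/p^n\Z$-quotient splitting $\Sigma$; you instead take a \emph{single} prime $q$ and impose the reciprocity condition that each $v\in\Sigma$ be a $p^n$-th power mod $q$, which is neater and yields extensions ramified at exactly one place. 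For $E(L)_{\tors}=E(\Q)_{\tors}$ the paper quotes Gonz\'alez-Jim\'enez--Najman (whence its restriction to $p\geq 11$), while your direct ramification argument exploits that $L_q$ is ramified only at $q\nmid\cond(E)$ and works already for $p\geq 5$. Finally, the paper packages the descent from $L_\infty$ to $L$ via the Euler-characteristic formalism (Lemma~\ref{lemma ECF mulambda} and Corollary~\ref{cor ECF}), whereas you use the control theorem and Nakayama directly; these are equivalent, but note a small slip in your write-up: $(\Selp)^\vee$ is not ``killed by $\gamma-1$'' --- rather its $(\gamma-1)$-coinvariants vanish, which is what Nakayama needs.
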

Given an elliptic curve $E$ satisfying the above conditions, it is shown in earlier work (see \cite[Theorem A]{BKR}) that for $100\%$ of the primes $p$, $E(L)=E(\Q)$ for infinitely many $\Z/p\Z$-extensions $L/\Q$. The above result is much stronger, in fact for any pair $(n, \Sigma)$ the method gives an explicit recipe to construct extensions for which $E(L)=E(\Q)$, see Remark \ref{remark}. One is able to prove a refinement of Theorem \ref{thmA} when $E(\Q)_{\op{tors}}\neq 0$.
\begin{lthm}[Theorem \ref{4.15}]Let $E_{/\Q}$ be an elliptic curve without complex multiplication such that $E(\Q)_{\op{tors}}\neq 0$ and $\op{rank} E(\Q)=0$. Then $E$ is diophantine stable \emph{and} $\Sha$-stable at all but a finite set of primes $p$ at which it has good ordinary reduction.
\end{lthm}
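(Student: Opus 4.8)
The plan is to deduce this from Theorem~\ref{thm section 4 main} by examining which primes its proof actually settles. Inspecting that proof, one finds that a prime $p$ of good ordinary reduction is covered — i.e.\ for every $(n,\Sigma)$ it produces infinitely many $\Z/p^n\Z$-extensions $L/\Q$ in which the primes of $\Sigma$ split and with $E(L)=E(\Q)$ and $\#\Sha(E/L)[p^\infty]=\#\Sha(E/\Q)[p^\infty]$ — as soon as $p$ avoids the finitely many primes dividing $\#E(\Q)_{\op{tors}}\cdot\#\Sha(E/\Q)\cdot\prod_\ell c_\ell(E)$, avoids the finitely many primes at which $\bar\rho_{E,p}$ is reducible, \emph{and} is non-anomalous, i.e.\ $a_p\not\equiv 1\pmod p$. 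Non-anomalousness is exactly the input that makes the error terms at $p$ in Mazur's control theorem vanish along the cyclic $p$-layers occurring in $L/\Q$ (for good ordinary $p$ it is equivalent to the absence of new $p$-torsion in the residue field extensions above $p$); this is what propagates the vanishing of $\Sel_{p^\infty}(E/\Q)$ — which holds automatically once $p\nmid\#\Sha(E/\Q)$, since $\op{rank}E(\Q)=0$ forces $\Sel_{p^\infty}(E/\Q)\cong\Sha(E/\Q)[p^\infty]$ — to the vanishing of $\Sel_{p^\infty}(E/L)$ via Nakayama's lemma over $\Z_p[\Gal(L/\Q)]$, and the latter vanishing then gives $E(L)=E(\Q)$ and $\Sha(E/L)[p^\infty]=0$. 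Hence, within the good ordinary range, the only exceptional set that is not manifestly finite is the set of anomalous primes, and everything reduces to showing that $E(\Q)_{\op{tors}}\neq 0$ makes that set finite.

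For this, let $P\in E(\Q)$ be a point of prime order $\ell$ (so $\ell\in\{2,3,5,7\}$ by Mazur's torsion theorem), and let $p\nmid\ell$ be a prime of good reduction. Reduction modulo $p$ is injective on prime-to-$p$ torsion, so $\widetilde{E}(\F_p)$ contains a point of order $\ell$ and $\ell\mid\#\widetilde{E}(\F_p)$. If $p$ is moreover anomalous, then $p\mid p+1-a_p=\#\widetilde{E}(\F_p)$, hence $p\ell\mid\#\widetilde{E}(\F_p)$ because $\gcd(p,\ell)=1$. Combined with the Hasse bound $\#\widetilde{E}(\F_p)\le(\sqrt p+1)^2$ this yields $2p\le p\ell\le(\sqrt p+1)^2$, i.e.\ $(\sqrt p-1)^2\le 2$, which forces $p\le 5$. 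Therefore every anomalous prime of $E$ lies in $\{2,3,5\}\cup\{\ell\}\subseteq\{2,3,5,7\}$; in particular there are only finitely many of them.

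Combining the two paragraphs, for every prime $p$ of good ordinary reduction outside the finite set consisting of $\{2,3,5,7\}$ together with the primes dividing $\#E(\Q)_{\op{tors}}\cdot\#\Sha(E/\Q)\cdot\prod_\ell c_\ell(E)$ and the primes at which $\bar\rho_{E,p}$ is reducible, the construction of Theorem~\ref{thm section 4 main} applies verbatim and exhibits, for each $(n,\Sigma)$, infinitely many $\Z/p^n\Z$-extensions of $\Q$ in which the primes of $\Sigma$ split and in which $E$ is both diophantine stable and $\Sha$-stable; this is the assertion of the theorem. The crux is the bookkeeping of the first paragraph — pinpointing that, once good ordinary reduction is assumed, anomalous primes are the sole obstruction to replacing ``$100\%$'' by ``all but finitely many'' — after which the Hasse-bound argument is what does the work; the more technical point that non-anomalousness is precisely what kills the control-theorem error at $p$ is already built into the proof of Theorem~\ref{thm section 4 main}.
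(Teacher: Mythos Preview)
Your proof is correct and follows the paper's route: invoke Theorem~\ref{thm section 4 main} for $p\geq 11$ in $\mathcal{P}_E$, then use the torsion hypothesis together with the Hasse bound to show that the set of anomalous primes --- the only obstruction to $p\in\mathcal{P}_E$ not already manifestly finite --- is itself finite, which is precisely the content of Lemma~\ref{boring lemma}. Your Hasse-bound argument (injectivity of reduction on prime-to-$p$ torsion giving $p\ell\mid\#\widetilde{E}(\F_p)\le(\sqrt p+1)^2$) is a slightly more direct rephrasing of the paper's trace computation for $\bar\rho_{E,\ell}(\sigma_p)$, both reducing to $\ell\mid\#\widetilde{E}(\F_p)$; one small expository caveat is that your gloss on how non-anomalousness enters Theorem~\ref{thm section 4 main} (``control-theorem error at $p$'' plus Nakayama over $\Z_p[\Gal(L/\Q)]$) does not match the paper's actual mechanism, which instead passes through $\Q_\infty$ and $L_\infty$ via the Euler characteristic formula and Kida's formula --- but since you explicitly defer to that proof, this does not affect the argument.
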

Furthermore, the finite set of primes outside which the results apply is made explicit. For instance, the elliptic curve $X_0(14)$ is diophantine and $\Sha$-stable at all primes $p\geq 11$ at it has good ordinary reduction.
\par Next, we study the growth of ranks and Tate-Shafarevich groups in cyclic $p$-extensions. Independent results are proved for elliptic curves of rank 0 and positive rank.

\begin{lthm}[Theorem \ref{5.1 main theorem}]
Let $E$ be an elliptic curve without complex multiplication such that $\op{rank} E(\Q)=0$. Then, for $100\%$ of the primes $p$, there are abundantly many $p$-cyclic extensions in which the $p$-primary Selmer group grows. More precisely, for any $n\in \Z_{\geq 1}$ and a finite set of primes $\Sigma$, there are infinitely many $\Z/p^n\Z$-extensions $L/\Q$ such that 
\begin{enumerate}
    \item all primes of $\Sigma$ split in $L$,
    \item $\op{Sel}_{p^\infty}(E/\Q)=0$ and $\op{Sel}_{p^\infty}(E/\Q)\neq 0$.
\end{enumerate}
\end{lthm}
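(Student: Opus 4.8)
\medskip

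\noindent\textbf{Proof strategy.} The plan is to adapt the twisting construction of Mazur--Rubin \cite{mazurrubin}, in the form used in \cite{BKR}, but to arrange the Chebotarev condition on the auxiliary ramified prime so as to \emph{force} the Selmer group to grow rather than to keep it fixed. First one isolates the good set of primes: for $100\%$ of primes $p$ the curve $E$ has good ordinary reduction at $p$ (the supersingular primes have density zero since $E$ is non-CM), the mod-$p$ representation $\bar\rho_{E,p}\colon G_\Q\to\GL_2(\F_p)$ is surjective (Serre), and $\op{Sel}_{p^\infty}(E/\Q)\cong\Sha(E/\Q)[p^\infty]=0$ (using $\op{rank}E(\Q)=0$ and the assumed finiteness of $\Sha$). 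Surjectivity of $\bar\rho_{E,p}$ additionally forces $E(L)[p]=0$ for every $p$-power extension $L/\Q$ (for $p\ge 5$ the group $\GL_2(\F_p)$ has no nonzero fixed vector and no nontrivial $p$-quotient); in particular $\op{Sel}_p(E/\Q)\hookrightarrow\op{Sel}_{p^\infty}(E/\Q)=0$. Fix such a $p$, together with $n\ge 1$ and a finite set of primes $\Sigma$.

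Next comes the construction of the extensions. Put $F_n=\Q\bigl(E[p^n],\,\{q^{1/p^n}:q\in\Sigma\cup\{p\}\}\bigr)$ and, by the Chebotarev density theorem, choose from the positive-density set of primes $\ell$ (not in $\Sigma$, not dividing $p$ or the conductor) with $\Frob_\ell$ trivial in $\Gal(F_n/\Q)$. Such an $\ell$ satisfies $\ell\equiv 1\pmod{p^n}$, has $E[p^n]\subseteq E(\Q_\ell)$, and makes every $q\in\Sigma$ a $p^n$-th power modulo $\ell$. Let $L_\ell\subset\Q(\mu_\ell)$ be the subfield with $\Gal(L_\ell/\Q)\cong\Z/p^n\Z$: it is totally (tamely) ramified at $\ell$ and unramified elsewhere, every prime of $\Sigma$ splits completely in it, and distinct admissible $\ell$ yield distinct $L_\ell$. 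Hence it suffices to prove $\op{Sel}_{p^\infty}(E/L_\ell)\neq 0$ for each such $\ell$, since $\op{Sel}_{p^\infty}(E/\Q)=0$.

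The heart of the matter is producing a nonzero Selmer class over $L_\ell$. I would work with $\F_p$-coefficients and the relaxed Selmer group $\op{Sel}^{\{\ell\}}(E/\Q)\subseteq H^1(G_{\Q,S\cup\{\ell\}},E[p])$, where $S$ is the set of bad places together with $p$ and $\infty$ and where the condition at $\ell$ is simply dropped. Because $\Frob_\ell=1$ on $E[p]$, the local group $H^1(\Q_\ell,E[p])$ has $\F_p$-dimension $4$ while the local Kummer condition $H^1_f(\Q_\ell,E[p])=\mathrm{image}\bigl(E(\Q_\ell)/p\bigr)=H^1_{\mathrm{ur}}(\Q_\ell,E[p])$ has dimension $2$; by the Greenberg--Wiles global duality formula, together with $\op{Sel}_p(E/\Q)=0$ (so the dual strict Selmer group at $\ell$ also vanishes), one gets $\dim_{\F_p}\op{Sel}^{\{\ell\}}(E/\Q)=2$. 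Any nonzero $c$ in this space is ramified at $\ell$ (else $c\in\op{Sel}_p(E/\Q)=0$). Now restrict $c$ to $L_\ell$: away from $\ell$ the extension $L_\ell/\Q$ is unramified, so functoriality of the local conditions --- including stability of the Greenberg ordinary condition at $p$ under unramified base change --- keeps $\op{res}_{L_\ell}(c)$ admissible at every $w\nmid\ell$; at the unique place $\lambda\mid\ell$ the extension $L_{\ell,\lambda}/\Q_\ell$ is totally tamely ramified of degree $p^n$, and an inertia computation shows that restriction carries all of $H^1(\Q_\ell,E[p])$ into $H^1_{\mathrm{ur}}(L_{\ell,\lambda},E[p])$, which is exactly the local Selmer condition there --- the ramified class $c$ thus becomes unramified upstairs. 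Hence $\op{res}_{L_\ell}(c)\in\op{Sel}_p(E/L_\ell)$, and it is nonzero because the restriction map on $H^1(-,E[p])$ is injective: its kernel $H^1\bigl(\Gal(L_\ell/\Q),E(L_\ell)[p]\bigr)$ vanishes by $E(L_\ell)[p]=0$. Finally, since $E(L_\ell)[p]=0$, a nonzero class in $\op{Sel}_p(E/L_\ell)$ forces $\op{Sel}_{p^\infty}(E/L_\ell)\neq 0$, which is the asserted growth; running over the infinitely many admissible $\ell$ produces infinitely many such $L_\ell$.

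The density statement for $p$ and the class-field-theoretic bookkeeping for $L_\ell$ are routine; the step I expect to be the genuine obstacle is the local analysis at $\lambda\mid\ell$ in the last paragraph --- checking that a genuinely ramified global class in $\op{Sel}^{\{\ell\}}(E/\Q)$ lands inside the elliptic-curve Selmer condition of $L_{\ell,\lambda}$ after restriction, while simultaneously not dying under restriction, which is precisely where surjectivity of $\bar\rho_{E,p}$ enters. (No deep Iwasawa theory is needed for this growth direction beyond $\op{Sel}_{p^\infty}(E/\Q)=0$, in contrast with the stability statements such as Theorem \ref{thmA}, where vanishing of the cyclotomic Selmer group is required.)
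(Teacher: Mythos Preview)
Your argument is correct and takes a genuinely different route from the paper's proof.

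The paper argues via Iwasawa theory. It takes $p\in\mathcal{P}_E$ (so in particular $\op{Sel}_{p^\infty}(E/\Q_\infty)=0$, hence $\mu_p(E/\Q)=\lambda_p(E/\Q)=0$), chooses a set $\Omega$ of $|\Sigma|+1$ primes $\ell\equiv 1\pmod{p^n}$ with $p\mid\#\widetilde{E}(\F_\ell)$, and builds (via Lemma~\ref{s4 splitting}) a $\Z/p^n\Z$-extension $L$ ramified only in $\Omega$ in which $\Sigma$ splits. If both $\op{rank}E(L)=0$ and $\Sha(E/L)[p^\infty]=0$ held, the Euler characteristic formula would force $\chi_t(\Gamma_L,E[p^\infty])=1$ and hence $\lambda_p(E/L)=0$; but the Hachimori--Matsuno formula~\eqref{kidaformula} gives $\lambda_p(E/L)>0$ because some ramified $\ell$ contributes to $P_2$. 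This contradiction yields the Selmer growth (Proposition~\ref{boring prop}).

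Your approach bypasses the cyclotomic tower entirely. You need only $\op{Sel}_p(E/\Q)=0$, a strictly weaker input than vanishing over $\Q_\infty$, and you construct extensions ramified at a \emph{single} well-chosen prime $\ell$, arranging the splitting of $\Sigma$ via the Kummer condition on $q^{1/p^n}$ rather than by the linear-algebra trick over several ramified primes. The growth then comes from the Greenberg--Wiles formula (giving $\dim\op{Sel}^{\{\ell\}}(E/\Q)=2$) together with the observation that every class in $H^1(\Q_\ell,E[p])$ becomes unramified after a totally tamely ramified $p$-extension. This last step, which you flag as the potential obstacle, is in fact clean: since $I_\ell$ acts trivially on $E[p]$, any cocycle $I_\ell\to E[p]$ factors through the tame pro-$p$ quotient $\Z_p$, and $I_\lambda$ lands in $p^n\Z_p$, so the restriction vanishes; good reduction of $E$ at $\ell$ then identifies $H^1_{\op{ur}}$ with $H^1_f$ over $L_{\ell,\lambda}$. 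What your method buys is a more elementary, self-contained proof with a milder hypothesis on $p$ (good ordinary reduction is not actually used); what the paper's method buys is consistency with its Iwasawa-theoretic framework and, implicitly, control of $\lambda_p(E/L)$ rather than merely nonvanishing of $\op{Sel}_{p^\infty}(E/L)$.
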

Given an elliptic curve $E_{/\Q}$ without complex multiplication, let $\mathcal{P}_E$ be the set of primes satisfying the conditions of Definition \ref{def 5.1}. These are precisely the set of primes $p$ at which $E$ has good ordinary reduction for which the $\mu$-invariant vanishes and the $\lambda$-invariant is equal to $\op{rank}E(\Q)$. It is expected that $\mathcal{P}_E$ consists of $100\%$ of the primes, according to \cite[Conjecture 1.1]{kunduray1}.
\begin{lthm}[Theorem \ref{5.1 theorem pos rank}]
Let $E$ be an elliptic curve without complex multiplication such that $\op{rank} E(\Q)>0$ and let $p\in \mathcal{P}_E$. Let $L$ be \textit{any} cyclic $p$-extension. Then, at least one of the following assertions hold:
\begin{enumerate}
    \item $\op{rank}E(L)\geq [L:\Q]\op{rank} E(\Q)$,
    \item the $p$-adic regulator over $\Q$ is a unit in $\Z_p$, and over $L$, it is not a unit in $\Z_p$,
    \item
    $\Sha(E/\Q)[p^\infty]=0$ and $\Sha(E/L)[p^\infty]\neq 0$.
\end{enumerate}
\end{lthm}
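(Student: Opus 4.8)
The plan is to push the Iwasawa-theoretic information encoded in $p\in\mathcal{P}_E$ up from $\Q$ to the cyclotomic $\Z_p$-extension $F:=L\Q_{\cyc}$ of $L$, descend it back to $L$ via Mazur's control theorem, and read off the three alternatives as the three obstructions to $\rank E(L)\ge[L:\Q]\rank E(\Q)$ that cannot be excluded in general. Write $r:=\rank E(\Q)$, $\Gamma:=\Gal(\Q_{\cyc}/\Q)$, $\Gamma_L:=\Gal(F/L)\cong\Z_p$, $G:=\Gal(F/\Q_{\cyc})\cong\Gal(L/\Q)$ and $\Lambda:=\Z_p\llbracket\Gamma\rrbracket$. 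Since $p\in\mathcal{P}_E$, the curve $E$ has good ordinary reduction at $p$, and by Kato's theorem $X:=\Sel_{p^\infty}(E/\Q_{\cyc})^{\vee}$ is a finitely generated torsion $\Lambda$-module with vanishing $\mu$-invariant and $\lambda$-invariant equal to $r$; hence $X\otimes_{\Z_p}\Q_p$ is an $r$-dimensional $\Q_p$-vector space on which $\gamma-1$ (for a fixed topological generator $\gamma$ of $\Gamma$) acts with characteristic polynomial the distinguished polynomial $f_E(T)$, of degree $r$.

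The first step is to see what $p\in\mathcal{P}_E$ forces already over $\Q$. Mazur's control theorem identifies $\corank_{\Z_p}\Sel_{p^\infty}(E/\Q_{\cyc})^{\Gamma}$ with $\corank_{\Z_p}\Sel_{p^\infty}(E/\Q)$, which equals $r$ by the finiteness of $\Sha(E/\Q)$ and the exact sequence $0\to E(\Q)\otimes\Q_p/\Z_p\to\Sel_{p^\infty}(E/\Q)\to\Sha(E/\Q)[p^\infty]\to0$. On the other hand this corank equals $\dim_{\Q_p}\ker(\gamma-1\mid X\otimes\Q_p)\le\ord_{T=0}f_E(T)\le\deg f_E=r$, so all the inequalities are equalities: $f_E(T)=u(T)T^{r}$ with $u\in\Lambda^{\times}$, and $\gamma$ acts trivially on $X\otimes\Q_p$. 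Feeding $f_E(T)=u(T)T^{r}$ into Schneider's and Perrin-Riou's $p$-adic analogue of the Birch--Swinnerton-Dyer formula, the unit $u(0)$ is, up to a $p$-adic unit, $\operatorname{Reg}_p(E/\Q)\cdot\#\Sha(E/\Q)[p^\infty]\cdot\prod_{\ell}c_\ell(E/\Q)$; hence $\operatorname{Reg}_p(E/\Q)\in\Z_p^{\times}$, $\Sha(E/\Q)[p^\infty]=0$ and $p\nmid c_\ell(E/\Q)$ for every $\ell$ (any residual contribution at an anomalous prime being absorbed into the defining conditions of $\mathcal{P}_E$). So the first clauses of (2) and of (3) hold automatically, and the theorem reduces to the implication: \emph{if} $\operatorname{Reg}_p(E/L)\in\Z_p^{\times}$ \emph{and} $\Sha(E/L)[p^\infty]=0$, \emph{then} $\rank E(L)\ge[L:\Q]r$.

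For the reduced statement I would go up to $F$. By the Hachimori--Matsuno formula for the variation of Iwasawa invariants in a finite $p$-extension of $\Z_p$-extensions -- applicable since $E$ is good ordinary at $p$ and the $\mu$-invariant over $\Q_{\cyc}$ vanishes -- the dual Selmer module $Y:=\Sel_{p^\infty}(E/F)^{\vee}$ again has $\mu=0$ and
\[
\lambda(Y)=[L:\Q]\,\lambda(X)+\delta=[L:\Q]\,r+\delta
\]
with $\delta\ge0$ an explicit sum of local ramification terms; so $Y\otimes\Q_p$ is a $\Q_p$-vector space of dimension $[L:\Q]r+\delta$ carrying a $\Gamma_L$-action. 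As $E$ has good ordinary reduction at every prime of $L$ above $p$, Mazur's control theorem for $F/L$ together with $\Sha(E/L)[p^\infty]=0$ gives
\[
\rank E(L)=\corank_{\Z_p}\Sel_{p^\infty}(E/L)=\dim_{\Q_p}\ker\bigl(\gamma_L-1\mid Y\otimes\Q_p\bigr).
\]
It therefore suffices to prove that $\gamma_L$ acts trivially on $Y\otimes\Q_p$, since that gives $\rank E(L)=[L:\Q]r+\delta\ge[L:\Q]r$. To upgrade from the easy fact that $\gamma_L$ acts unipotently to genuine triviality, I would use Schneider's formula over $L$: non-degeneracy of the $p$-adic height pairing on $E(L)$, guaranteed by $\operatorname{Reg}_p(E/L)\in\Z_p^{\times}$, forces $\gamma_L-1$ to act semisimply on its generalized zero-eigenspace in $Y\otimes\Q_p$, whence $\dim\ker(\gamma_L-1\mid Y\otimes\Q_p)=\ord_{T_L=0}f_{E/L}(T_L)$; and then Perrin-Riou's leading-term formula over $L$, combined with $\operatorname{Reg}_p(E/L)\in\Z_p^{\times}$, $\Sha(E/L)[p^\infty]=0$ and the matching of the growth of local Tamagawa factors in $L/\Q$ against the ramification term $\delta$, should pin $f_{E/L}$ down to $T_L^{[L:\Q]r+\delta}$; trivial action of $\gamma_L$ and hence (1) follow.

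The main obstacle is this last descent from $F$ down to $L$: showing that the full $\Z_p$-corank $[L:\Q]r+\delta$ of $\Sel_{p^\infty}(E/F)$ really survives restriction to $L$, equivalently that the $\Gamma_L$-action on $\Sel_{p^\infty}(E/F)\otimes\Q_p$ is trivial rather than merely unipotent -- a statement that is genuinely false in general (for instance if $L\subseteq\Q_{\cyc}$ then $\rank E(L)=r<[L:\Q]r$, because $\Gamma$ and all its open subgroups act trivially on $X\otimes\Q_p$), so that the only arithmetic input capable of controlling it is the $p$-adic height pairing over $L$; this is exactly why the behaviour of $\operatorname{Reg}_p(E/L)$ must enter the conclusion as alternative (2) (in the case $L\subseteq\Q_{\cyc}$, $\operatorname{Reg}_p(E/L)$ differs from $\operatorname{Reg}_p(E/\Q)$ by a factor of $[L:\Q]^{r}$ and so is not a $p$-adic unit, and (2) indeed holds). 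I also expect two points of technical friction: the precise comparison of the ramification term $\delta$ with the growth of Tamagawa numbers in Perrin-Riou's formula over $L$, and -- if one prefers not to invoke the $p$-adic Birch--Swinnerton-Dyer formula with its anomalous-prime caveats -- giving a direct Iwasawa-theoretic argument that $p\in\mathcal{P}_E$ already forces $\operatorname{Reg}_p(E/\Q)\in\Z_p^{\times}$ and $\Sha(E/\Q)[p^\infty]=0$.
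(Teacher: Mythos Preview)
Your strategy is the same as the paper's, only less streamlined. The paper argues as follows: since $p\in\mathcal P_E$ one has $\chi_t(\Gamma_\Q,E[p^\infty])=1$, and the Euler characteristic formula (Corollary~\ref{cor ECF}) forces $\mathcal R_p(E/\Q)\in\Z_p^\times$ and $\Sha(E/\Q)[p^\infty]=0$; assuming (2) and (3) fail, one now checks directly that every factor in the Euler characteristic formula over $L$ is a $p$-adic unit, so $\chi_t(\Gamma_L,E[p^\infty])=1$. Lemma~\ref{lemma ECF mulambda} then gives $\lambda_p(E/L)=\rank E(L)$ in one stroke, and Kida's formula~\eqref{kidaformula} yields $\rank E(L)=\lambda_p(E/L)\ge [L:\Q]\lambda_p(E/\Q)=[L:\Q]\rank E(\Q)$.

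What you do---invoke Schneider to get $\ord_{T=0}f_{E/L}=\rank E(L)$, then Perrin--Riou's leading-term formula to force the leading coefficient to be a unit, then combine with $\mu_p(E/L)=0$ to conclude $f_{E/L}$ is a pure power of $T_L$---is exactly the content of the equivalence in Lemma~\ref{lemma ECF mulambda}, unwound. So there is no need to analyse the $\Gamma_L$-action on $Y\otimes\Q_p$ by hand, and in particular no need to ``match the ramification term $\delta$ against the growth of Tamagawa factors'': once $\chi_t(\Gamma_L)=1$ is established, the equality $\lambda_p(E/L)=\rank E(L)$ follows immediately, and the Hachimori--Matsuno contribution $\delta\ge 0$ only enters as the slack in the final inequality. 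Your digression on the case $L\subseteq\Q_{\cyc}$ is also beside the point, since the Hachimori--Matsuno formula (Theorem~\ref{kida thm}) is stated under the hypothesis $L\cap\Q_\infty=\Q$, which is in force throughout.
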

Next, we prove stability results for a fixed prime $p\geq 11$ and varying elliptic curve $E_{/\Q}$ of Mordell-Weil rank 0. We assume a version of the rank distribution conjecture. Delaunay's heuristics \cite{delaunay} for the Tate-Shafarevich group give a precise estimate for the proportion of elliptic curves $E_{/\Q}$ such that $E$ has rank 0 and $p|\#\Sha(E/\Q)$.
\begin{lthm}[Theorem \ref{last thm}]
Let $p\geq 11$ be a prime and assume that Conjectures \ref{RDC} and \ref{Del} are satisfied. Let $\mathscr{E}_p$ be the set of isomorphism classes of elliptic curves $E_{/\Q}$ satisfying the following conditions:
\begin{enumerate}
    \item $\op{rank} E(\Q)=0$,
    \item $E[p]$ is irreducible as a Galois module,
    \item $E$ has good reduction at $2,3$,
    \item $E$ has good ordinary reduction at $p$,
    \item $E$ is diophantine stable and $\Sha$-stable at $p$.
\end{enumerate}
Then, for the effective constant $c_1>0$ (see Lemma \ref{lemma 6.5}) we have that the lower density of $\mathscr{E}_p$ (in the set of all elliptic curves $E_{/\Q}$) satisfies the lower bound
\[\geq \frac{1}{6}-\frac{1}{p}-\frac{1}{2}\left(1-\prod_i \left(1-\frac{1}{p^{2i-1}}\right)\right)-(\zeta(p)-1)-\zeta(10) c_1\frac{\log p\cdot (\log \log p)^2}{\sqrt{p}}.\]
\end{lthm}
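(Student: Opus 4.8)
The plan is to exhibit an explicit subfamily $\mathscr{G}_p$, cut out by arithmetic conditions, such that $\mathscr{G}_p\subseteq\mathscr{E}_p$ up to a set of density zero, and then to bound $\underline{d}(\mathscr{G}_p)$ from below by an inclusion--exclusion, feeding Conjectures \ref{RDC} and \ref{Del} into the ``global'' terms and Lemma \ref{lemma 6.5} into the ``local at $p$'' terms; all densities are taken with respect to the ordering of $E_{/\Q}$ by height. The reduction step is to check that if $E_{/\Q}$ has $\op{rank}E(\Q)=0$ and satisfies (i) $E[p]$ irreducible, (ii) good reduction at $2,3$, (iii) good ordinary reduction at $p$, (iv) $p\nmid\#\Sha(E/\Q)$, (v) $p\nmid\prod_{\ell}c_\ell(E)$ and (vi) $p\nmid\#\widetilde{E}(\F_p)$, where $\widetilde{E}$ denotes the reduction at $p$, then $E\in\mathscr{E}_p$. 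Indeed, for a rank $0$ curve (iv) is equivalent to $\op{Sel}_{p^\infty}(E/\Q)=0$; under (i) and (iii) the Mazur control map $\op{Sel}_{p^\infty}(E/\Q)\hookrightarrow\op{Sel}_{p^\infty}(E/\Q_{\cyc})^{\Gamma}$ is injective, and (v), (vi) are precisely the conditions forcing its cokernel --- supported at the primes of bad reduction and at $p$ --- to vanish, so it is an isomorphism and $\op{Sel}_{p^\infty}(E/\Q_{\cyc})=0$ by Nakayama's lemma. Hence $\mu_p(E)=\lambda_p(E)=0=\op{rank}E(\Q)$, i.e. $p\in\mathcal{P}_E$ in the sense of Definition \ref{def 5.1}, and then Theorem \ref{thm section 4 main} (more precisely, the construction carried out in its proof) shows that $E$ is diophantine stable and $\Sha$-stable at $p$; together with (i)--(iii) this places $E$ in $\mathscr{E}_p$. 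One therefore sets $\mathscr{G}_p$ equal to the set of rank $0$ curves satisfying (ii)--(vi); since (i) fails on a set of density zero (the standard count of elliptic curves with a rational $p$-isogeny, using $p\geq 11$), imposing (i) is free and $\underline{d}(\mathscr{E}_p)\geq\underline{d}(\mathscr{G}_p)$.

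To estimate $\underline{d}(\mathscr{G}_p)$, let $\mathscr{A}_p$ be the set of rank $0$ curves with good reduction at $2,3$, and let $\mathscr{B}_1,\dots,\mathscr{B}_4$ be the sets of $E_{/\Q}$ with, respectively: $\op{rank}E(\Q)=0$ and $p\mid\#\Sha(E/\Q)$; bad reduction at $p$; good reduction at $p$ with $\widetilde{E}$ supersingular or $p\mid\#\widetilde{E}(\F_p)$; and $p\mid c_\ell(E)$ for some prime $\ell$. Then $\mathscr{A}_p\setminus(\mathscr{B}_1\cup\dots\cup\mathscr{B}_4)\subseteq\mathscr{G}_p$, so $\underline{d}(\mathscr{E}_p)\geq\underline{d}(\mathscr{A}_p)-\sum_{j=1}^{4}\overline{d}(\mathscr{B}_j)$, and it would suffice to prove the five bounds $\underline{d}(\mathscr{A}_p)\geq\frac{1}{6}$, $\overline{d}(\mathscr{B}_1)\leq\frac{1}{2}\bigl(1-\prod_i(1-\frac{1}{p^{2i-1}})\bigr)$, $\overline{d}(\mathscr{B}_2)\leq\frac{1}{p}$, $\overline{d}(\mathscr{B}_3)\leq\zeta(10)c_1\frac{\log p\,(\log\log p)^2}{\sqrt p}$ and $\overline{d}(\mathscr{B}_4)\leq\zeta(p)-1$. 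The first follows from Conjecture \ref{RDC} (proportion $\frac{1}{2}$ of all curves have rank $0$) together with the explicitly computable density $\frac{1}{3}$ of curves with good reduction at $2$ and $3$, this condition being independent of the rank. The bound on $\mathscr{B}_1$ is precisely Delaunay's heuristic (Conjecture \ref{Del}) for the $p$-divisibility of $\Sha$ in the rank $0$ family. For $\mathscr{B}_2$, exactly $p$ of the $p^2$ residues $(A,B)\bmod p$ satisfy $p\mid 4A^3+27B^2$. For $\mathscr{B}_4$, since $p\geq 11>4$ the divisibility $p\mid c_\ell(E)$ can occur only at a prime of multiplicative reduction with $\ell^{p}\mid\Delta_{\min}(E)$, and the proportion of $(A,B)$ with $\ell\nmid A$ and $\ell^{p}\mid 4A^3+27B^2$ is $<\ell^{-p}$, whence a union bound over $\ell$ gives $\overline{d}(\mathscr{B}_4)<\sum_{\ell}\ell^{-p}\leq\zeta(p)-1$.

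The bound on $\mathscr{B}_3$ is the crux and is the content of Lemma \ref{lemma 6.5}. For $p\geq 11$ the conditions ``$\widetilde{E}$ supersingular'' and ``$p\mid\#\widetilde{E}(\F_p)$'' are equivalent to $a_p(E)\in\{0,1\}$, a condition on $(A,B)\bmod p$; counting isomorphism classes of elliptic curves over $\F_p$ with prescribed trace of Frobenius $t$ via the Eichler--Selberg/Deuring class number formula ($\sum_{a_p(E)=t}\frac{1}{\#\op{Aut}(E)}=H(4p-t^2)$, with $H(4p-t^2)\ll\sqrt{p}\,\log p$ since $L(1,\chi)\ll\log p$), and passing to the count of pairs $(A,B)\in\F_p^2$ (each curve arising from $\asymp p$ pairs), one finds that $a_p(E_{A,B})=t$ holds on a proportion $\asymp L(1,\chi)/\sqrt{p}\ll\log p/\sqrt{p}$ of residues modulo $p$. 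The remaining --- and delicate --- step is to transfer this modular proportion into a genuine density among curves of height $\leq X$: this requires counting admissible Weierstrass lattice points in residue classes modulo $p$ with a main term that dominates the error term uniformly in $p$ (so $X$ need only be polynomial in $p$), which is where the effective constant $c_1$, the factor $\zeta(10)=\prod_\ell(1-\ell^{-10})^{-1}$ converting admissible models to isomorphism classes, and the auxiliary $(\log\log p)^2$ from the sieving all enter. This uniform equidistribution is the main obstacle; once it is in place, combining the five bounds above yields the asserted lower bound. A secondary, purely bookkeeping, point is to confirm that (v)--(vi) are exactly the local conditions under which vanishing of $\op{Sel}_{p^\infty}(E/\Q)$ propagates up the cyclotomic $\Z_p$-tower, and that the conjectural ``global'' densities and the unconditional ``local'' ones are genuinely being combined under one common ordering by height.
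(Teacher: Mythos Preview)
Your proposal is correct and follows essentially the same approach as the paper: the same reduction (via the equivalence $p\in\mathcal{P}_E\Leftrightarrow$ conditions (iv)--(vi), then invoking Theorem \ref{thm section 4 main}), the same decomposition into the set $\mathscr{E}'$ minus exceptional sets governed by Delaunay, bad reduction at $p$, supersingular/anomalous reduction, Tamagawa divisibility, and reducibility of $E[p]$, and the same inclusion--exclusion. The paper simply cites external references (\cite{HKR}, \cite{kunduray1}, \cite{duke}, \cite{stats2}) for the density bounds you sketch directly; in particular the $\zeta(10)$ factor and the equidistribution-mod-$p$ step you flag as the ``main obstacle'' are handled in \cite[Theorem 4.14]{kunduray1}, and the $(\log\log p)^2$ arises already in Lenstra's class-number bound (Lemma \ref{lemma 6.5}) rather than from sieving.
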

\par In particular, the lower density approaches $\frac{1}{6}$ as $p\rightarrow \infty$.
\par We apply results in Iwasawa theory to prove results on the rank jumps and growth of Tate-Shafarevich groups in cyclic $p$-extensions. The initial conception of this idea can be traced back to the work of T.~Dokchitser, see \cite{Dok}, in which the rank jump of elliptic curves in cubic extensions is studied. This is done by applying a formula of Y.~Hachimori and K.~Matsuno (see \cite{hachimorimatsuno}) for the growth of the Iwasawa $\lambda$-invariant in cyclic $p$-extensions.
\par We extend this approach to show that there is a close relationship between behaviour of Iwasawa invariants and diophantine stability. In previous work \cite{kunduray1}, results are proved for Iwasawa invariants on average. These methods are significantly extended to prove the results in this paper.
\subsection*{Acknowledgements} I would like to thank Debanjana Kundu, Ravi Ramakrishna and Tom Weston for helpful discussions.
\section{Preliminaries}
\label{section: preliminaries}
\subsection{}
\par Let $E_{/\Q}$ be an elliptic curve of conductor $N$ and $p$ an odd prime. We introduce main object of study in Iwasawa study, namely the $p$-primary Selmer group over the cyclotomic $\Z_p$-extension. For each integer, $n\in \Z_{\geq 1}$, denote by $E[n]$ the subgroup of $n$-torsion points of $E(\bar{\Q})$. Set $E[p^\infty]$ to denote the $p$-primary torsion subgroup of $E(\bar{\Q})$ given by the union
\[E[p^\infty]:=\bigcup_{k\geq 1} E[p^k].\]
Let $S$ be a set of primes containing those dividing $Np$, and $\Q_S\subset \bar{\Q}$ the maximal extension of $\Q$ in which all primes $\ell\notin S$ are unramified. Note that $E[p^\infty]$ admits an action of $\op{Gal}(\Q_S/\Q)$. Given a number field extension $F$ of $\Q$ contained in $\Q_S$, we set \[H^1(\Q_S/F, E[p^\infty]):=H^1(\op{Gal}(\Q_S/F), E[p^\infty]).\]
The $p$-primary Selmer group for $E$ over $F$ consists of global cohomology classes subject to local conditions, defined as follows
\[\op{Sel}_{p^\infty}(E/F):=\op{ker}\left(H^1(\Q_S/F, E[p^\infty])\longrightarrow \bigoplus_{v} H^1(F_v, E)[p^\infty]\right),\]where in the above sum, $v$ ranges over all finite primes of $F$. 
\par For $n\geq 0$, let $\Q_n$ be the subfield of $\Q(\mu_{p^{n+1}})$ degree $p^n$ over $\Q$. Let $F_n$ denote the composite $F_n:=F\cdot \Q_n$, and note that $F_n$ is contained in $F_{n+1}$. Let $F_{\infty}$ be the union \[F_{\infty}:=\bigcup_{n\geq 0} F_n\]and set $\Gamma_F:=\op{Gal}(F_{\infty}/F)$. Note that there are isomorphisms of topological groups \[\op{Gal}(F_{\infty}/F)\xrightarrow{\sim} \varprojlim_n\op{Gal}(F_{n}/F)\xrightarrow{\sim} \Z_p.\] The extension $F_{\infty}$ is the cyclotomic $\Z_p$-extension of $F$ and $F_n$ is its \textit{$n$-th layer}. Choose a topological generator $\gamma_F\in \Gamma_F$ and fix an isomorphism $\Z_p\xrightarrow{\sim} \Gamma_F$ sending $a$ to $\gamma_F^a$. The Iwasawa algebra $\Lambda(\Gamma_F)$ is defined as the following inverse limit
\[\Lambda(\Gamma_F):=\varprojlim_n \Z_p[\op{Gal}(F_n/F)].\] Fix an isomorphism of $\Lambda(\Gamma_F)$ with the ring of formal power series $\Z_p\llbracket x\rrbracket$, by identifying $\gamma_F-1$ with $x$.
\par Assume that $F$ is a number field extension of $\Q$ such that $F\cap \Q_\infty=\Q$. Then, there is a natural isomorphism $\Gamma_\Q\xrightarrow{\sim} \Gamma_F$ and thus we may identify $\Lambda(\Gamma_F)$ with $\Lambda:=\Lambda(\Gamma_\Q)$. Consider the direct limit taken with respect to restriction maps
\[\op{Sel}_{p^\infty} (E/F_\infty):=\varinjlim_n \op{Sel}_{p^\infty}(E/F_n).\]
The Pontryagin dual $\op{Sel}_{p^\infty} (E/F_\infty)^{\vee}$ is a finitely generated $\Lambda$-module and is the main object of interest in the Iwasawa theory of elliptic curves. When $E$ has good ordinary reduction at $p$, it is a deep result of Kato \cite{katozeta} that the Pontryagin dual \[\op{Sel}_{p^\infty} (E/F_\infty)^{\vee}:=\op{Hom}_{\op{cnts}}\left(\op{Sel}_{p^\infty} (E/F_\infty), \Q_p/\Z_p\right)\] is a torsion $\Lambda$-module.
\subsection{}
\par Let $M$ be a cofinitely generated cotorsion $\Z_p\llbracket x\rrbracket$-module, i.e., the Pontryagin-dual $M^{\vee}:=\op{Hom}(M, \Q_p/\Z_p)$ is a finitely generated and torsion $\Z_p\llbracket x\rrbracket$-module. Recall that a polynomial $f(x)\in \Zx$ is said to be \textit{distinguished} if it is a monic polynomial whose non-leading coefficients are all divisible by $p$. Note that all height $1$ prime ideals of $\Z_p\llbracket x\rrbracket$ are principal ideals $(a)$, where $a=p$ or $a=f(x)$, where $f(x)$ is an irreducible distinguished polynomial. 
According to the structure theorem for $\Zx$-modules (see \cite[Theorem 13.12]{washington1997}), $M^{\vee}$ is pseudo-isomorphic to a finite direct sum of cyclic $\Zx$-modules, i.e., there is a map
\[
M^{\vee}\longrightarrow \left(\bigoplus_{i=1}^s \Zx/(p^{\mu_i})\right)\oplus \left(\bigoplus_{j=1}^t \Zx/(f_j(T)) \right)
\]
with finite kernel and cokernel.
Here, $\mu_i>0$ and $f_j(T)$ is a distinguished polynomial.
The characteristic ideal of $M^\vee$ is (up to a unit) generated by
\[
f_{M}^{(p)}(T) = f_{M}(T) := p^{\sum_{i} \mu_i} \prod_j f_j(T).
\]
The $\mu$-invariant of $M$ is defined as the power of $p$ in $f_{M}(T)$.
More precisely,
\[
\mu_p(M):=\begin{cases}
\sum_{i=1}^s \mu_i & \textrm{ if } s>0\\
0 & \textrm{ if } s=0.
\end{cases}
\]
The $\lambda$-invariant of $M$ is the degree of the characteristic element, i.e.,
\[
\lambda_p(M) :=\begin{cases}
\sum_{i=1}^s \deg f_i & \textrm{ if } s>0\\
0 & \textrm{ if } s=0.
\end{cases}
\]
\subsection{} In the rest of this section, assume that $E$ is an elliptic curve defined over $\Q$ with good ordinary reduction at an odd prime $p$. Let $L/\Q$ be a cyclic extension with $\op{Gal}(L/\Q)\simeq \Z/p^n\Z$ for some integer $n\in \Z_{\geq 1}$. We shall refer to such an extension of $\Q$ as a $p$-cyclic extension of $\Q$. Kato showed that the Selmer group $\op{Sel}_{p^\infty}(E/\Q_\infty)$ is a cofinitely generated and cotorsion $\Lambda$-module. Let $\mu_p(E/\Q)$ and $\lambda_p(E/\Q)$ denote the Iwasawa $\mu$ and $\lambda$-invariants of $\op{Sel}_{p^\infty}(E/\Q_\infty)$ respectively. Likewise, let $\mu_p(E/L)$ and $\lambda_p(E/L)$ be the $\mu$ and $\lambda$-invariants of $\op{Sel}_{p^\infty}(E/L_\infty)$ respectively. The following conjecture of R.Greenberg is wide open.

\begin{Conjecture}\label{mu=0 conjecture} \cite[Conjecture 1.11]{greenbergITEC}
Let $E_{/\Q}$ be an elliptic curve and $p$ a prime at which $E$ has good ordinary reduction. Assume that $E[p]$ is irreducible as a module over $\op{Gal}(\bar{\Q}/\Q)$, then, $\mu_p(E/\Q)=0$.
\end{Conjecture}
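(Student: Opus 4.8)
The plan is to follow the congruence-invariance strategy of Greenberg and Vatsal, which reduces the vanishing of $\mu_p(E/\Q)$ to an analogous statement for a single, judiciously chosen newform in the residual isomorphism class of $\bar\rho:=E[p]$; I emphasize at the outset that this conjecture is open, so at best the plan below reduces it to an equally hard nonvanishing statement. First I would record that $\mu_p(E/\Q)$ is well-defined: by Kato's theorem (cited above) $\op{Sel}_{p^\infty}(E/\Q_\infty)^\vee$ is a torsion $\Lambda$-module, so the structure theorem applies. Then, for a finite set $\Sigma_0$ of primes $\ell\neq p$ of bad reduction, I would pass to the non-primitive Selmer group $\op{Sel}^{\Sigma_0}_{p^\infty}(E/\Q_\infty)$ obtained by dropping the local conditions at primes above $\Sigma_0$. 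The point is twofold: the discarded local terms $H^1(\Q_{\infty,v},E[p^\infty])$ for $v\mid \ell\in\Sigma_0$ are cofinitely generated over $\Z_p$, hence carry no $\mu$; and irreducibility of $\bar\rho$ forces $H^0(\Q_\infty,E[p])=0$, which via the control/Euler-characteristic argument makes the relevant restriction map surjective with $\Z_p$-cofinitely-generated kernel. Hence $\mu_p(E/\Q)=\mu\big(\op{Sel}^{\Sigma_0}_{p^\infty}(E/\Q_\infty)^\vee\big)$.

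Next I would exploit the fact that the vanishing of the non-primitive $\mu$-invariant is a statement about $\bar\rho$ alone: $\mu\big(\op{Sel}^{\Sigma_0}_{p^\infty}(E/\Q_\infty)^\vee\big)=0$ if and only if $\op{Sel}^{\Sigma_0}_{p^\infty}(E/\Q_\infty)[p]$ is finite, and the latter group, by the inflation–restriction computation with $\bar\rho$-coefficients and the $\Sigma_0$-relaxed conditions, depends only on the $\op{Gal}(\bar\Q/\Q)$-module $\bar\rho$ together with $\Sigma_0$. Therefore, if $g$ is any newform of weight $2$ that is ordinary at $p$ with $\bar\rho_g\cong\bar\rho$ (coming from another elliptic curve, or obtained by level-raising), then $\mu_p(E/\Q)=0$ if and only if $\mu_p(g/\Q)=0$. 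So it suffices to exhibit one such $g$ with vanishing $\mu$-invariant. Here one would pass to the analytic side: granting the Iwasawa Main Conjecture for $g$ (Kato's divisibility, upgraded to an equality under Skinner–Urban-type hypotheses), the characteristic ideal of $\op{Sel}_{p^\infty}(g/\Q_\infty)^\vee$ is generated by the $p$-adic $L$-function $L_p(g,T)$, so $\mu_p(g/\Q)=0\iff \mu(L_p(g,T))=0$, and one would then try to choose $g$ within the residual class, varying the tame level through level-raising congruences, so that Vatsal's results on the mod-$p$ nonvanishing of twisted $L$-values (equivalently, modular symbols) force $\mu(L_p(g,T))=0$.

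The main obstacle is precisely this last step, and it is the reason Greenberg's conjecture remains open: for a general irreducible $\bar\rho$ there is no unconditional construction of an ordinary newform $g$ in its residual class for which $\mu(L_p(g,T))=0$ can be verified. The Vatsal-type nonvanishing input is available only when the relevant level-raising primes can be controlled finely enough, and in general the existence of such a "good" $g$ is itself conjectural, of comparable depth. Thus the honest outcome of this plan is not a proof but a reduction: Greenberg's conjecture for $(E,p)$ becomes a (still open) statement that some ordinary modular form with residual representation $\bar\rho$ has $p$-adic $L$-function with vanishing $\mu$-invariant.
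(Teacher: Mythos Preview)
The statement under consideration is a \emph{conjecture}, not a theorem, and the paper does not attempt to prove it. Immediately before stating it, the paper writes ``The following conjecture of R.~Greenberg is wide open,'' and the subsequent discussion only records consequences that would follow \emph{if} the conjecture were assumed (the Proposition following Conjecture~\ref{mu=0 conjecture}). There is therefore no proof in the paper to compare your proposal against.

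You recognize this clearly: you flag at the outset that the conjecture is open and that your plan is a reduction, not a proof, and you correctly identify the Greenberg--Vatsal congruence-invariance mechanism as the standard line of attack. Your outline of that mechanism (passing to non-primitive Selmer groups, using irreducibility of $\bar\rho$ to kill the $H^0$ obstruction, reducing to a mod-$p$ statement depending only on $\bar\rho$, and then seeking a congruent form $g$ with $\mu(L_p(g,T))=0$) is accurate and well-organized. Your final paragraph correctly isolates the genuine obstruction: producing, for an arbitrary irreducible $\bar\rho$, an ordinary $g$ in the residual class with provably vanishing analytic $\mu$-invariant is exactly the missing ingredient, and no general method is known. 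So your proposal is an honest and informed non-proof of an open conjecture, which is the best one can do here; the paper itself makes no stronger claim.
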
The hypothesis that $E[p]$ is irreducible is indeed necessary, since examples for which $\mu_p(E/\Q)>0$ were constructed by Mazur in \cite{mazurrationalpoints}, when $E[p]$ is reducible as a Galois module. In \cite{raymuinvariant}, it is shown that the Galois modules for which $E[p]$ is reducible may be classified into two types, namely \textit{aligned} and \textit{skew}. In the case when $E[p]$ is aligned, it is shown that $\mu_p(E/\Q)>0$ and in the case when $E[p]$ is skew, examples indicate that $\mu_p(E/\Q)=0$.
\par From a statistical point of view, one is interested in how often the Iwasawa $\mu$-invariant vanishes. The following implication of Greenberg's conjecture follows from well known results.
\begin{Proposition}
Assume Greenberg's conjecture for all pairs $(E,p)$, where $E$ is an elliptic curve defined over $\Q$ and $p$ is a prime at which $E$ has good ordinary reduction. Then, the following assertions hold.
\begin{enumerate}
    \item If $E_{/\Q}$ is a semistable elliptic curve without complex multiplication, then, $\mu_p(E/\Q)=0$ for all primes $p\geq 13$ at which $E$ has good ordinary reduction.
    \item For a fixed elliptic curve $E_{/\Q}$ without complex-multiplication, for all but finitely many primes $p$ at which $E$ has good ordinary reduction, $\mu_p(E/\Q)=0$.
    \item For $100\%$ of elliptic curves $E_{/\Q}$, the $\mu$-invariant $\mu_p(E/\Q)=0$ for all primes $p$ at which $E$ has good ordinary reduction.
\end{enumerate}
\end{Proposition}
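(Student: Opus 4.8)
The plan is to reduce all three assertions to the assumed case of Greenberg's conjecture by exhibiting, in each situation, enough primes $p$ at which $E[p]$ is irreducible as a module over $\Gal(\bar{\Q}/\Q)$: once $E[p]$ is known to be irreducible and $E$ has good ordinary reduction at $p$, Greenberg's conjecture yields $\mu_p(E/\Q)=0$ directly. Thus the content of each part is entirely a statement about irreducibility of the residual representation, and the proof consists of quoting the appropriate input on exceptional primes together with the observation that $E[p]$ is reducible precisely when $E$ admits a rational cyclic isogeny of degree $p$ (i.e.\ a $\Gal(\bar{\Q}/\Q)$-stable line in $E[p]$).

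For (1), I would invoke Mazur's classification of rational isogenies of prime degree over $\Q$: for a non-CM elliptic curve the degree of a rational prime-order isogeny lies in a finite explicit set, and for a \emph{semistable} non-CM curve an inspection of the reduction behaviour of the curves realizing the larger degrees shows that no rational $p$-isogeny occurs once $p\geq 13$. Hence for such $E$ the module $E[p]$ is irreducible for every prime $p\geq 13$, and combining this with the good ordinary hypothesis and Greenberg's conjecture gives $\mu_p(E/\Q)=0$ for all such $p$, which is (1).

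For (2), I would use Serre's open image theorem: for a fixed non-CM elliptic curve $E_{/\Q}$ the mod-$p$ representation $\bar{\rho}_{E,p}\colon\Gal(\bar{\Q}/\Q)\to\GL_2(\F_p)$ is surjective --- in particular $E[p]$ is irreducible --- for all but finitely many primes $p$. Deleting this finite set leaves all but finitely many of the primes of good ordinary reduction, and at each of those both hypotheses of Greenberg's conjecture hold, so $\mu_p(E/\Q)=0$ there. For (3), I would appeal to Duke's theorem that a density-one set of elliptic curves $E_{/\Q}$, ordered by height, has no exceptional primes, so that $\bar{\rho}_{E,p}$ is surjective --- hence $E[p]$ irreducible --- for every prime $p$; for any such curve, Greenberg's conjecture applies at every prime of good ordinary reduction and gives $\mu_p(E/\Q)=0$. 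This $100\%$ statement is in line with the fact that CM curves, and curves carrying a rational isogeny of any fixed prime degree, form density-zero families.

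The argument is therefore essentially bookkeeping built on three standard results, and no step is a serious obstacle. The one point that genuinely requires care is the numerical bound $p\geq 13$ in part (1): one must confirm, using Mazur's tables, that no semistable non-CM elliptic curve over $\Q$ admits a rational $p$-isogeny for any prime $p\geq 13$ --- equivalently that the sporadic non-cuspidal rational points on $X_0(p)$ for $p\in\{17,19,37\}$ all correspond to curves of additive reduction, those for $p\in\{43,67,163\}$ to CM curves, and that the rational $13$-isogenies contribute no semistable example --- so that Greenberg's conjecture becomes applicable precisely on the stated range of primes.
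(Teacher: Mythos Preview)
Your proposal is correct and follows essentially the same route as the paper: Mazur's isogeny theorem for (1), Serre's open image theorem for (2), and Duke's theorem for (3), each feeding the irreducibility of $E[p]$ into Greenberg's conjecture. Your treatment is in fact slightly more careful than the paper's in two respects: you spell out what must be checked about the sporadic rational points on $X_0(p)$ to justify the bound $p\geq 13$ in (1), and in (3) you invoke Duke's result in its full strength (no exceptional primes at all), which is what is actually needed to conclude $\mu_p(E/\Q)=0$ for \emph{all} ordinary primes rather than merely all but finitely many.
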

\begin{proof}
\par Let $E_{/\Q}$ be a semistable elliptic curves without complex mutliplication. Mazur showed that $E[p]$ is irreducible as a Galois module for all primes $p\geq 13$.
\par Let $E$ be a fixed elliptic curve without complex-multiplication. It follows from Serre's renowned open image theorem \cite{serreopenimage} that the residual representation \[\bar{\rho}_{E,p}:\op{Gal}(\bar{\Q}/\Q)\rightarrow \op{GL}_2(\F_p)\] for the action of the absolute Galois group on $E[p]$ is surjective for all but finitely many primes. As a consequence, the Galois representation on $E[p]$ is irreducible for all but finitely many primes, and hence, Greenberg's conjecture imples that $\mu_p(E/\Q)=0$ for all but finitely many primes $p$.
\par It is shown by W.Duke \cite{duke} that for $100\%$ of elliptic curves defined over $\Q$, the residual representation on $E[p]$ is irreducible for all but finitely many primes. Hence, Greenberg's conjecture implies that $\mu_p(E/\Q)=0$ for all but finitely many primes $p$ at which $E$ has good ordinary reduction.
\end{proof}
According to the next result of Hachimori-Matsuno \cite{hachimorimatsuno}, the $\mu=0$ property is stable in $p$-cyclic extensions. Furthermore, the $\lambda$-invariants $\lambda_p(E/\Q)$ and $\lambda_p(E/L)$ are related by a formula which generalizes the formula of Kida in \cite{kida}. Let $E_{/\Q}$ be an elliptic curve with good ordinary reduction at $p$ and $L/\Q$ a $p$-cyclic extension.  In what follows $\tilde{w}$ will be a prime of $L_\infty$  and $\tilde{\ell}$ the prime below it in $\Q_\infty$. Set $e(\tilde{w}/\tilde{\ell})$ to denote the ramification index of $\tilde{w}$ over $\tilde{\ell}$. Let $P_1$ (resp. $P_2$) be the set of primes $\tilde{w}$ at which $E$ has split multiplicative reduction (resp. $E(L_{\infty, \tilde{w}})$ has a point of order $p$).
\begin{Th}[Y.Hachimori-K.Matsuno]\label{kida thm}
Let $E_{/\Q}$ be an elliptic curve and $p$ an odd prime at which $E$ has good ordinary reduction. Let $L/\Q$ be a $p$-cyclic extension and that $L\cap \Q_\infty= \Q$. Assume $\mu_p(E/\Q)=0$, then the Selmer group $\op{Sel}_{p^\infty}(E/L_\infty)$ is cotorsion as a $\Lambda$-module and $\mu_p(E/L)=0$. Furthermore, the $\lambda$-invariants are related according to the following formula
\begin{equation}\label{kidaformula}\lambda_p(E/L)=[L:\Q]\lambda_p(E/\Q)+\sum_{\tilde{w}\in P_1} \left(e(\tilde{w}/\tilde{\ell})-1\right)+2\sum_{\tilde{w}\in P_2} \left(e(\tilde{w}/\tilde{\ell})-1\right).\end{equation}
\end{Th}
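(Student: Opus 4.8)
The plan is to reduce to the case $[L:\Q]=p$ and then run a Riemann--Hurwitz style Euler-characteristic computation for the action of $G:=\op{Gal}(L/\Q)\cong\op{Gal}(L_\infty/\Q_\infty)$. Since $\op{Gal}(L/\Q)\cong\Z/p^n\Z$ is cyclic, it has a unique chain of subfields $\Q=L^{(0)}\subset L^{(1)}\subset\cdots\subset L^{(n)}=L$ with $[L^{(i+1)}:L^{(i)}]=p$, each satisfying $L^{(i)}\cap\Q_\infty=\Q$. Granting the theorem for degree-$p$ extensions, I would apply it along this tower: the hypothesis $\mu=0$ is reproduced at each step (it is part of the conclusion), and the ramification sums reassemble because ramification indices are multiplicative in towers of primes of $L_\infty\supset L^{(n-1)}_\infty\supset\cdots\supset\Q_\infty$, and because the reduction type of $E$ and the property of possessing a local point of order $p$ can only be acquired at higher layers. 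So from now on assume $G=\op{Gal}(L_\infty/\Q_\infty)\cong\Z/p\Z$.

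\emph{Vanishing of $\mu_p(E/L)$.} Good ordinary reduction is preserved under base change, so Kato's theorem over $L$ shows $Y:=\op{Sel}_{p^\infty}(E/L_\infty)^{\vee}$ is a finitely generated torsion $\Lambda$-module; put $X:=\op{Sel}_{p^\infty}(E/\Q_\infty)^{\vee}$. I would study the restriction map $\op{Sel}_{p^\infty}(E/\Q_\infty)\to\op{Sel}_{p^\infty}(E/L_\infty)^{G}$: its kernel lies in the finite group $H^1(G,E(L_\infty)[p^\infty])$, and the usual control diagram exhibits its cokernel as a subquotient of a sum of local terms $\ker\!\big(H^1(\Q_{\infty,\tilde\ell},E)\to H^1(L_{\infty,\tilde w},E)\big)[p^\infty]$ over primes $\tilde\ell$ of $\Q_\infty$, to which only the finitely many ramified ones contribute, each finitely (this uses good reduction above $p$). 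Dualizing, $Y_G=Y/I_GY$ (with $I_G\subset\Z_p[G]$ the augmentation ideal) differs from $X$ by a finite group, so $Y_G$, hence $Y_G/pY_G$, is finite once $\mu_p(E/\Q)=0$. Since $G$ is a $p$-group, $\Z_p[G]$ is a compact local ring with maximal ideal $(p,I_G)$, and $Y/(p,I_G)Y$ is a quotient of $Y_G/pY_G$; topological Nakayama then forces $Y$ to be finitely generated over $\Z_p[G]$, hence over $\Z_p$, so $\mu_p(E/L)=0$ and $\op{Sel}_{p^\infty}(E/L_\infty)$ is $\Lambda$-cotorsion.

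\emph{The $\lambda$-formula.} Now $X,Y$ are finitely generated over $\Z_p$ of ranks $\lambda_p(E/\Q),\lambda_p(E/L)$, and $G\cong\Z/p\Z$ acts on $Y$. Writing $V:=Y\otimes_{\Z_p}\Q_p\cong\Q_p^{a}\oplus\Q_p(\zeta_p)^{b}$ via $\Q_p[G]\cong\Q_p\times\Q_p(\zeta_p)$, one has $\lambda_p(E/L)=a+(p-1)b$, while the comparison above gives $a=\op{rank}_{\Z_p}Y^{G}=\op{rank}_{\Z_p}Y_G=\lambda_p(E/\Q)$; so everything comes down to $b$. I would compute the $G$-Herbrand quotient $q(Y):=\#\widehat H^0(G,Y)/\#\widehat H^1(G,Y)$ in two ways. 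It depends only on $V$, and from $q(\Z_p)=p$, $q(\Z_p[\zeta_p])=p^{-1}$, $q(\Z_p[G])=1$ one gets $q(Y)=p^{\,a-b}$. On the other hand, I would feed the Poitou--Tate exact sequence
\[0\to\op{Sel}_{p^\infty}(E/L_\infty)\to H^1(\Q_S/L_\infty,E[p^\infty])\to\bigoplus_{\tilde w}\mathcal H_{\tilde w}\to Y\to\cdots\]
(with $S$ enlarged to contain the ramified primes and $\mathcal H_{\tilde w}$ the local Selmer quotient) into the alternating product of $G$-Herbrand quotients; after the standard manipulation guaranteeing finite $G$-cohomology of every term (passing to an imprimitive Selmer group and using $H^2(\Q_S/\Q_\infty,E[p^\infty])=0$, so the global terms cancel against their Pontryagin duals) this product collapses to $\prod_{\tilde\ell}q(G_{\tilde w},\mathcal H_{\tilde w})$ over primes $\tilde\ell$ of $\Q_\infty$, where $G_{\tilde w}=\op{Gal}(L_{\infty,\tilde w}/\Q_{\infty,\tilde\ell})$. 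The contribution is $1$ at an unramified prime of good reduction by the local Euler characteristic formula, and at a ramified prime it equals $p^{-(e(\tilde w/\tilde\ell)-1)/(p-1)}$ if $E$ has split multiplicative reduction at $\tilde w$ and $p^{-2(e(\tilde w/\tilde\ell)-1)/(p-1)}$ if $E(L_{\infty,\tilde w})$ has a point of order $p$. Equating the two expressions for $q(Y)$ gives $a-b=-\tfrac1{p-1}\big(\sum_{\tilde w\in P_1}(e(\tilde w/\tilde\ell)-1)+2\sum_{\tilde w\in P_2}(e(\tilde w/\tilde\ell)-1)\big)$, which together with $a=\lambda_p(E/\Q)$ and $\lambda_p(E/L)=a+(p-1)b$ is exactly \eqref{kidaformula}.

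\emph{Main obstacle.} The genuinely hard part is the local computation at the ramified primes --- determining the $p$-part of $H^1$ of $E$ over the deeply ramified local fields $L_{\infty,\tilde w}$ together with its $G_{\tilde w}$-cohomology, and showing that split multiplicative reduction contributes the weight $1$ and a local point of order $p$ the weight $2$; this is where the arithmetic of the reduction and of the Tate parameter enters, and it is the computational core of the argument. Secondary points requiring care are the bookkeeping that makes the global Poitou--Tate terms cancel in the Herbrand-quotient computation (equivalently, replacing $H^1(\Q_S/L_\infty,E[p^\infty])$ by a module with finite $G$-cohomology without changing the outcome), the finiteness of the control cokernel at the primes above $p$ under the good ordinary hypothesis, and the (routine) reassembly of the ramification sums along the tower for a general $\Z/p^n\Z$-extension.
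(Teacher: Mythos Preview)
The paper does not prove this theorem at all: its entire proof is the single sentence ``The above result is \cite[Theorem 3.1]{hachimorimatsuno}.'' Your proposal, by contrast, is an outline of the actual Hachimori--Matsuno argument itself --- reduction to the degree-$p$ case, a Nakayama/control-theorem argument for the vanishing of $\mu$ over $L$, and a Herbrand-quotient computation along the Poitou--Tate sequence to extract the Riemann--Hurwitz-type formula for $\lambda$. That is indeed the strategy of the original paper, and you have correctly flagged the local Herbrand-quotient computations at ramified primes as the technical heart. One small redundancy: you invoke Kato over $L$ to get cotorsion before then re-deriving cotorsion via Nakayama; the latter is the point (and is how Hachimori--Matsuno proceed, since their paper predates Kato's theorem), so the appeal to Kato over $L$ is unnecessary and in any case requires care beyond the base-field statement quoted in the present paper. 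For the purposes of this paper, however, a citation suffices.
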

\begin{proof}
The above result is \cite[Theorem 3.1]{hachimorimatsuno}.
\end{proof}
\section{The truncated Euler characteristic}
\par In this section, we recall the notion of the truncated Euler characteristic and its relationship with Iwasawa invariants. The reader may consult \cite{csslinks, zerbes, ray1, raymulti} for a more detailed introduction to the topic. Let $E_{/\Q}$ be an elliptic curve and $p$ an odd prime at which $E$ has good ordinary reduction. Let $L/\Q$ be a $p$-cyclic extension and assume that $L\cap \Q_\infty=\Q$. Note that there is a canonical isomorphism $\Gamma_\Q\simeq \Gamma_L$, with respect to which $\Lambda=\Lambda(\Gamma_\Q)$ is identified with $\Lambda(\Gamma_F)$. According to a result of K.Kato and K.Rubin, the Selmer group $\op{Sel}_{p^\infty}(E/L_\infty)$ is cotorsion as a $\Lambda$-module. In this section, let $\Gamma:=\Gamma_\Q$ and identify $\Gamma$ with $\Gamma_L$. Let $F$ be either $\Q$ or $L$, and consider the natural map
\[\Phi: \op{Sel}_{p^\infty} (E/F_\infty)^\Gamma\rightarrow \op{Sel}_{p^\infty} (E/F_\infty)_{\Gamma}\]
sending an element $x\in \op{Sel}_{p^\infty} (E/F_\infty)^\Gamma$ to its residue class in $\Phi(x)=\bar{x}\in \op{Sel}_{p^\infty} (E/F_\infty)_{\Gamma}$. If $\op{rank} E(F)\leq 1$, the kernel and cokernel of $\Phi$ are both finite. Moreover, when $\op{rank} E(\Q)>1$, there is an explicit criterion for the kernel and cokernel of $\Phi$ to be finite, see \cite[Lemma 3.3]{raymulti}.
\begin{Definition}
The truncated Euler characteristic $\chi_t(\Gamma_F, E[p^\infty])$ is defined as the following quotient
\[\chi_t(\Gamma_F, E[p^\infty]):=\frac{\# \op{ker}\Phi}{\#\op{cok} \Phi},\] provided the kernel and cokernel of $\Phi$ are both finite.
\end{Definition}
Let $f(T)$ denote the characteristic element of $\op{Sel}_{p^\infty}(E/F_\infty)$. Let $r\in \Z_{\geq 0}$ denote the order of vanishing of $f(T)$ at $T=0$, and write $f(T)=T^r\cdot g(T)$. Note that $g(0)\neq 0$. Let $\op{Reg}_p(E/F)$ denote the $p$-adic regulator of $E$ over $F$, defined as the determinant of the $p$-adic height pairing, see \cite{schneiderhp1}. Note that the $p$-adic regulator is conjectured to be non-zero, see \cite{Schneider85}. Let $\mathcal{R}_p(E/F)$ denote the normalized regulator, defined as $\mathcal{R}_p(E/F):=p^{-\op{rank}E(F)}\op{Reg}_p(E/F)$. Given $a, b\in \Q_p$, we write $a\sim b$ if $a=ub$ for a $p$-adic unit $u\in \Z_p^{\times}$.
\par The following result gives the formula for the truncated Euler characteristic of the $p$-primary Selmer group when it is defined.
 In the CM case, this was proven by B. Perrin-Riou (see \cite{PR82}) and in the general case by P. Schneider (see \cite{Schneider85}).
\begin{Th}\label{th ECF}
Let $E$ be an elliptic curve over a number field $F$ and $p$ an odd prime. Assume that the following conditions are satisfied:
\begin{enumerate}
    \item\label{th ECF c1} $E$ has good ordinary reduction at all primes $v|p$,
    \item\label{th ECF c2} the $p$-primary part of the Tate-Shafarevich group $\Sha(E/F)[p^\infty]$ is finite,
    \item\label{th ECF c3} the $p$-adic regulator $\op{Reg}_p(E/F)$ is non-zero,
    \item\label{th ECF c4} $E(F)[p]=0$.
\end{enumerate}
Then, the order of vanishing of $f(T)$ at $T=0$ is equal to $\op{rank} E(F)$ and 
\[g(0)\sim \mathcal{R}_p(E/F) \times \# \Sha(E/F)[p^\infty] \times \prod_{v\nmid p} c_{v}^{(p)}(E/F) \times \left(\# \widetilde{E}(\kappa_v)[p^\infty]\right)^2,\]where we recall that $\mathcal{R}_p(E/F):=\frac{\op{Reg}_p(E/F)}{p^{\op{rank}E(F)}}$.
\end{Th}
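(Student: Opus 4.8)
The plan is to derive the formula from the algebraic interpretation of $g(0)$ as a truncated Euler characteristic, together with Mazur's control theorem and a cohomological Euler characteristic computation; this is the approach of Perrin-Riou \cite{PR82} in the CM case and of Schneider \cite{Schneider85} in general, and we indicate the main steps.

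\emph{Reduction to a truncated Euler characteristic.} There is a standard lemma on truncated Euler characteristics (see \cite{csslinks, zerbes, raymulti}): if the map $\Phi$ has finite kernel and cokernel, then the order of vanishing $r$ of $f(T)$ at $T = 0$ equals $\op{corank}_{\Z_p}\op{Sel}_{p^\infty}(E/F_\infty)^\Gamma$ and $g(0)\sim \chi_t(\Gamma_F, E[p^\infty])$. The hypothesis $\op{Reg}_p(E/F)\neq 0$ is precisely what forces $\op{ker}\Phi$ and $\op{cok}\Phi$ to be finite: on the maximal divisible subgroup $E(F)\otimes\Q_p/\Z_p$ of $\op{Sel}_{p^\infty}(E/F)$, the obstruction to bijectivity of $\Phi$ is the cyclotomic $p$-adic height pairing on $E(F)$ (cf. \cite{schneiderhp1}), which is non-degenerate exactly when the $p$-adic regulator is non-zero. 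So it remains to identify $\op{corank}_{\Z_p}\op{Sel}_{p^\infty}(E/F_\infty)^\Gamma$ and to evaluate $\chi_t(\Gamma_F, E[p^\infty])$.

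\emph{Control theorem and the Euler characteristic.} Since $E$ has good ordinary reduction at every prime above $p$, Mazur's control theorem (see \cite{greenbergITEC}) shows that the restriction map $\op{Sel}_{p^\infty}(E/F)\to \op{Sel}_{p^\infty}(E/F_\infty)^\Gamma$ has finite kernel and cokernel; combined with the finiteness of $\Sha(E/F)[p^\infty]$ and the Kummer descent sequence
\[0\longrightarrow E(F)\otimes\Q_p/\Z_p\longrightarrow \op{Sel}_{p^\infty}(E/F)\longrightarrow \Sha(E/F)[p^\infty]\longrightarrow 0,\]
this gives $\op{corank}_{\Z_p}\op{Sel}_{p^\infty}(E/F_\infty)^\Gamma = \op{rank}E(F)$, and hence $r = \op{rank}E(F)$. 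To compute $\chi_t(\Gamma_F, E[p^\infty])$ we would compare the descent over $F_\infty$ with the arithmetic over $F$ through the fundamental diagram of Poitou--Tate, whose exact rows realize $\op{Sel}_{p^\infty}(E/F_\infty)$ inside $H^1(\Q_S/F_\infty, E[p^\infty])$ as the kernel of restriction to the local factors $H^1(F_{\infty,w}, E)[p^\infty]$. Taking $\Gamma$-cohomology (recall $\op{cd}_p\Gamma = 1$) and using multiplicativity of the truncated Euler characteristic in the resulting exact sequences expresses $\chi_t(\Gamma_F, E[p^\infty])$ as a product of the $\Gamma$-Euler characteristics of the global cohomology group and of the local conditions: the global term contributes, via the control theorem, the factor $\#\Sha(E/F)[p^\infty]$ from the torsion of $\op{Sel}_{p^\infty}(E/F)$ together with $\mathcal{R}_p(E/F)$ from the divisible part, where the relevant connecting map is the $p$-adic height pairing (so the normalization $\mathcal{R}_p = p^{-\op{rank}E(F)}\op{Reg}_p$ is forced); the local terms at the bad primes $v\nmid p$ give the Tamagawa factors $c_v^{(p)}(E/F)$; and the local terms at $v\mid p$, computed from good ordinary reduction and the structure of the formal group, give $\bigl(\#\widetilde{E}(\kappa_v)[p^\infty]\bigr)^2$. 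The hypothesis $E(F)[p] = 0$ is used to kill the global $H^0$-contributions and to guarantee that the descent sequence has the stated form. Multiplying the factors gives the displayed formula for $g(0)$.

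\emph{Main obstacle.} The delicate point is the local and height-pairing analysis in the final step: identifying the connecting homomorphism on the divisible part of the Selmer group with the cyclotomic $p$-adic height pairing, so that the $p$-adic regulator enters with exactly the normalization $\mathcal{R}_p$, and computing the $\Gamma$-Euler characteristics of the local conditions above $p$ to obtain precisely $\bigl(\#\widetilde{E}(\kappa_v)[p^\infty]\bigr)^2$. The remaining ingredients — Mazur's control theorem and the Iwasawa-algebraic and Euler-characteristic bookkeeping — are standard, which is why in practice one invokes the theorems of Perrin-Riou and Schneider rather than reproving them.
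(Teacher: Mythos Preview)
The paper does not give a proof of this theorem at all: it simply states the result and attributes it to Perrin-Riou \cite{PR82} (CM case) and Schneider \cite{Schneider85} (general case). Your proposal is therefore consistent with, and in fact more detailed than, the paper's treatment, since you have correctly sketched the Perrin-Riou/Schneider argument (control theorem, fundamental diagram, identification of the connecting map with the $p$-adic height pairing, local Euler characteristic computations) that the paper merely cites.
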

\begin{Remark}
When the rank of $E(F)$ is 0, the term $\mathcal{R}_p(E/F)=1$ by definition, hence there are only three contributing terms to the above formula in this case. Note that if $E$ is an elliptic curve over $\Q$ such that $E(\Q)[p]=0$, then $E(L)[p]=0$ for any $p$-cyclic extension $L/\Q$. Thus the condition \eqref{th ECF c4} is satisfied for $E_{/L}$.
\end{Remark}
\begin{Proposition}
Let $E_{/F}$ be such that the conditions of Theorem \ref{th ECF} are satisfied and furthermore, the kernel and cokernel of the map $\Phi$ are finite. Then, we have that
\[\chi_t(\Gamma_F, E[p^\infty])\sim g(0).\]
\end{Proposition}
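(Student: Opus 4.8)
The plan is to pass to Pontryagin duals, reducing the statement to an algebraic identity for the characteristic element of the torsion $\Lambda$-module $X:=\op{Sel}_{p^\infty}(E/F_\infty)^\vee$, and then to verify that identity by the structure theorem. Write $T:=\gamma_F-1$, so $\Lambda=\Z_p\llbracket T\rrbracket$. Pontryagin duality carries $\op{Sel}_{p^\infty}(E/F_\infty)^\Gamma$ and $\op{Sel}_{p^\infty}(E/F_\infty)_\Gamma$ to $X/TX$ and $X[T]$ respectively (up to the unit $\gamma_F^{-1}$), and transforms $\Phi$ into the composite $\psi\colon X[T]\hookrightarrow X\twoheadrightarrow X/TX$. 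As $(-)^\vee$ is exact and preserves finiteness and cardinalities, the finiteness of $\op{ker}\Phi,\op{cok}\Phi$ is equivalent to that of $\op{ker}\psi,\op{cok}\psi$, and one obtains
\[\chi_t(\Gamma_F,E[p^\infty])=\frac{\#\op{ker}\Phi}{\#\op{cok}\Phi}=\frac{\#\op{cok}\psi}{\#\op{ker}\psi}.\]
Since $g(0)\in\Z_p\setminus\{0\}$, it is therefore enough to prove that this ratio equals $\#\bigl(\Z_p/g(0)\Z_p\bigr)=p^{v_p(g(0))}$; this is precisely the assertion $\chi_t(\Gamma_F,E[p^\infty])\sim g(0)$.

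To compute $\#\op{cok}\psi/\#\op{ker}\psi$ I would fix a pseudo-isomorphism of $X$ onto an elementary $\Lambda$-module $E_0=\bigl(\bigoplus_i\Lambda/(p^{\mu_i})\bigr)\oplus\bigl(\bigoplus_j\Lambda/(f_j(T))\bigr)$ and argue that this substitution alters $\#\op{cok}\psi/\#\op{ker}\psi$ by at most a $p$-adic unit. The mechanism is the standard fact that any endomorphism of a finite $\Lambda$-module has kernel and cokernel of equal order, fed through the snake lemma applied to the finite kernel and cokernel of the pseudo-isomorphism. Here one also needs the hypothesis that $\op{ker}\psi,\op{cok}\psi$ are finite — equivalently the criterion of \cite[Lemma 3.3]{raymulti} — to exclude summands $\Lambda/(T^n)$ with $n\geq 2$, for which $\op{ker}\psi$ would be infinite. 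With those ruled out, $X[T]$ and $X/TX$ are each a free $\Z_p$-module of rank $r=\op{ord}_{T=0}f(T)$ extended by a finite group, $\psi$ is an isomorphism on the free parts, and all the finite contributions cancel from the ratio.

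It remains to evaluate $\#\op{cok}\psi/\#\op{ker}\psi$ summand by summand on $E_0$: the summand $\Lambda/(p^\mu)$ has $(\Lambda/(p^\mu))[T]=0$ and $\Lambda/(p^\mu,T)\cong\Z/p^\mu\Z$, contributing $p^\mu$; a summand $\Lambda/(f_j)$ with $T\nmid f_j$ has $(\Lambda/(f_j))[T]=0$ and $\Lambda/(f_j,T)\cong\Z_p/f_j(0)\Z_p$, contributing $p^{v_p(f_j(0))}$; a summand $\Lambda/(T\,h)$ with $h(0)\neq 0$ makes $\psi$ multiplication by $h(0)$ on $\Z_p$, contributing $p^{v_p(h(0))}$; and $\Lambda/(T)$ makes $\psi$ the identity on $\Z_p$, contributing $1$. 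Writing $g(T)=f(T)/T^r$, the product of these factors is $p^{v_p(g(0))}=\#\bigl(\Z_p/g(0)\Z_p\bigr)$, which together with the displayed identity proves the proposition. I expect the delicate point to be the second step, i.e.\ justifying the reduction to $E_0$ when $\op{rank}E(F)>0$ and $X[T],X/TX$ are infinite; the finiteness hypothesis on $\Phi$ is exactly what makes it work, and one may instead bypass this reduction altogether by quoting Schneider's theorem — the source of Theorem \ref{th ECF} — in its original form as a formula for $\chi_t(\Gamma_F,E[p^\infty])$ and comparing it with Theorem \ref{th ECF}.
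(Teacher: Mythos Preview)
The paper does not supply a proof of this proposition; it is stated without argument, the identity being a standard fact drawn from the references \cite{csslinks, zerbes, ray1, raymulti} cited at the opening of the section. Your argument---dualize to $X=\op{Sel}_{p^\infty}(E/F_\infty)^\vee$, reduce via pseudo-isomorphism to the elementary module, and compute $\#\op{cok}\psi/\#\op{ker}\psi$ summand by summand---is precisely the standard proof found in those sources, and it is correct. Two small remarks. First, the case ``$\Lambda/(Th)$ with $h(0)\neq 0$'' becomes unnecessary once one uses the refined form of the structure theorem in which each $f_j$ is a power of a single irreducible distinguished polynomial: the summands are then either coprime to $T$ or of the form $\Lambda/(T^a)$, and your remaining cases already cover everything. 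Second, the invariance of the ratio $\#\op{cok}\psi/\#\op{ker}\psi$ under pseudo-isomorphism---the step you rightly flag as delicate when $r>0$---is indeed the only point requiring care, and the snake-lemma/Herbrand-quotient mechanism you outline is exactly how the cited references handle it. In short, you have reproduced the proof that the paper outsources.
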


The above result motivates the following definition.
\begin{Definition}
Let $E_{/F}$ be an elliptic curve satisfying the conditions of Theorem \ref{th ECF}. Suppose that $\op{rank} E(F)>1$ and that the truncated Euler characteristic is not defined. Then, we simply set $\chi_t(\Gamma_F, E[p^\infty])$ to denote $|g(0)|_p^{-1}$.
\end{Definition}
Thus, from here on in, the truncated Euler characteristic is always defined and equals $|g(0)|_p^{-1}$. Putting it all together, we have the following result.
\begin{Corollary}\label{cor ECF}
Let $E$ be an elliptic curve over a number field $F$ and $p$ an odd prime. Assume that the conditions of Theorem \ref{th ECF} and satisfied. Then, the truncated Euler characteristic is given (up to a $p$-adic unit) by
\[\chi_t(\Gamma_F, E[p^\infty])\sim  \mathcal{R}_p(E/F) \times \# \Sha(E/F)[p^\infty] \times \prod_{v\nmid p} c_{v}^{(p)}(E/F) \times \left(\# \widetilde{E}(\kappa_v)[p^\infty]\right)^2.\]
\end{Corollary}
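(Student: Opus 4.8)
The plan is a short synthesis of the three facts established immediately above the statement: the preceding Proposition, the convention fixed in the following Definition, and Theorem \ref{th ECF}. Since everything substantive has already been cited, the proof amounts to bookkeeping plus transitivity of $\sim$.

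\textbf{Step 1: the truncated Euler characteristic equals $|g(0)|_p^{-1}$ in all cases.} Write $f(T)$ for the characteristic element of $\op{Sel}_{p^\infty}(E/F_\infty)$, let $r$ be its order of vanishing at $T=0$, and $f(T)=T^r g(T)$ with $g(0)\neq 0$. If the kernel and cokernel of $\Phi$ are finite — this in particular holds whenever $\op{rank}E(F)\leq 1$, by the discussion preceding the definition of the truncated Euler characteristic — then the preceding Proposition gives $\chi_t(\Gamma_F,E[p^\infty])\sim g(0)$. Since $\op{Sel}_{p^\infty}(E/F_\infty)$ is $p$-primary, $\ker\Phi$ and $\op{cok}\Phi$ are finite $p$-groups, so $\chi_t(\Gamma_F,E[p^\infty])\in p^{\Z}$, and for such a quantity the relation $\sim g(0)$ is the same as the equality $\chi_t(\Gamma_F,E[p^\infty])=|g(0)|_p^{-1}$. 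In the only remaining case, namely $\op{rank}E(F)>1$ with $\Phi$ having infinite kernel or cokernel, this same equality holds by the convention fixed in the Definition. This is exactly the assertion recorded just after that Definition, that $\chi_t$ is always defined and equal to $|g(0)|_p^{-1}$.

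\textbf{Step 2: apply Theorem \ref{th ECF} and chain.} The four numbered hypotheses of Theorem \ref{th ECF} are precisely those assumed in Corollary \ref{cor ECF}, so the theorem applies and yields both $r=\op{rank}E(F)$ and
\[g(0)\sim \mathcal{R}_p(E/F)\times \#\Sha(E/F)[p^\infty]\times \prod_{v\nmid p} c_v^{(p)}(E/F)\times \left(\#\widetilde{E}(\kappa_v)[p^\infty]\right)^2.\]
Both sides lie in $\Q_p^\times$ — the right-hand side is a nonzero rational number viewed in $\Q_p$ via the canonical embedding — and $\sim$, being equality up to a factor in $\Z_p^\times$, is an equivalence relation on $\Q_p^\times$. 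Combining $\chi_t(\Gamma_F,E[p^\infty])=|g(0)|_p^{-1}\sim g(0)$ from Step 1 with this displayed formula and invoking transitivity of $\sim$ gives
\[\chi_t(\Gamma_F, E[p^\infty])\sim \mathcal{R}_p(E/F)\times \#\Sha(E/F)[p^\infty]\times \prod_{v\nmid p} c_v^{(p)}(E/F)\times \left(\#\widetilde{E}(\kappa_v)[p^\infty]\right)^2,\]
which is the assertion of the Corollary.

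\textbf{On the difficulty.} There is essentially no genuine obstacle here: all of the cohomological and $p$-adic analytic content is packaged inside Theorem \ref{th ECF} (Perrin-Riou in the CM case, Schneider in general), which is being quoted wholesale, together with the earlier comparison of $\chi_t$ with $g(0)$. The only points that require attention are the routine ones — verifying that the hypotheses of Corollary \ref{cor ECF} match those needed for Theorem \ref{th ECF} and for the preceding Proposition, and stitching in the $\op{rank}E(F)>1$ case in which $\chi_t$ is defined by fiat to be $|g(0)|_p^{-1}$ — after which transitivity of $\sim$ closes the argument.
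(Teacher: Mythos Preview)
Your proof is correct and follows exactly the route the paper intends: the paper gives no separate proof for this Corollary beyond the phrase ``Putting it all together,'' and what you have written is precisely that synthesis of Theorem \ref{th ECF}, the preceding Proposition, and the convention in the Definition. One minor quibble: in Step 2 you describe the right-hand side as ``a nonzero rational number,'' but $\mathcal{R}_p(E/F)$ is a genuinely $p$-adic quantity, not in general rational --- this does not affect the argument, since all you need is that it lies in $\Q_p^\times$.
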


The truncated Euler characteristic $\chi_t(\Gamma_F, E[p^\infty])$ is of the form $p^n$, where $n\in \Z_{\geq 0}$. Thus, either $\chi_t(\Gamma_F, E[p^\infty])=1$ or divisible by $p$. When $\chi_t(\Gamma_F, E[p^\infty])=1$, the Iwasawa invariants $\mu_p(E/F)$ and $\lambda_p(E/F)$ are fully understood. The following result gives a relationship between the Iwasawa invariants and the (truncated) Euler characteristic.
\begin{Lemma}\label{lemma ECF mulambda}
Let $E_{/F}$ be an elliptic curve satisfying the conditions of Theorem \ref{th ECF}. Then, the following are equivalent
\begin{enumerate}
    \item $\mu_p(E/F)=0$ and $\lambda_p(E/F)=\op{rank} E(F)$,
    \item $\chi_t(\Gamma_F, E[p^\infty])=1$.
\end{enumerate}
\end{Lemma}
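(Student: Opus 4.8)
The plan is to compare two factorizations of the characteristic element $f(T)$ of $\op{Sel}_{p^\infty}(E/F_\infty)$. By the structure theorem, $f(T)=p^{\mu}\prod_j f_j(T)$ with $\mu=\mu_p(E/F)$ and $\sum_j\deg f_j=\lambda_p(E/F)$, each $f_j$ distinguished; in particular $\prod_j f_j(T)$ is itself distinguished of degree $\lambda_p(E/F)$. Writing $f(T)=T^r g(T)$ with $g(0)\neq 0$, Theorem \ref{th ECF} identifies $r$ with $\op{rank}E(F)$, and by the convention fixed just before Corollary \ref{cor ECF} we have $\chi_t(\Gamma_F, E[p^\infty])=|g(0)|_p^{-1}$ in all cases. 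Thus condition (2) is equivalent to $g(0)\in\Z_p^{\times}$, and the whole lemma reduces to extracting the $p$-valuation of $g(0)$ and the relevant degree from the distinguished factorization.

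First I would prove $(1)\Rightarrow(2)$: if $\mu_p(E/F)=0$ and $\lambda_p(E/F)=\op{rank}E(F)=r$, then $f(T)=\prod_j f_j(T)$ is distinguished of degree $r$; since it vanishes to order exactly $r$ at $T=0$, every coefficient of degree $<r$ vanishes, forcing $f(T)=T^r$, hence $g\equiv 1$ and $\chi_t(\Gamma_F,E[p^\infty])=1$.

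Next I would prove $(2)\Rightarrow(1)$ in two steps. Writing $\prod_j f_j(T)=T^{r}h(T)$ with $h(0)\neq 0$ (the order of vanishing is forced to be $r$ because $p^{\mu}\neq 0$), we get $g(T)=p^{\mu}h(T)$, so $g(0)=p^{\mu}h(0)$; since $g(0)$ is a unit and $h(0)\in\Z_p$, this forces $\mu=\mu_p(E/F)=0$. With $\mu=0$, $f(T)=\prod_j f_j(T)$ is distinguished of degree $\lambda:=\lambda_p(E/F)\geq r$, and $g(0)$ is exactly its coefficient in degree $r$; if $r<\lambda$ this is a non-leading coefficient of a distinguished polynomial, hence divisible by $p$, contradicting $g(0)\in\Z_p^{\times}$. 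Therefore $\lambda=r=\op{rank}E(F)$, which is (1).

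The argument is essentially bookkeeping once Theorem \ref{th ECF} and the structure theorem are available; the only mild subtlety is to make sure the identification $\chi_t(\Gamma_F,E[p^\infty])=|g(0)|_p^{-1}$ is used, rather than the original definition via $\op{ker}\Phi$ and $\op{cok}\Phi$, so that the rank $>1$ case where $\Phi$ need not have finite kernel and cokernel is also covered. I do not anticipate a real obstacle beyond this.
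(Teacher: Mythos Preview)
Your argument is correct. The paper itself does not give a proof but simply cites \cite[Lemma 3.4]{kunduray1}, whereas you supply a self-contained argument directly from the structure theorem and the identification $\chi_t(\Gamma_F,E[p^\infty])=|g(0)|_p^{-1}$ established earlier in the section. Your proof is almost certainly what the cited lemma does anyway: the key observations are that a product of distinguished polynomials is again distinguished (so $\prod_j f_j(T)$ is monic of degree $\lambda_p(E/F)$ with all lower coefficients in $p\Z_p$), and that the degree-$r$ coefficient of $f(T)$ is exactly $g(0)$. Your care in using the convention $\chi_t=|g(0)|_p^{-1}$ rather than the original $\#\ker\Phi/\#\operatorname{cok}\Phi$ definition is appropriate, since the paper explicitly extends the definition this way to cover the case $\op{rank}E(F)>1$.
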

\begin{proof}
The result follows from \cite[Lemma 3.4]{kunduray1}.
\end{proof}
\section{Stability for a fixed elliptic curve of rank 0 and varying prime $p$}\label{s4}
\par In this section, we fix an odd prime number $p$ and an elliptic curve $E_{/\Q}$ with good ordinary reduction at $p$. Recall that a cyclic $p$-extension $L/\Q$ is a Galois extension for which $\op{Gal}(L/\Q)\simeq \Z/p^n\Z$ for some positive integer $n$. We shall also refer to such an extension as a $\Z/p^n\Z$-extension. Let $\op{Cyc}_p$ denote the set of all cyclic $p$-extensions $L/\Q$ and for $n\in \Z_{\geq 1}$ denote by $\op{Cyc}_p^n\subset \op{Cyc}_p$ the subset of cyclic $p$-extensions $L/\Q$ such that $\op{Gal}(L/\Q)\simeq \Z/p^n\Z$. Given a finite set of prime numbers $\Sigma$, let $\op{Cyc}_p^{\Sigma, n}$ consist of $L\in \op{Cyc}_p^{n}$ such that all primes $\ell \in \Sigma$ split in $L$.
\begin{Definition}\label{def diophantine stab}
Let $V$ be a variety defined over $\Q$. For a number field extension $L/\Q$, $V$ is said to be \textit{diophantine stable} w.r.t. $L$ if $V(L)=V(\Q)$. Let $p$ be a prime number, $V$ is said to be diophantine stable at $p$ if for every positive integer $n$ and every finite set of primes $\Sigma$, there are infinitely many cyclic $p$-extensions $L\in \op{Cyc}_p^{\Sigma, n}$ such that $V$ is diophantine stable w.r.t. $L$.

\end{Definition}

We note that the additional requirement that any finite set of primes $\Sigma$ should be split is quite a strong one. One is naturally interested in the following question in arithmetic statistics. 
\begin{question}
Given a variety $V_{/\Q}$, what is the proportion of primes $p$ such that it can be shown that $V$ is diophantine stable at $p$?
\end{question}

For a simple abelian variety $A_{/\Q}$ whose $\bar{\Q}$-endomorphisms are all defined over $\Q$, then Mazur and Rubin show that there is a set of primes $p$ of positive density at which $A$ is diophantine stable at $p$. It is shown in Theorem \ref{thm section 4 main} that if $E_{/\Q}$ is a non-CM elliptic curve with Mordell-Weil rank 0, then $E$ is diophantine stable at $p$ for consists of $100\%$ of the primes $p$. This particular result should be contrasted to an earlier result \cite[Theorem A]{BKR}, in which it is shown that for $100\%$ of the primes $p$, there are infinitely many $\Z/p\Z$-extensions $L/\Q$ in for which $E(\Q)=E(L)$. However, the result proved in this section is much stronger since it gives more control on the splitting behaviour of primes in $p$-cyclic $\Z/p^n\Z$-extensions. According to Remark \ref{remark after thm}, the set of all extensions $L\in \op{Cyc}_p^{\Sigma, n}$ w.r.t. which $E$ is diophantine stable has a an explicit description, which makes them very abundant. Furthermore, Theorem \ref{thm section 4 main} asserts that the stability of the $p$-primary part of the Tate-Shafarevich can also be controlled for $100\%$ of the primes $p$. Inspired by the definition of Mazur and Rubin for diophantine stability of the Mordell Weil group, we introduce the notion of $\Sha$-stability, for the $p$-primary part of the Tate-Shafarevich group.
\begin{Definition}\label{shastab}
 Let $E_{/\Q}$ be an elliptic curve for which $\Sha(E/\Q)$ is finite. Let $p$ be a prime number. We say that $E$ is $\Sha$-stable at $p$ if for every $n\in \Z_{\geq 1}$ and finite set of primes $\Sigma$, there are infinitely many $L\in \op{Cyc}_p^{n,\Sigma}$ such that \begin{equation}\label{sha stable}\#\Sha(E/L)[p^\infty]=\#\Sha(E/\Q)[p^\infty].\end{equation}
\end{Definition}
\par Note that since $\Sha(E/\Q)$ is assumed to be finite, it follows that $\Sha(E/\Q)[p^\infty]=0$ for all but finitely many primes $p$. Thus for all but finitely many primes $p$, \eqref{sha stable} translates to the condition $\Sha(E/L)[p^\infty]=0$.

\begin{Definition}\label{def PE}For an elliptic curve $E_{/\Q}$ for which $\op{rank} E(\Q)=0$, let $\mathcal{P}_E$ consist of the primes $p$ such that
\begin{enumerate}
    \item $p$ is odd,
    \item $E[p]$ is irreducible as a $\op{Gal}(\bar{\Q}/\Q)$-module,
    \item $E$ has good ordinary reduction at $p$,
    \item $\op{Sel}_{p^\infty}(E/\Q_\infty)=0$.
\end{enumerate}
\end{Definition}
The following result will be used at various points in in this paper.
\begin{Lemma}\label{NSW lemma}
Let $G$ and $M$ be finite abelian groups of $p$-power order such that $G$ acts on $M$. Suppose that $M^G=0$, then $M=0$.
\end{Lemma}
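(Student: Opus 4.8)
The plan is to use the standard fixed-point congruence for finite $p$-groups acting on finite sets; no Iwasawa theory or cohomology is needed. First I would dispose of the trivial case: if $G$ is the trivial group, then $M^G=M$, so the hypothesis $M^G=0$ immediately gives $M=0$. Hence from now on assume $|G|=p^a$ with $a\geq 1$, so that $G$ is a nontrivial finite $p$-group.

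Next, I would forget the group structure on $M$ and regard it simply as a finite set on which the $p$-group $G$ acts. Decompose $M$ into $G$-orbits. The size of each orbit divides $|G|$, hence is a power of $p$, and an orbit has size $1$ exactly when its unique element is fixed by all of $G$, i.e. lies in $M^G$. Summing orbit sizes therefore gives
\[
\#M = \#M^G + \sum_{\mathcal{O}} \#\mathcal{O},
\]
where $\mathcal{O}$ runs over the $G$-orbits of size $>1$; each term in the final sum is divisible by $p$, so $\#M \equiv \#M^G \pmod{p}$.

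Finally, suppose toward a contradiction that $M\neq 0$. Since $\#M$ is a power of $p$ and $M$ is nontrivial, $p\mid \#M$, and the congruence above forces $p\mid \#M^G$. But $0\in M^G$, so $\#M^G\geq 1$; together with $p\mid \#M^G$ this gives $\#M^G\geq p>1$, contradicting $M^G=0$. Hence $M=0$, as claimed.

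I do not expect any genuine obstacle here: the argument is elementary, and its only real ingredient is the orbit-counting congruence $\#M\equiv\#M^G\pmod p$, which is exactly the mechanism behind the classical fact that a nontrivial finite $p$-group has nontrivial centre. (One could instead argue by induction on $|G|$, passing to a subgroup of index $p$ and applying the statement to the quotient action on the smaller fixed-point group, but the direct orbit count seems cleanest.)
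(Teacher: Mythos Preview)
Your proof is correct; the orbit-counting congruence $\#M\equiv\#M^G\pmod p$ together with $0\in M^G$ is exactly the right mechanism. The paper itself does not give an argument but simply cites \cite[Proposition~1.6.12]{NSW}, whose proof is precisely this standard $p$-group fixed-point argument, so your write-up is essentially an inlining of the referenced result.
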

\begin{proof}
The result follows from \cite[Proposition 1.6.12]{NSW}.
\end{proof}
\begin{Remark}\label{remark}
For $p\in \mathcal{P}_E$, we have that $E(\Q)[p]=0$ since $E[p]$ is irreducible as a Galois module. Since $\Q_\infty$ is a pro-$p$ extension of $\Q$, it follows from Lemma \ref{NSW lemma} that $E(\Q_\infty)[p^\infty]=0$. Therefore, $\op{Sel}_{p^\infty}(E/\Q)$ injects into $\op{Sel}_{p^\infty}(E/\Q_\infty)$. On the other hand, $\op{Sel}_{p^\infty}(E/\Q)$ fits into a short exact sequence
\[0\rightarrow E(\Q)\otimes \Q_p/\Z_p\rightarrow \op{Sel}_{p^\infty}(E/\Q)\rightarrow \Sha(E/\Q)[p^\infty]\rightarrow 0.\] Hence, for $p\in \mathcal{P}_E$, we have that $\Sha(E/\Q)[p^\infty]=0$.
\end{Remark}
For $x>0$, let $\mathcal{P}_E^c(x)$ consist of all primes $p\notin \mathcal{P}_E$ such that $E$ has good ordinary reduction at $p$ and $p\leq x$.
\begin{Th}\label{100 percent}
The set $\mathcal{P}_E$ consists of $100\%$ of the primes $p$. Furthermore, we have the following growth estimate for the complement of $\mathcal{P}_E$
\[\# \mathcal{P}_E^c(x)\ll \frac{x (\log \log x)^2}{(\log x)^2}.\]
\end{Th}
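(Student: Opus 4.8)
The plan is to pin down exactly which primes of good ordinary reduction fall outside $\mathcal{P}_E$, and to show that --- apart from a set of bounded cardinality --- this happens precisely when $p$ is an ``anomalous'' prime of $E$, i.e.\ $a_p=1$, a condition whose frequency is governed by the Lang--Trotter conjecture. Throughout I use that $E$ is non-CM, so that by Serre's open image theorem $\bar\rho_{E,p}$ is surjective --- in particular $E[p]$ is irreducible and $E(\Q)[p]=0$ --- for all but finitely many $p$. Since $\mathcal{P}_E^c(x)$ consists by definition of primes $p\le x$ of good ordinary reduction, after discarding $O(1)$ primes I may assume $p$ odd, $E$ good ordinary at $p$, and $E[p]$ irreducible; it then remains to bound the number of such $p\le x$ with $\op{Sel}_{p^\infty}(E/\Q_\infty)\neq0$.

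First I would apply the Euler characteristic formula. For such $p$ all hypotheses of Theorem \ref{th ECF} hold ($E(\Q)[p]=0$ by irreducibility; $\op{Reg}_p(E/\Q)=1\neq0$ since $\op{rank}E(\Q)=0$; $\Sha(E/\Q)$ finite by assumption; good ordinary at $p$), so $\chi_t(\Gamma_\Q,E[p^\infty])$ is defined, and by Corollary \ref{cor ECF}, using $\mathcal{R}_p(E/\Q)=1$, it equals as a power of $p$
\[
\chi_t(\Gamma_\Q,E[p^\infty])\ \sim\ \#\Sha(E/\Q)[p^\infty]\ \times\ \prod_{\ell\mid N}c_\ell^{(p)}(E/\Q)\ \times\ \left(\#\widetilde{E}(\F_p)[p^\infty]\right)^2.
\]
By Lemma \ref{lemma ECF mulambda} this is $1$ exactly when $\mu_p(E/\Q)=\lambda_p(E/\Q)=0$; and since $E(\Q_\infty)[p^\infty]=0$ (Lemma \ref{NSW lemma}, as in Remark \ref{remark}), the module $\op{Sel}_{p^\infty}(E/\Q_\infty)^\vee$ has no nonzero finite $\Lambda$-submodule (Greenberg), so $\mu_p(E/\Q)=\lambda_p(E/\Q)=0$ already forces $\op{Sel}_{p^\infty}(E/\Q_\infty)=0$. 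Putting this together, for the primes in question
\[
p\notin\mathcal{P}_E\quad\Longleftrightarrow\quad p\mid\#\Sha(E/\Q)\ \ \text{or}\ \ p\mid c_\ell(E/\Q)\text{ for some }\ell\mid N\ \ \text{or}\ \ p\mid\#\widetilde{E}(\F_p).
\]

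Next I would bound the right-hand side. The first two alternatives hold for only finitely many $p$, since $\#\Sha(E/\Q)$ and each $c_\ell(E/\Q)$ is a fixed positive integer. For the third, $\#\widetilde{E}(\F_p)=p+1-a_p$, so $p\mid\#\widetilde{E}(\F_p)$ forces $a_p\equiv1\pmod p$; as $|a_p|\le2\sqrt p<p$ for $p\ge5$ and $E$ has good reduction, this is the equality $a_p=1$. Hence $\#\mathcal{P}_E^c(x)\le\#\{p\le x:a_p=1\}+O(1)$, and the claimed bound follows from the best known unconditional estimate towards the Lang--Trotter conjecture, $\#\{p\le x:a_p=1\}\ll x(\log\log x)^2/(\log x)^2$, which applies since $E$ is non-CM. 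As this is $o(\pi(x))$ and the primes of bad or supersingular reduction also form a density-zero set, $\mathcal{P}_E$ has natural density $1$, i.e.\ consists of $100\%$ of the primes.

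The Iwasawa-theoretic steps are essentially bookkeeping with Theorem \ref{th ECF}, Corollary \ref{cor ECF} and Lemma \ref{lemma ECF mulambda}; the genuine obstacle is analytic, namely importing a sufficiently strong unconditional Lang--Trotter bound, and it is exactly this input that produces the bound of shape $x(\log\log x)^2/(\log x)^2$ rather than a cleaner power of $\log x$. If one only wanted the qualitative statement that $\mathcal{P}_E$ has density $1$, it would suffice to know $\#\{p\le x:a_p=1\}=o(\pi(x))$, which already follows from an effective form of the Sato--Tate distribution (or even a crude Chebotarev or large-sieve argument).
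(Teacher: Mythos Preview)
Your proof is correct and follows essentially the same route as the paper: reduce membership in $\mathcal{P}_E$ (for good ordinary $p$ with $E[p]$ irreducible) to the Euler-characteristic criterion $\chi_t(\Gamma_\Q,E[p^\infty])=1$, observe that the $\Sha$ and Tamagawa contributions exclude only finitely many primes, and then bound the anomalous primes $p\mid\#\widetilde{E}(\F_p)$ by the Lang--Trotter-type estimate (which is precisely Murty's result cited in the paper). The only cosmetic slip is that $|a_p|\le 2\sqrt p<p$ does not quite force $a_p=1$ when $p=5$ (one could have $a_p=-4$), but this single prime is absorbed in your $O(1)$ anyway.
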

\begin{proof}
The result essentially follows is due to R.Greenberg \cite[Theorem 5.1]{greenbergITEC} and the asymptotic estimate follows from the main result of V.K.Murty \cite{murty}. We provide details here for completeness. It follows from Serre's open image theorem that there $E[p]$ is irreducible as a Galois module for all but finitely many primes. Serre showed that $E$ has good ordinary reduction for $100\%$ of primes $p$, see \cite{serrechebotarev}. It follows from \cite[Corollary 3.6]{kunduray1} and the Euler characteristic formula that the following conditions are equivalent for a prime $p$ at which $E$ has good ordinary reduction and $E[p]$ is irreducible as a Galois module
\begin{enumerate}
    \item $\op{Sel}_{p^\infty}(E/\Q_\infty)=0$, 
    \item $\mu_p(E/\Q)=0$ and $\lambda_p(E/\Q)=0$, 
    \item $\chi_t(\Gamma_\Q, E[p^\infty])=1$,
    \item $p\nmid \#\Sha(E/\Q)$, $p\nmid c_\ell(E)$ for all primes $\ell\neq p$ and $p\nmid \#\widetilde{E}(\F_p)$.
\end{enumerate}
Clearly, $p\nmid \#\Sha(E/\Q)$ and $p\nmid c_\ell(E)$ for all but finitely many primes $p$. It follows from \cite{murty} that the proportion of primes $p$ such that $p\mid \#\widetilde{E}(\F_p)$ is $\ll \frac{x(\log \log x)^2}{(\log x)^2}$. The result follows from this.
\end{proof}

The main result of this section is the following.
\begin{Th}\label{thm section 4 main}
Let $E_{/\Q}$ be an elliptic curve without complex multiplication such that $\op{rank} E(\Q)=0$. Then $E$ is diophantine stable \emph{and} $\Sha$-stable at every prime $p\geq 11$ such that $p\in \mathcal{P}_E$. As a result, $E$ is diophantine stable and $\Sha$-stable at $100\%$ of the primes $p$.\end{Th}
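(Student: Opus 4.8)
The plan is to fix a prime $p\geq 11$ with $p\in\mathcal P_E$ and, for an arbitrary pair $(n,\Sigma)$, to construct infinitely many $L\in\op{Cyc}_p^{n,\Sigma}$ with $E(L)=E(\Q)$ and $\#\Sha(E/L)[p^\infty]=\#\Sha(E/\Q)[p^\infty]$; the assertion about $100\%$ of primes then follows from Theorem \ref{100 percent}, since discarding the finitely many primes below $11$ does not change densities. From $p\in\mathcal P_E$ and the equivalences recorded in the proof of Theorem \ref{100 percent} we have $\mu_p(E/\Q)=\lambda_p(E/\Q)=0$ and $\chi_t(\Gamma_\Q,E[p^\infty])=1$, and from Remark \ref{remark} we have $E(\Q)[p]=0$ and $\Sha(E/\Q)[p^\infty]=0$. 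I would take $L$ to be the unique degree‑$p^n$ subfield of $\Q(\mu_q)$ for a well‑chosen prime $q$; such an $L$ is a $\Z/p^n\Z$-extension, totally ramified at $q$ and unramified at every other prime. The conditions I impose on $q$ are: $q\nmid pN$; $q\equiv 1\pmod{p^n}$ (so that $L$ exists); every $\ell\in\Sigma$ is a $p^n$-th power modulo $q$ (equivalently $\ell$ splits completely in $L$, so $L\in\op{Cyc}_p^{n,\Sigma}$); $p\nmid\#\widetilde E(\F_q)$; and $\bar\rho_{E,q}$ is surjective. The first four are Chebotarev conditions cutting out a positive-density set of $q$ — the compatibility of $p\nmid\#\widetilde E(\F_q)$ with $q\equiv 1\pmod{p^n}$ uses the irreducibility of $E[p]$, which forces $\op{image}(\bar\rho_{E,p})\cap\SL_2(\F_p)$ to contain an element without eigenvalue $1$ — and the last holds for all but finitely many $q$ by Serre's open image theorem \cite{serreopenimage}, as $E$ is non‑CM. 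Hence infinitely many admissible $q$ exist, and distinct $q$ yield distinct $L$.

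Next I would compute the Iwasawa invariants over such an $L$. Since $q\neq p$ we have $L\cap\Q_\infty=\Q$, so Theorem \ref{kida thm} applies and gives $\mu_p(E/L)=0$ together with $\lambda_p(E/L)=[L:\Q]\lambda_p(E/\Q)+\sum_{\tilde w\in P_1}(e(\tilde w/\tilde\ell)-1)+2\sum_{\tilde w\in P_2}(e(\tilde w/\tilde\ell)-1)$. The first term vanishes because $\lambda_p(E/\Q)=0$. The only primes of $L_\infty$ ramified over $\Q_\infty$ lie above $q$: at these $E$ has good reduction, so they lie in neither $P_1$; their residue field is a $p$-power extension of $\F_q$, so $p\nmid\#\widetilde E(\F_q)$ — i.e.\ Frobenius at $q$ acts on $\widetilde E[p]$ without the eigenvalue $1$ — forces $\widetilde E$ to have no $p$-torsion over that residue field, so they lie in neither $P_2$. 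Every other prime occurring in $P_1\cup P_2$ is unramified over $\Q_\infty$, so $e(\tilde w/\tilde\ell)=1$ there. Hence $\lambda_p(E/L)=0$.

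Finally I would turn $\mu_p(E/L)=\lambda_p(E/L)=0$ into the conclusion. Since $E(\Q)[p]=0$ and $L_\infty/\Q$ is pro‑$p$, Lemma \ref{NSW lemma} gives $E(L_\infty)[p^\infty]=0$, so $\op{Sel}_{p^\infty}(E/L)$ embeds into $\op{Sel}_{p^\infty}(E/L_\infty)$, which is finite because its $\mu$- and $\lambda$-invariants vanish. Therefore $\op{rank}E(L)=0$ and $\Sha(E/L)[p^\infty]$ is finite, the hypotheses of Theorem \ref{th ECF} hold over $L$, and Lemma \ref{lemma ECF mulambda} with Corollary \ref{cor ECF} give: $\mu_p(E/L)=\lambda_p(E/L)=\op{rank}E(L)=0$ forces $\chi_t(\Gamma_L,E[p^\infty])=1$, and since the rank is $0$ the Euler characteristic formula reads $1\sim\#\Sha(E/L)[p^\infty]\times\prod_{v\nmid p}c_v^{(p)}(E/L)\times(\#\widetilde E(\kappa_v)[p^\infty])^2$, forcing each factor to equal $1$; in particular $\Sha(E/L)[p^\infty]=0=\Sha(E/\Q)[p^\infty]$, which is $\Sha$-stability. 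For diophantine stability there remains $E(L)_{\op{tors}}=E(\Q)_{\op{tors}}$: for a prime $\ell\neq q$, a point in $E(L)[\ell^\infty]\setminus E(\Q)[\ell^\infty]$ of minimal order $\ell^j$ generates a nontrivial subextension of $L/\Q$ that is both unramified outside $q$ and contained in $\Q(E[\ell^j])$ (hence unramified outside $\ell N$), forcing $q\mid\ell N$, so $q=\ell$, a contradiction; thus $E(L)_{\op{tors}}/E(\Q)_{\op{tors}}$ is a $q$-group, while $E(\Q)[q]=0$ by Mazur's theorem as $q\geq 11$ and $E(L)[q]=0$ because $\op{Gal}(\bar\Q/L)\supseteq\op{Gal}(\bar\Q/\Q(\mu_q))$ maps onto $\SL_2(\F_q)$, which fixes no nonzero vector in $E[q]$. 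Hence $E(L)=E(\Q)$.

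I expect the main difficulty to be the construction step: packaging all of the Chebotarev conditions onto a single prime $q$ simultaneously — in particular verifying the consistency of $p\nmid\#\widetilde E(\F_q)$ with $q\equiv 1\pmod{p^n}$ from the irreducibility of $E[p]$ alone (we do not assume $\bar\rho_{E,p}$ surjective) — together with the local bookkeeping that kills the correction terms in Kida's formula and rules out new prime‑to‑$p$ torsion. Once $q$ is fixed, the Iwasawa‑theoretic core (Theorem \ref{kida thm} and the truncated Euler characteristic formula) is essentially formal.
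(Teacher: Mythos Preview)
Your proposal is correct and shares the Iwasawa-theoretic core with the paper (Hachimori--Matsuno plus the truncated Euler characteristic formula), but the construction of the extensions $L$ and the torsion argument are genuinely different. The paper does not try to force a single auxiliary prime to carry all the constraints: instead it fixes $t=\#\Sigma$, chooses $t+1$ primes $\ell_1,\dots,\ell_{t+1}$ in the set $\mathfrak S_n$ of Definition~\ref{definition sn}, and then uses a pigeonhole argument (Lemma~\ref{s4 splitting}) inside $\Q(\mu_{\ell_1\cdots\ell_{t+1}})$ to find a $\Z/p^n\Z$-subextension in which every prime of $\Sigma$ splits. This sidesteps entirely the Kummer-theoretic packaging that you identify as the main difficulty, at the cost of allowing several ramified primes. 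For $E(L)_{\op{tors}}=E(\Q)_{\op{tors}}$ the paper simply invokes \cite[Theorem~7.2]{GJN}, whereas your ramification argument (together with the imposed surjectivity of $\bar\rho_{E,q}$) is self-contained. One further point of comparison: the paper appeals to Lemma~\ref{s4 positive density} to show $\mathfrak S_n$ is infinite, and that lemma is stated under the hypothesis that $\bar\rho_{E,p}$ is surjective, which is not part of the definition of $\mathcal P_E$; your argument that irreducibility alone forces $\op{image}(\bar\rho_{E,p})\cap\SL_2(\F_p)$ to contain an element without eigenvalue $1$ is exactly what is needed to cover the remaining primes, and the same observation (that this intersection has no $p$-power quotient) also gives the linear disjointness of $\Q(E[p])$ from $\Q(\mu_{p^n},\{\ell^{1/p^n}\}_{\ell\in\Sigma})$ over $\Q(\mu_p)$ that your single-prime Chebotarev argument needs.
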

\begin{Remark}\label{remark after thm}
 In fact the method produces many extensions $L/\Q$ with respect to which $E$ is both diophantine and $\Sha$-stable. Let $p\geq 11$ be a prime in $\mathcal{P}_E$ and $n\in \Z_{\geq 1}$. Then, there is a positive density set of primes $\mathfrak{S}_n$ such that given a finite set of primes $\Sigma$, and any set of primes $\ell_1, \dots, \ell_{\#\Sigma+1}\in \mathfrak{S}_n$, there is a $\Z/p^n\Z$-extension $L/\Q$ such that all of the following conditions hold:
\begin{enumerate}
    \item the primes that ramify in $L$ are a subset of $\{\ell_1, \dots, \ell_{\#\Sigma+1}\}$,
    \item all primes of $\Sigma$ split in $L$, 
    \item $E(L)=E(\Q)$, 
    \item $\Sha(E/L)[p^\infty]=0$.
\end{enumerate}
Furthermore, according to Lemma \ref{s4 positive density}, the density of $\mathfrak{S}_n$ is equal to $\frac{p^2-p-1}{p^{n-1} (p+1)(p-1)^2}$.
\end{Remark}
We now make preparations for the proof of Theorem \ref{thm section 4 main}. Given an elliptic curve with good reduction at a prime $\ell$, let $\widetilde{E}$ denote the reduced curve at $\ell$, and $\widetilde{E}(\F_\ell)$ the group of $\F_\ell$-points of $E$.
\begin{Definition}
 Let $E_{/\Q}$ be an elliptic curve and $p$ an odd prime. Let $Q_1=Q_1(E,p)$ be the set of primes $\ell\neq p$ at which $E$ has bad reduction and $Q_2=Q_2(E,p)$ the set of primes $\ell\neq p$ at which $E$ has good reduction and $p$ divides $\#\widetilde{E}(\F_\ell)$.
\end{Definition}
 Note that the set of primes $Q_1$ is finite, since the set of primes at which $E$ has bad reduction is finite. Given a cyclic $p$-extension $L/\Q$, let $\Sigma_L$ be the set of primes $\ell\neq p$ that ramify in $L$. 
 \begin{Lemma}\label{stability lemma}
 Let $E_{/\Q}$ be an elliptic curve with $\op{rank} E(\Q)=0$ and $p$ an odd prime such that
 \begin{enumerate}
     \item $E$ has good ordinary reduction at $p$, 
     \item $E[p]$ is irreducible as a Galois module,
     \item $\op{Sel}_{p^\infty}(E/\Q_\infty)=0$.
 \end{enumerate}
 Let $L/\Q$ be a cyclic $p$-extension such that
 \begin{enumerate}
    \item $L\cap \Q_\infty=\Q$,
     \item $\Sigma_L$ and $Q_1\cup Q_2$ are disjoint.
 \end{enumerate} Then, the following assertions hold
 \begin{enumerate}
     \item\label{stability: 1} $\op{rank} E(L)=0$,
     \item\label{stability: 2} $\Sha(E/L)[p^\infty]=\Sha(E/\Q)[p^\infty]=0$.
     \item\label{stability: 3} If $p\geq 11$, then $E$ is diophantine-stable w.r.t. $L$, i.e., $E(L)=E(\Q)$.
 \end{enumerate}
 \end{Lemma}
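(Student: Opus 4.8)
The plan is to run everything through the Iwasawa-theoretic machinery set up in Sections 2 and 3, exploiting that the hypotheses on $(E,p)$ say precisely that $p \in \mathcal{P}_E$ (equivalently, by Theorem \ref{100 percent}, $\mu_p(E/\Q) = 0$, $\lambda_p(E/\Q) = 0$, and $\chi_t(\Gamma_\Q, E[p^\infty]) = 1$), and that the hypotheses on $L$ make the Hachimori--Matsuno formula \eqref{kidaformula} trivial. First I would observe that since $E[p]$ is irreducible, $E(\Q)[p] = 0$, and since $L \cap \Q_\infty = \Q$ while $L/\Q$ is a pro-$p$ extension, Lemma \ref{NSW lemma} gives $E(L)[p] = 0$; in particular condition \eqref{th ECF c4} of Theorem \ref{th ECF} holds over $L$. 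Next, apply Theorem \ref{kida thm} (Hachimori--Matsuno) to the pair $(E, L)$: the hypothesis $\mu_p(E/\Q) = 0$ guarantees $\op{Sel}_{p^\infty}(E/L_\infty)$ is $\Lambda$-cotorsion with $\mu_p(E/L) = 0$, and the $\lambda$-formula reads $\lambda_p(E/L) = [L:\Q]\lambda_p(E/\Q) + \sum_{\tilde w \in P_1}(e(\tilde w/\tilde\ell) - 1) + 2\sum_{\tilde w \in P_2}(e(\tilde w/\tilde\ell) - 1)$. Here $\lambda_p(E/\Q) = 0$. The two correction sums are supported on primes $\tilde w$ of $L_\infty$ that are ramified over $\Q_\infty$ and at which $E$ has split multiplicative reduction or acquires a $p$-torsion point; a prime of $L$ ramifies over $\Q$ only if it lies in $\Sigma_L$ (the $p$-adic prime is handled separately since $L \cap \Q_\infty = \Q$), and the disjointness of $\Sigma_L$ from $Q_1 \cup Q_2$ forces both sums to be empty — a prime in $\Sigma_L$ is not a prime of bad reduction (so not in $P_1$) and not one where $p \mid \#\widetilde E(\F_\ell)$, and an elementary argument shows the latter controls whether $E(L_{\infty,\tilde w})$ has a point of order $p$ at the primes in question. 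Hence $\lambda_p(E/L) = 0$ as well.

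Now I would invoke Lemma \ref{lemma ECF mulambda} over $F = L$: since $E$ has good ordinary reduction at all primes above $p$ (the decomposition group at $p$ in $L/\Q$ is a quotient of $\Z/p^n\Z$, and good ordinary reduction is preserved), $\Sha(E/L)$ is finite (this needs justification — see below), the $p$-adic regulator condition is vacuous in rank considerations we are about to derive, and $E(L)[p] = 0$, the equivalence gives that $\mu_p(E/L) = 0$ and $\lambda_p(E/L) = \op{rank} E(L)$ is equivalent to $\chi_t(\Gamma_L, E[p^\infty]) = 1$. Combined with $\lambda_p(E/L) = 0$ from the previous paragraph, we get $\op{rank} E(L) = 0$, which is assertion \eqref{stability: 1}, and also $\chi_t(\Gamma_L, E[p^\infty]) = 1$. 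By Corollary \ref{cor ECF}, since $\op{rank} E(L) = 0$ makes $\mathcal{R}_p(E/L) = 1$, we read off
\[
1 = \chi_t(\Gamma_L, E[p^\infty]) \sim \#\Sha(E/L)[p^\infty] \times \prod_{v \nmid p} c_v^{(p)}(E/L) \times \left(\#\widetilde E(\kappa_v)[p^\infty]\right)^2,
\]
and since every factor on the right is a non-negative power of $p$, each must equal $1$; in particular $\Sha(E/L)[p^\infty] = 0$. Together with $\Sha(E/\Q)[p^\infty] = 0$ (Remark \ref{remark}), this is assertion \eqref{stability: 2}.

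Finally, for \eqref{stability: 3}, assertion \eqref{stability: 1} gives $\op{rank} E(L) = \op{rank} E(\Q) = 0$, so $E(L)$ and $E(\Q)$ are both finite, and it remains to compare their torsion. Since $E(L)[p] = 0$, the group $E(L)_{\op{tors}}$ has order prime to $p$, as does $E(\Q)_{\op{tors}}$; as $L/\Q$ has $p$-power degree, an averaging/transfer argument (or directly: the prime-to-$p$ part of $E(L)_{\op{tors}}$ injects into $E(\Q)_{\op{tors}} \otimes$ a cyclic $p$-group by Galois descent) shows $E(L)_{\op{tors}} = E(\Q)_{\op{tors}}$. Here is where $p \geq 11$ enters and where I expect the main obstacle: one must rule out that a $\Z/p^n\Z$-extension produces extra torsion, and the clean way is to note that for $p \geq 11$ the group $E(L)_{\op{tors}}$ is still one of the Mazur-list groups since $[L:\Q]$ is prime to the order of any torsion that could appear, but a careful bound (using that $E(L)$ has no $p$-torsion and that a degree-$p^n$ extension cannot enlarge a finite group of prime-to-$p$ order that is already $\op{Gal}(\bar\Q/\Q)$-stable) is needed; the threshold $p \geq 11$ is exactly what makes $p$ larger than the primes dividing orders in Mazur's torsion classification, so no new torsion can be created. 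This last comparison, and the verification that $\Sha(E/L)$ is finite in the first place (which is where one uses $\op{rank} E(L) = 0$: for a rank-zero elliptic curve over a number field, finiteness of $\Sha$ is known in many cases, and here it follows from the vanishing $\Sha(E/L)[p^\infty] = 0$ combined with control of the other primes — alternatively one argues via the injection $\op{Sel}_{p^\infty}(E/\Q) \hookrightarrow \op{Sel}_{p^\infty}(E/\Q_\infty) = 0$ after base change), are the two points requiring the most care; everything else is a formal assembly of the cited theorems.
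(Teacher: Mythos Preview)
Your overall strategy matches the paper's: kill the correction terms in Hachimori--Matsuno using the disjointness hypothesis, conclude $\mu_p(E/L)=\lambda_p(E/L)=0$, then read off $\op{rank} E(L)=0$ and $\Sha(E/L)[p^\infty]=0$ from the Euler characteristic machinery. Two points deserve correction.

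\textbf{Deducing $\op{rank} E(L)=0$.} You try to extract $\op{rank} E(L)=0$ from Lemma~\ref{lemma ECF mulambda}, but that lemma is an \emph{equivalence}: knowing $\mu_p(E/L)=0$ and $\lambda_p(E/L)=0$ does not by itself yield $\lambda_p(E/L)=\op{rank} E(L)$ unless you already know $\chi_t(\Gamma_L,E[p^\infty])=1$, and for that you need $\Sha(E/L)[p^\infty]$ finite and the regulator hypothesis---exactly the circularity you flag. The paper sidesteps this entirely by invoking the unconditional inequality $\op{rank} E(L)\le \lambda_p(E/L)$ (Greenberg, \cite[Theorem~1.9]{greenbergITEC}), which holds whenever $\op{Sel}_{p^\infty}(E/L_\infty)$ is $\Lambda$-cotorsion. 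Once $\op{rank} E(L)=0$ is in hand, $\mathcal{R}_p(E/L)=1$ and the hypotheses of Lemma~\ref{lemma ECF mulambda} become available, giving $\chi_t=1$ and hence $\Sha(E/L)[p^\infty]=0$. Your ``alternative'' route (injecting $\op{Sel}_{p^\infty}(E/L)$ into $\op{Sel}_{p^\infty}(E/L_\infty)$) can be made to work, but not as you state it: $\op{Sel}_{p^\infty}(E/L_\infty)$ is not a priori zero, only finite (from $\mu=\lambda=0$); one then needs Greenberg's no-proper-finite-index-submodule result \cite[Proposition~4.14]{greenbergITEC} to force it to vanish.

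\textbf{The torsion comparison in part \eqref{stability: 3}.} This is where your proposal has a genuine gap. Your suggested arguments---``the prime-to-$p$ part of $E(L)_{\op{tors}}$ injects into $E(\Q)_{\op{tors}}$ by Galois descent'' and ``a degree-$p^n$ extension cannot enlarge a finite group of prime-to-$p$ order''---are not correct in general: a $p$-group can act nontrivially on a finite $\ell$-group for $\ell\neq p$ (take $\Z/p\Z$ acting on $\F_\ell$ by a $p$-th root of unity when $\ell\equiv 1\pmod p$), so there is no formal reason $E(L)[\ell^\infty]$ must equal $E(\Q)[\ell^\infty]$. The assertion $E(L)_{\op{tors}}=E(\Q)_{\op{tors}}$ for $[L:\Q]$ a power of $p\ge 11$ is a genuinely nontrivial theorem about torsion growth under base change; the paper simply cites \cite[Theorem~7.2]{GJN}. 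The threshold $p\ge 11$ is not explained by Mazur's list alone---one must rule out, for each small prime $\ell$, that $E$ acquires new $\ell$-torsion over a $p$-power-degree extension of $\Q$, and this uses the classification of torsion over number fields of bounded degree.
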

 \begin{proof}
 First, we prove \eqref{stability: 1}. Let $\ell\in \Sigma_L$, and $\tilde{w}$ a prime of $L_\infty$ above $\ell$. Since $\ell\notin Q_1$, the elliptic curve $E$ has good reduction at $\ell$. Therefore, $\tilde{w}$ is not contained in $P_1$. Since $\ell\notin Q_2$, it follows that $\widetilde{E}(\F_\ell)[p]=0$. Let $k_{\tilde{w}}$ denote the residue field at $\tilde{w}$, since $k_w/\F_\ell$ is pro-$p$ it follows that $\widetilde{E}(k_{\tilde{w}})[p]=0$. Note that the reduction map
 \[E(L_{\infty,\tilde{w}})\rightarrow \widetilde{E}(k_{\tilde{w}})\] is surjective (since $E$ has good reduction at $\ell$) and its kernel is pro-$\ell$. It follows that the map on $p$-torsion
 \[E(L_{\infty,\tilde{w}})[p]\rightarrow \widetilde{E}(k_{\tilde{w}})[p]\]is an isomorphism. Therefore, $E(L_{\infty,\tilde{w}})$ has no $p$-torsion point and we have shown that $\tilde{w}\notin P_1\cup P_2$. Since the Selmer group $\op{Sel}_{p^\infty}(E/\Q_\infty)=0$, we have that
 \[\mu_p(E/\Q)=0\text{ and }\lambda_p(E/\Q)=0.\] Therefore, it follows from \eqref{kidaformula} that $\lambda_p(E/L)=[L:\Q]\lambda_p(E/\Q)=0$. On the other hand, according to \cite[Theorem 1.9]{greenbergITEC} 
 \[\op{rank} E(L)\leq \lambda_p(E/L),\] and we deduce that $\op{rank} E(L)=0$.
 \par Next, we prove \eqref{stability: 2}. Since the Selmer group is $0$, we have that $\mu_p(E/\Q)=0$. It follows from Theorem \ref{kida thm} that $\mu_p(E/L)=0$ as well. On the other hand, it has been shown in the previous paragraph that $\lambda_p(E/L)=0$. Note that since $\op{rank} E(L)=0$, we have that $\mathcal{R}_p(E/L)=1$, and thus the hypotheses of Lemma \ref{lemma ECF mulambda} are satisfied. Therefore, it follows from Lemma \ref{lemma ECF mulambda} that $\chi_t(\Gamma_L, E[p^\infty])=1$. Since $E(\Q)[p]=0$ and $L/\Q$ is a $p$-cyclic extension, it follows from Lemma \ref{NSW lemma} that $E(F)[p]=0$. Therefore, according to the Euler characteristic formula
 \[\chi_t(\Gamma_L, E[p^\infty])\sim  \# \Sha(E/L)[p^\infty] \times \prod_{v\nmid p} c_{v}^{(p)}(E/L) \times \prod_{v|p}\left(\# \widetilde{E}(\kappa_v)[p^\infty]\right)^2.\]
 Since the Euler characteristic is $1$, it follows that each of the terms in the above formula are equal to $1$. In particular, we have shown that $\Sha(E/L)[p^\infty]=0$.
 \par It has been shown that $\op{rank} E(L)=0$. Therefore, in order to prove \eqref{stability: 3}, it suffices to observe that $E(L)_{\op{tors}}=E(\Q)_{\op{tors}}$. This assertion follows from \cite[Theorem 7.2]{GJN}.
 \end{proof}
 \begin{Lemma}\label{s4 splitting}
 Let $\Sigma$ be a finite set of rational primes and $n\in \Z_{\geq 1}$. Set $t:=\# \Sigma$. Let $\ell_1, \dots, \ell_{t+1}$ be primes such that $\ell_i\notin \Sigma$ and $\ell_i\equiv 1\mod{p^n}$ for all $i$ and $m:=\prod_{i=1}^{t+1} \ell_i$. There is a $\Z/p^n\Z$-extension $L/\Q$ contained in $\Q(\mu_m)$ in which all primes $q\in \Sigma$ split.
 \end{Lemma}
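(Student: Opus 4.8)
The plan is to realize $L$ inside the compositum of $\Z/p^n\Z$-subextensions of the fields $\Q(\mu_{\ell_i})$, and then to single out the right subfield by a short argument about $\Z/p^n\Z$-modules. First I would fix notation for the ambient field. Since $\Q(\mu_{\ell_i})/\Q$ is cyclic of degree $\ell_i-1$ and $p^n\mid\ell_i-1$, there is a unique subfield $K_i\subseteq\Q(\mu_{\ell_i})$ with $\Gal(K_i/\Q)\cong\Z/p^n\Z$; it is totally ramified at $\ell_i$ and unramified at all other primes. Taking the $\ell_i$ pairwise distinct---which costs nothing, as in the applications the $\ell_i$ are drawn from an infinite set of primes---so that $m=\prod_i\ell_i$ is squarefree, set $M:=K_1\cdots K_{t+1}\subseteq\Q(\mu_m)$. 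Using the Chinese Remainder Theorem to identify $\Gal(\Q(\mu_m)/\Q)=\prod_i(\Z/\ell_i\Z)^\times$, the field $M$ is the fixed field of $\prod_i H_i$, where $H_i\leq(\Z/\ell_i\Z)^\times$ is the subgroup of index $p^n$; hence
\[
V:=\Gal(M/\Q)\;\cong\;\prod_{i=1}^{t+1}\bigl((\Z/\ell_i\Z)^\times/H_i\bigr)\;\cong\;(\Z/p^n\Z)^{t+1},
\]
a free module of rank $t+1$ over $R:=\Z/p^n\Z$.

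Next I would reduce to a module-theoretic statement. Every $q\in\Sigma$ is distinct from each $\ell_i$ (since $\ell_i\notin\Sigma$), hence unramified in $\Q(\mu_m)\supseteq M$; write $v_q\in V$ for the image of its Frobenius. It suffices to produce a surjection of $R$-modules $\phi\colon V\twoheadrightarrow R$ with $\phi(v_q)=0$ for all $q\in\Sigma$, because then $L:=M^{\ker\phi}$ satisfies $\Gal(L/\Q)\cong R=\Z/p^n\Z$, is contained in $\Q(\mu_m)$, and has $\Frob_q$ trivial in $\Gal(L/\Q)$ for each $q\in\Sigma$, i.e.\ all such $q$ split in $L$. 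Let $U\subseteq V$ be the $R$-submodule generated by $\{v_q:q\in\Sigma\}$; since $\#\Sigma=t$, Nakayama's lemma shows $U$ is generated by at most $t$ elements over $R$. Producing $\phi$ as above amounts to exhibiting a surjection $V/U\twoheadrightarrow\Z/p^n\Z$, equivalently (as $V/U$ is $p^n$-torsion) to showing $p^{n-1}V\not\subseteq U$.

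The only real point, and the step I expect to be the crux, is this last non-containment. Here I would use that in $V\cong(\Z/p^n\Z)^{t+1}$ one has $p^{n-1}V=V[p]\cong\F_p^{t+1}$. If $p^{n-1}V\subseteq U$, then, being killed by $p$, it lies in $U[p]$; writing $U\cong\bigoplus_{j=1}^{d}\Z/p^{c_j}\Z$ for the elementary divisor decomposition, $d$ equals the minimal number of generators of $U$ and $U[p]\cong\F_p^{d}$, so that $t+1=\dim_{\F_p}p^{n-1}V\leq\dim_{\F_p}U[p]=d\leq t$, a contradiction. Hence $p^{n-1}V\not\subseteq U$, and composing a surjection $V/U\twoheadrightarrow\Z/p^n\Z$ with $V\twoheadrightarrow V/U$ gives the desired $\phi$, which completes the proof. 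I want to stress that the subtlety is not the existence of \emph{some} nonzero functional on $V$ killing the $v_q$---that is routine linear algebra over $\F_p$---but rather forcing the cyclic quotient to have order \emph{exactly} $p^n$; the inequality $d\geq t+1$, coming from $V[p]\subseteq U[p]$, is precisely what prevents the quotient from collapsing to $\Z/p^{n'}\Z$ with $n'<n$, and it is exactly here that the extra prime $\ell_{t+1}$ (giving $V$ rank $t+1$ against only $t$ splitting conditions) is used.
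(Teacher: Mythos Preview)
Your proof is correct and follows essentially the same approach as the paper's: both construct the ambient $(\Z/p^n\Z)^{t+1}$-extension inside $\Q(\mu_m)$, let $U$ (the paper's $H$) be the subgroup generated by the $t$ Frobenius elements, and use the dimension count $\dim_{\F_p} U[p]\leq t<t+1=\dim_{\F_p} p^{n-1}V$ to produce a $\Z/p^n\Z$-quotient killing $U$. The only cosmetic difference is that the paper builds the kernel explicitly (choosing a $t$-dimensional subspace $N\supseteq H_{n-1}$ inside $G_{n-1}=p^{n-1}G$ and pulling it back to $\tilde{N}$), whereas you phrase the same step abstractly as the equivalence ``$V/U$ surjects onto $\Z/p^n\Z$ $\Leftrightarrow$ $p^{n-1}V\not\subseteq U$''; the underlying linear-algebra fact is identical.
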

 \begin{proof}
 Note that there is a Galois extension $E/\Q$ contained in $\Q(\mu_m)$ such that 
 \[\op{Gal}(E/\Q)\simeq \left(\Z/p^n \Z \right)^{t+1}.\] Consider the subgroup $H$ of $\op{Gal}(E/\Q)$ generated by the Frobenius elements at the $t$ primes in $\Sigma$. We show that there is a quotient of $\op{Gal}(E/\Q)/H$ isomorphic to $\Z/p^n\Z$. From this, it shall follow that there is a $\Z/p^n\Z$-extension $L/\Q$ contained in $E$ in which all primes $q\in \Sigma$ split.
 \par Set $G:=\op{Gal}(E/\Q)\simeq \left(\Z/p^n \Z \right)^{t+1}$ and for $k>0$, let $G_k$ be the subgroup generated by $p^k$-th multiples of the generators of $G$. In other words, $G_k$ corresponds to the subgroup $\left(p^k\Z/p^n \Z \right)^{t+1}$ of $\left(\Z/p^n \Z \right)^{t+1}$. Set $H_k:=G_k\cap H$, and note that $H_{n-1}$ is a $\Z/p\Z$-vector space of dimension $\leq t$. On the other hand, $G_{n-1}$ is a $\Z/p\Z$-vector space of dimension $t+1$. Let $N$ be a subspace of $G_{n-1}$ of dimension $t$ containing $H_{n-1}$, and let $\tilde{N}$ be the subgroup of $G$ consisting of all elements $g\in G$ such that $p^{n-1} g$ is contained in $N$. It is easy to see that $\tilde{N}$ contains $H$ and the quotient $G/\tilde{N}$ is isomorphic to $\Z/p^n\Z$.
 \end{proof}
\begin{Definition}\label{definition sn} Let $E_{/\Q}$ be an elliptic curve of conductor $N$ and $p$ a prime. For $n\in \Z_{\geq 1}$, and $x>0$, let $\mathfrak{S}_n$ be the set of primes $\ell\nmid N$ such that
 \begin{enumerate}
     \item $\ell\equiv 1\mod{p^n}$,
     \item $p\nmid \# \widetilde{E}(\F_\ell)$.
 \end{enumerate}
 For $x>0$, let $\mathfrak{S}_n(x)$ be the set of primes $\ell\in \mathfrak{S}_n$ such that $\ell\leq x$.
 \end{Definition}
 \begin{Lemma}\label{s4 positive density}
 Let $E_{/\Q}$ be an elliptic curve and $p$ a prime such that $p\geq 5$. Assume that the residual representation $\bar{\rho}_{E,p}:\op{Gal}(\bar{\Q}/\Q)\rightarrow \op{GL}_2(\F_p)$ on $E[p]$ is surjective. For $n\in \Z_{\geq 1}$, let $\mathfrak{S}_n$ be as above.
 Then, the set of primes $\mathfrak{S}_n$ has positive density, and furthermore
 \[\lim_{x\rightarrow \infty} \frac{\#\mathfrak{S}_n(x)}{\pi(x)}=\frac{p^2-p-1}{p^{n-1} (p+1)(p-1)^2}.\]
 \end{Lemma}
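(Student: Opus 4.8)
The plan is to realise $\mathfrak{S}_n$ as a Chebotarev set inside the single number field $L_n:=\Q(E[p],\mu_{p^n})$ and to read the density off from a group-theoretic count. The first step is to express the two defining conditions in terms of $\Frob_\ell\in\Gal(L_n/\Q)$. Any prime $\ell\nmid Np$ is unramified in $L_n$; the condition $\ell\equiv 1\pmod{p^n}$ says precisely that $\Frob_\ell$ acts trivially on $\mu_{p^n}$; and, writing $a_\ell$ for the trace of Frobenius at $\ell$, the characteristic polynomial of $\bar{\rho}_{E,p}(\Frob_\ell)$ on $E[p]$ is $X^2-a_\ell X+\ell$ reduced modulo $p$, whose value at $1$ is $\#\widetilde{E}(\F_\ell)$ modulo $p$. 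Hence $p\mid\#\widetilde{E}(\F_\ell)$ if and only if $1$ is an eigenvalue of $\bar{\rho}_{E,p}(\Frob_\ell)$, equivalently $\det\bigl(\bar{\rho}_{E,p}(\Frob_\ell)-\mathrm{Id}\bigr)=0$. Thus membership in $\mathfrak{S}_n$ is detected by a conjugation-stable subset of $\Gal(L_n/\Q)$, so once that subset and the group are understood the Chebotarev density theorem finishes the job.

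Next I would pin down $G:=\Gal(L_n/\Q)$. Since $\bar{\rho}_{E,p}$ is surjective, $\Gal(\Q(E[p])/\Q)\cong\GL_2(\F_p)$, and for $p\geq 5$ the group $\SL_2(\F_p)$ is perfect, so $\GL_2(\F_p)^{\mathrm{ab}}\cong\F_p^\times$ via the determinant; consequently the maximal abelian subextension of $\Q(E[p])/\Q$ is $\Q(\mu_p)$. As $\Q(\mu_{p^n})/\Q$ is abelian and contains $\Q(\mu_p)\subseteq\Q(E[p])$, this forces $\Q(E[p])\cap\Q(\mu_{p^n})=\Q(\mu_p)$, and --- using the Weil pairing to identify the determinant of $\bar{\rho}_{E,p}$ with the cyclotomic character modulo $p$ --- Goursat's lemma gives
\[
G\;\cong\;\bigl\{(A,u)\in\GL_2(\F_p)\times(\Z/p^n\Z)^\times\;:\;\det A\equiv u\pmod{p}\bigr\},
\]
which has order $|\GL_2(\F_p)|\cdot p^{n-1}=p^{n}(p+1)(p-1)^2$. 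Under this identification the conjugation-stable subset $\mathcal{C}\subseteq G$ cut out by the two conditions above is $\mathcal{C}=\{(A,1):A\in\SL_2(\F_p),\ \det(A-\mathrm{Id})\neq0\}$: the second coordinate being $1$ forces $\det A=1$, and $\mathcal{C}$ is preserved by $G$-conjugation because $G$ surjects onto $\GL_2(\F_p)$ and the condition on $A$ is conjugacy-invariant.

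The computational heart is then the count $|\mathcal{C}|=\#\{A\in\SL_2(\F_p):1\text{ is not an eigenvalue of }A\}$. For $A\in\SL_2(\F_p)$ one has $\det(A-\mathrm{Id})=2-\op{tr}A$, so $1$ is an eigenvalue exactly when $\op{tr}A=2$; the elements of trace $2$ are $\mathrm{Id}$ together with the two nontrivial unipotent conjugacy classes, each of size $(p^2-1)/2$, for a total of $1+(p^2-1)=p^2$. Since $|\SL_2(\F_p)|=p(p^2-1)$ this gives $|\mathcal{C}|=p(p^2-1)-p^2=p(p^2-p-1)$. Applying the Chebotarev density theorem to $L_n/\Q$ then yields
\[
\lim_{x\to\infty}\frac{\#\mathfrak{S}_n(x)}{\pi(x)}=\frac{|\mathcal{C}|}{|G|}=\frac{p(p^2-p-1)}{p^{n}(p+1)(p-1)^2}=\frac{p^2-p-1}{p^{n-1}(p+1)(p-1)^2},
\]
which is positive because $p^2-p-1>0$ for every prime $p$.

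The one genuinely delicate point is establishing the structure of $G$, namely the identity $\Q(E[p])\cap\Q(\mu_{p^n})=\Q(\mu_p)$; this is exactly where the hypotheses $p\geq 5$ (so that $\SL_2(\F_p)$ is perfect) and the surjectivity of $\bar{\rho}_{E,p}$ enter. The remaining ingredients --- the translation of the conditions at $\ell$ into Frobenius data, the conjugacy-class count in $\SL_2(\F_p)$, and the appeal to Chebotarev --- are routine.
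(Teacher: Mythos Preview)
Your argument is correct and follows essentially the same route as the paper: both proofs apply Chebotarev over the composite $\Q(E[p],\mu_{p^n})$, establish $\Q(E[p])\cap\Q(\mu_{p^n})=\Q(\mu_p)$ (you via perfectness of $\SL_2(\F_p)$ for $p\geq 5$, the paper via simplicity of $\PSL_2(\F_p)$), and then count the trace-$2$ elements in $\SL_2(\F_p)$ to obtain $p^2$, hence $|\mathcal{C}|=p(p^2-1)-p^2$. Your explicit use of Goursat's lemma and the conjugacy-class decomposition of the unipotents are presentational refinements, not a different strategy.
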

 
 \begin{proof}
 Given a prime $\ell\nmid Np$, set $a_\ell:=\ell+1-\#\widetilde{E}(\F_\ell)$. Let $\Q(E[p])$ be the extension of $\Q$ which is fixed by the kernel of $\bar{\rho}_{E,p}$. The residual representation gives a natural isomorphism
 \[\op{Gal}(\Q(E[p])/\Q)\xrightarrow{\sim} \op{GL}_2(\F_p).\] Note that the determinant of $\bar{\rho}=\bar{\rho}_{E,p}$ is the mod-$p$ cyclotomic character, hence, \begin{itemize}
     \item $\Q(E[p])$ contains $\Q(\mu_p)$, 
     \item $\bar{\rho}$ induces an isomorphism 
     \[\op{Gal}(\Q(E[p])/\Q(\mu_p))\xrightarrow{\sim} \op{SL}_2(\F_p).\]
 \end{itemize}The prime $\ell$ is unramified in $\Q(E[p])$ and let $\sigma_\ell$ is the Frobenius in $\op{Gal}(\Q(E[p])/\Q)$. The trace and determinant of $\bar{\rho}(\sigma_\ell)$ are as follows,
 \[\op{trace}\left(\bar{\rho}(\sigma_\ell)\right)=a_\ell\text{ and }\op{det}\left(\bar{\rho}(\sigma_\ell)\right)=\ell.\]
 Let $\mathcal{S}'$ be the subset of $\op{Gal}(\Q(E[p])/\Q)$ consisting of $\sigma$ such that
 \[\op{trace}\left(\bar{\rho}(\sigma)\right)\neq 2\text{ and }\op{det}\left(\bar{\rho}(\sigma)\right)=1.\]
 Note that $\sigma_\ell\in \mathcal{S}'$ if and only if $\ell\equiv 1\mod{p}$ and $p\nmid \#\widetilde{E}(\F_\ell)$. On the other hand, C.Jordan showed that $\op{PSL}_2(\F_p)$ is simple, see \cite{jordan}. As a result, $\op{SL}_2(\F_p)$ does not contain any normal subgroup $N$ such that $p$ divides $[\op{SL}_2(\F_p):N]$. Hence the intersection $\Q(E[p])\cap \Q(\mu_{p^n})$ is equal to $\Q(\mu_p)$. Note that for $\sigma\in \mathcal{S}'$, the determinant of $\bar{\rho}(\sigma)$ is equal to $1$, and as a result, $\mathcal{S}'$ is a subset of $\op{Gal}(\Q(E[p])/\Q(\mu_p))$. Let $\Q(E[p], \mu_{p^n})$ be the composite $\Q(E[p])\cdot \Q(\mu_{p^n})$ and note that 
 \[\op{Gal}(\Q(E[p], \mu_{p^n})/\Q(\mu_p))\simeq \op{Gal}\left(\Q(E[p])/\Q(\mu_p)\right)\times \op{Gal}\left(\Q(\mu_{p^n})/\Q(\mu_p)\right).\]
 Let $\mathcal{S}''$ be the subset of $\op{Gal}(\Q(E[p], \mu_{p^n})/\Q(\mu_p))$ such that the projection to $\op{Gal}\left(\Q(E[p])/\Q(\mu_p)\right)$ lies in $\mathcal{S}'$ and the projection to $\op{Gal}\left(\Q(\mu_{p^n})/\Q(\mu_p)\right)$ is trivial. Note that $\sigma_\ell\in \mathcal{S}''$ if and only if 
 \[\ell\equiv 1\mod{p^n}\text{ and } p\nmid \#\widetilde{E}(\F_\ell).\]
 
Note that $\#\mathcal{S}''=\mathcal{S}'$. Thus according to the Chebotarev density theorem, we have that 
 \[\lim_{x\rightarrow \infty} \frac{\#\mathfrak{S}_n(x)}{\pi(x)}=\frac{\#\mathcal{S}'}{p^{n-1}\left(\#\op{GL}_2(\F_p)\right)}>0.\]
 In order to compute $\#\mathcal{S}'$, we need to compute the cardinality of the set of all matrices 
 \[\mtx{a}{b}{c}{d}\in \op{SL}_2(\F_p)\] with trace $2$ and determinant $1$. An elementary matrix calculation shows that \[\#\mathcal{S}'=\# \op{SL}_2(\F_p)-p^2=(p^2-1)p-p^2=p^3-p^2-p.\]
 Putting it all together we have
 \[\lim_{x\rightarrow \infty} \frac{\#\mathfrak{S}_n(x)}{\pi(x)}=\frac{p^2-p-1}{p^{n-1} (p^2-1)(p-1)}.\]
 
 \end{proof}
 
 \begin{proof}[proof of Theorem \ref{thm section 4 main}]
 \par We begin by stating what needs to be proved. We are given a prime $p\geq 11$ contained in $\mathcal{P}_E$. Recall that according to Definition \ref{def PE}, this means that the following conditions hold:
 \begin{enumerate}
     \item $E[p]$ is irreducible as a $\op{Gal}(\bar{\Q}/\Q)$-module,
    \item $E$ has good ordinary reduction at $p$,
    \item $\op{Sel}_{p^\infty}(E/\Q_\infty)=0$.
 \end{enumerate}
 Given $n\geq 1$ and a finite set of primes $\Sigma$, we show that there are infinitely many $\Z/p^n\Z$-extensions $L/\Q$ such that \begin{enumerate}
    \item all primes in $\Sigma$ split in $L$,
     \item $E(L)=E(\Q)$,
     \item $\Sha(E/L)[p^\infty]=\Sha(E/\Q)[p^\infty]=0$.
 \end{enumerate}
 
 Let $t:=\# \Sigma$ and $\mathfrak{S}_n$ be as in Definition \ref{definition sn}. By Lemma \ref{s4 positive density}, the set $\mathfrak{S}_n$ has positive density. All that is required to prove the result is that $\mathfrak{S}_n$ is infinite. Let $\ell_1, \dots, \ell_{t+1}$ be a set of primes in $\mathfrak{S}_n$, and set $m:=\prod_{i=1}^{t+1} \ell_i$. According to Lemma \ref{s4 splitting}, there is a $\Z/p^n\Z$ extension $L/\Q$ such that 
 \begin{enumerate}
     \item all the primes in $\Sigma$ split completely,
     \item $L/\Q$ is ramified outside $\{\ell_i\mid i=1,\dots, t+1\}$.
 \end{enumerate}Given a set of primes $\Omega=\{\ell_1, \dots, \ell_{t+1}\}\subset \mathfrak{S}_n$, let $L_{\Omega}$ be a choice of such a $\Z/p^n\Z$-extension satisfying the above properties. Clearly, there are infinitely many choices of mutually disjoint sets $\Omega$, and hence infinitely fields $L_{\Omega}$. The set of primes that ramify in $L_{\Omega}$ is contained in $\Omega$ which are primes of good reduction of $E$. Said differently, all the primes of $Q_1$ are unramified in $L_{\Omega}$. Furthermore, for each prime $\ell\in \mathfrak{S}_n$, we have arranged that $p\nmid \#\widetilde{E}(\F_\ell)$. Hence, each prime of $Q_2$ is also unramified in $L_{\Omega}$. Hence, it follows from Lemma \ref{stability lemma} that for $L=L_{\Omega}$, both of the following conditions are satisfied
 \begin{enumerate}
     \item $E(L)=E(\Q)$,
     \item $\Sha(E/L)[p^\infty]=\Sha(E/\Q)[p^\infty]=0$.
 \end{enumerate}
 This completes the proof.

 \end{proof}
 
 We conclude this section with a refinement of Theorem \ref{thm section 4 main} for elliptic curves $E_{/\Q}$ for which $E(\Q)_{\op{tors}}\neq 0$.
 \begin{Lemma}\label{boring lemma}
 Let $E_{/\Q}$ be an elliptic curve without CM and assume that
 \begin{enumerate}
     \item $E(\Q)_{\op{tors}}\neq 0$,
     \item $\Sha(E/\Q)$ is finite.
 \end{enumerate} Then $\mathcal{P}_E$ consists of all primes $p$ at which $E$ has good ordinary reduction \emph{except} a finite set of primes $\mathcal{P}_E'$. The set $\mathcal{P}_E'$ consists of a subset of the primes $p$ for which the following conditions are satisfied
 \begin{enumerate}
     \item\label{c1} $p\leq 5$ or $p\mid N$,
     \item $E[p]$ is reducible as a Galois module,
    \item\label{c3} $p\mid \#\Sha(E/\Q)$.
     \item\label{c4} $\# E(\Q)_{\op{tors}}$ is of $p$-power order.
 \end{enumerate}
 \end{Lemma}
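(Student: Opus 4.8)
The plan is to unwind Definition~\ref{def PE} and then feed it into the chain of equivalences already assembled in the proof of Theorem~\ref{100 percent}. A prime $p$ at which $E$ has good ordinary reduction fails to belong to $\mathcal{P}_E$ in exactly one of three ways: $p=2$; $E[p]$ is reducible as a Galois module; or $p$ is odd, $E[p]$ is irreducible and $\op{Sel}_{p^\infty}(E/\Q_\infty)\neq 0$. The first possibility is absorbed by condition~\eqref{c1}. The second is condition~(2), and it occurs for only finitely many $p$: since $E$ has no complex multiplication, Mazur's theorem on rational isogenies forces $E[p]$ to be irreducible whenever $p\notin\{2,3,5,7,11,13,17,19,37,43,67,163\}$. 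So the whole question reduces to the third possibility, for primes $p\geq 7$ of good ordinary reduction with $E[p]$ irreducible.

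For such a prime, the equivalences recalled in the proof of Theorem~\ref{100 percent} --- \cite[Corollary 3.6]{kunduray1} together with the Euler characteristic formula of Corollary~\ref{cor ECF} --- say that $\op{Sel}_{p^\infty}(E/\Q_\infty)=0$ if and only if $p\nmid\#\Sha(E/\Q)$, $p\nmid c_\ell(E)$ for every prime $\ell\neq p$, and $p\nmid\#\widetilde{E}(\F_p)$. I would treat the three ways this can fail one at a time. Since $\Sha(E/\Q)$ is finite by hypothesis, only finitely many $p$ satisfy $p\mid\#\Sha(E/\Q)$, and these are exactly the primes recorded in condition~\eqref{c3}. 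If $p\mid c_\ell(E)$ for some $\ell\neq p$, then $E$ has bad reduction at $\ell$, so $\ell\mid N$; as $c_\ell\leq 4$ at additive primes and $c_\ell\leq 2$ at non-split multiplicative primes, for $p\geq 7$ only split multiplicative primes $\ell$ can contribute, and since there are finitely many bad primes each with a fixed $c_\ell$, only finitely many such $p$ exist --- and one must check these are caught by condition~\eqref{c1}.

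The last possibility, $p\mid\#\widetilde{E}(\F_p)$ with $p$ of good ordinary reduction, is where the hypothesis $E(\Q)_{\op{tors}}\neq 0$ does the essential work. If $\#E(\Q)_{\op{tors}}$ were not a power of $p$, choose a prime $q\neq p$ dividing it and a rational point of order $q$; reduction modulo $p$ is injective on prime-to-$p$ torsion at a good prime, so $q\mid\#\widetilde{E}(\F_p)$, whence $pq\mid\#\widetilde{E}(\F_p)$, contradicting the Hasse bound $\#\widetilde{E}(\F_p)\leq p+1+2\sqrt{p}<2p\leq pq$ for $p\geq 7$. Therefore $\#E(\Q)_{\op{tors}}$ is a power of $p$ (condition~\eqref{c4}), and Mazur's torsion theorem shows this occurs for at most one prime $p$. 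Assembling the finitely many primes coming from the four cases shows that $\mathcal{P}_E$ omits only finitely many good ordinary primes, and that each omitted prime satisfies one of \eqref{c1}--\eqref{c4}. The step I expect to be the genuine obstacle is the control of the Tamagawa primes: one must rule out a large prime $p$ dividing $c_\ell=v_\ell(\Delta_{\min})$ at a split multiplicative prime $\ell$, which is not forced by $\ell\mid N$ alone; I would approach it by tracking the image of a rational torsion point in the component group $\Z/c_\ell\Z$ of the N\'eron model at $\ell$, or equivalently by using that a $\Q$-rational cyclic isogeny alters the valuation of the Tate parameter only by the isogeny degree, which ought to confine the offending $p$ to a small finite set.
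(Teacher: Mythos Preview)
Your approach is the paper's: reduce membership in $\mathcal{P}_E$ to the Euler-characteristic criterion recalled in the proof of Theorem~\ref{100 percent}, then case-split on which factor is divisible by $p$. For the anomalous case $p\mid\#\widetilde{E}(\F_p)$ the paper argues via the shape of the mod-$\ell$ Galois representation (for a prime $\ell\neq p$ dividing $\#E(\Q)_{\op{tors}}$) to conclude $\ell\mid\#\widetilde{E}(\F_p)$, then uses the Hasse bound to force $\#\widetilde{E}(\F_p)=p$ and obtain a contradiction; this is equivalent to your argument via injectivity of reduction on prime-to-$p$ torsion.

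The Tamagawa obstacle you single out is real, and the paper's proof does not address it either: it simply asserts that if $p$ fails \eqref{c1}--\eqref{c3} then $p\mid\#\widetilde{E}(\F_p)$, silently passing over the possibility $p\mid c_\ell$ at a split multiplicative prime $\ell$. As you observe, $p\nmid N$ does not control $v_\ell(\Delta_{\min})$, so such primes are not visibly captured by \eqref{c1}--\eqref{c4}. They are, however, finite in number (finitely many bad primes, each with a fixed Tamagawa number), so the finiteness of $\mathcal{P}_E'$ --- which is what Theorem~\ref{4.15} actually uses --- is unaffected; it is only the exact list \eqref{c1}--\eqref{c4} that is incomplete. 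In the worked example $X_0(14)$ the Tamagawa numbers are small and the issue does not arise. Your instinct to look at the image of the rational torsion point in the component group is reasonable, but there is no general reason that image is nontrivial, so I would not expect that route to close the gap without further input; the honest fix is to add ``$p\mid c_\ell(E)$ for some $\ell$'' as a fifth condition.
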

\begin{proof}
We refer to the discussion in the proof of Theorem \ref{100 percent}. A prime $p$ at which $E$ has good ordinary reduction is contained in $\mathcal{P}_E$ if and only if $p\nmid \# \Sha(E/\Q)$, $p\nmid c_\ell(E)$ for all primes $\ell\neq p$ and $p\nmid \#\widetilde{E}(\F_p)$. Suppose that $p\notin \mathcal{P}_E$ does not satisfy \eqref{c1}-\eqref{c3}, then, $p$ divides $\#\widetilde{E}(\F_p)$. Since $p\geq 7$, it follows from the Hasse bound that $\#\widetilde{E}(\F_p)=p$. Suppose that there is a prime $\ell\neq p$ that divides $\#E(\Q)_{\op{tors}}$. Denote by $\chi_\ell$ the mod-$\ell$ cyclotomic character.
Since $E(\Q)[\ell]\neq 0$, the mod-$\ell$ representation is of the form \[\bar{\rho}_{E,\ell}=\mtx{1}{\ast}{0}{\chi_{\ell}}.\]
Let $\sigma_p$ be the Frobenius at $p$.
We find that
\[
1+p-\#\widetilde{E}(\F_p)\equiv \op{trace}\left(\bar{\rho}_{E,\ell}(\sigma_p)\right)\equiv 1+\chi_\ell(p)=1+p\pmod{\ell}.
\]
Therefore, $\ell$ divides $\# \widetilde{E}(\F_p)$, a contradiction. Therefore, \eqref{c4} is satisfied, hence the result.
\end{proof}
The following Theorem follows immediately from Theorem \ref{thm section 4 main} and Lemma \ref{boring lemma}. 

\begin{Th}\label{4.15}
Let $E_{/\Q}$ be an elliptic curve without complex multiplication such that $E(\Q)_{\op{tors}}\neq 0$ and $\op{rank} E(\Q)=0$. Then $E$ is diophantine stable \emph{and} $\Sha$-stable at all but a finite set of primes $p$ at which it has good ordinary reduction.
\end{Th}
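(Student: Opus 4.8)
The plan is to deduce the statement directly from Theorem \ref{thm section 4 main} together with Lemma \ref{boring lemma}; no new ingredient is required. First I would check that the hypotheses of Lemma \ref{boring lemma} are in force: $E$ has no complex multiplication, $E(\Q)_{\op{tors}}\neq 0$ by assumption, and $\Sha(E/\Q)$ is finite, which is a standing hypothesis throughout the paper. Lemma \ref{boring lemma} then produces a finite set of primes $\mathcal{P}_E'$ such that every prime $p$ at which $E$ has good ordinary reduction and $p\notin \mathcal{P}_E'$ lies in $\mathcal{P}_E$. Finiteness of $\mathcal{P}_E'$ is exactly what the lemma guarantees: it forces $p$ into a short list of primes, since $\Sha(E/\Q)$ and $E(\Q)_{\op{tors}}$ are finite groups and, $E$ being non-CM, Serre's open image theorem makes $E[p]$ irreducible for all but finitely many $p$.

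Next I would set $S:=\mathcal{P}_E'\cup\{p : p<11\}$, which is a finite set of primes. For any prime $p$ at which $E$ has good ordinary reduction with $p\notin S$, we have simultaneously $p\geq 11$ and $p\in\mathcal{P}_E$; since $\op{rank}E(\Q)=0$ is also assumed, Theorem \ref{thm section 4 main} applies verbatim and shows that $E$ is diophantine stable \emph{and} $\Sha$-stable at $p$. This exhibits the desired exceptional set, contained in the finite set $S$, and finishes the proof.

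Since the assertion is a formal corollary of two results already established, there is no genuine obstacle here. The only point that takes a little bookkeeping is the exceptional set: to make it completely explicit (as in the claim that $X_0(14)$ is diophantine and $\Sha$-stable at all primes $p\geq 11$ of good ordinary reduction), one would intersect the explicit conditions (1)--(4) of Lemma \ref{boring lemma} with the explicit description of the complement of $\mathcal{P}_E$ coming from Theorem \ref{100 percent}, and then list the finitely many bad primes directly.
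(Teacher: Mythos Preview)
Your proposal is correct and matches the paper's own argument exactly: the paper simply states that Theorem \ref{4.15} follows immediately from Theorem \ref{thm section 4 main} and Lemma \ref{boring lemma}, and your write-up supplies precisely that deduction with the bookkeeping made explicit. Your handling of the finite exceptional set $S=\mathcal{P}_E'\cup\{p<11\}$ and the remark about making it explicit for $X_0(14)$ are also in line with the paper's discussion.
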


\par \emph{Example:} Let us illustrate the above result through an explicit example. Consider the elliptic curve $E$ with Cremona label \href{https://www.lmfdb.org/EllipticCurve/Q/14a1/}{14a1}, this is the modular curve $X_0(14)$. Let us note down a few properties of $E$,
\begin{enumerate}
    \item $E$ is a non-CM elliptic curve with rank 0.
    \item At all primes $p\geq 5$ the Galois representation on $E[p]$ is surjective (hence irreducible).
    \item The Tate Shafarevich group $\Sha(E/\Q)=0$.
    \item The group $E(\Q)=\Z/6\Z$, hence, is not of prime power order. 
\end{enumerate}
It follows from Lemma \ref{boring lemma} and Corollary \ref{4.15} that $E$ is diophantine stable at all primes $p\geq 11$ at which $E$ has good ordinary reduction.
\section{Rank jumps and Growth of the Tate-Shafarevich group on average}\label{s5}
\par In the previous section, it was shown that a rational elliptic curve of rank zero without complex multiplication is diophantine stable at $100\%$ of the primes. Furthermore, a similar result was proven for stability of the $p$-primary part of the Tate-Shafarevich group as $p$ varies. In this section, we study the  average growth of ranks and Tate-Shafarevich groups in cyclic $p$-extensions. As in the previous section, results are proved for a fixed elliptic curve $E_{/\Q}$ and varying prime $p$.
\par First, we extend Definition \ref{def PE} to elliptic curves of arbitrary rank. Throughout, it is assumed that the $\Sha(E/\Q)$ is finite and that the $p$-adic regulator $\mathcal{R}_p(E/\Q)$ is non-zero.
\begin{Definition}\label{def 5.1}
For an elliptic curve $E_{/\Q}$, let $\mathcal{P}_E$ consist of the primes $p$ such that
\begin{enumerate}
    \item $p$ is odd,
    \item $E[p]$ is irreducible as a $\op{Gal}(\bar{\Q}/\Q)$-module,
    \item $E$ has good ordinary reduction at $p$,
    \item the equivalent conditions of Lemma \ref{lemma ECF mulambda} are satisfied.
\end{enumerate}
\end{Definition}
Note that when $\op{rank} E(\Q)=0$ and $E(\Q)[p]=0$, the Selmer group $\op{Sel}_{p^\infty}(E/\Q_\infty)$ does not contain any proper finite index submodules, see \cite[Proposition 4.14]{greenbergITEC}. If the $\mu$ and $\lambda$-invariants vanish, then, $\op{Sel}_{p^\infty}(E/\Q_\infty)$ is finite, and hence $0$. Consequently, the above definition coincides with Definition \ref{def PE} when $\op{rank} E(\Q)=0$. It is expected that even in the positive rank setting, the set $\mathcal{P}_E$ consists of $100\%$ of primes, see \cite[Conjecture 1.1]{kunduray1}. Indeed, this is the case if the normalized $p$-adic regulator $\mathcal{R}_p(E/\Q)$ is a $p$-adic unit for $100\%$ of primes $p$, and there is computational evidence for this (see Theorem 3.13 and the subsequent discussion in \emph{loc. cit.}).

\par The main result in this section for elliptic curves of rank $0$ states that for $100\%$ of the primes $p$, the $p$-primary Selmer group exhibits growth for base-change by a large number of $p$-cyclic extensions.
\begin{Th}\label{5.1 main theorem}
Let $E$ be an elliptic curve without complex multiplication such that $\op{rank} E(\Q)=0$ and let $p\in \mathcal{P}_E$. Assume furthermore that the residual representation 
\[\bar{\rho}:\op{Gal}(\bar{\Q}/\Q)\rightarrow\op{GL}_2(\F_p)\] is surjective. Then, for any $n\in \Z_{\geq 1}$ and a finite set of primes $\Sigma$, there are infinitely many $\Z/p^n\Z$-extensions $L/\Q$ such that 
\begin{enumerate}
    \item all primes of $\Sigma$ split in $L$,
    \item $\op{Sel}_{p^\infty}(E/\Q)=0$ and $\op{Sel}_{p^\infty}(E/\Q)\neq 0$.
\end{enumerate}
\end{Th}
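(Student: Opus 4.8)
The plan is to realize the required $\Z/p^n\Z$-extensions $L/\Q$ as the unique degree-$p^n$ subfields $K_q\subseteq\Q(\mu_q)$ attached to a single auxiliary prime $q$, chosen by a Chebotarev argument so that $p\mid\#\widetilde E(\F_q)$ and every prime of $\Sigma$ splits in $K_q$; the prime $q$ then forces the cyclotomic $\lambda$-invariant to grow via the Hachimori--Matsuno formula, and this growth is converted into nonvanishing of the Selmer group over $L$ through the Euler characteristic formula. (The second clause of condition (2) is to be read as $\op{Sel}_{p^\infty}(E/\Q)=0$ and $\op{Sel}_{p^\infty}(E/L)\neq0$.) Fix $n\geq1$ and a finite set $\Sigma=\{s_1,\dots,s_r\}$ of primes. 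Since $p\in\mathcal P_E$ and $\op{rank}E(\Q)=0$, the list of equivalences in the proof of Theorem \ref{100 percent}, together with Remark \ref{remark}, gives $\mu_p(E/\Q)=\lambda_p(E/\Q)=0$, $p\nmid\#\widetilde E(\F_p)$, $p\nmid c_\ell(E)$ for all primes $\ell\neq p$, $E(\Q)[p]=0$, and $\op{Sel}_{p^\infty}(E/\Q)=0$; so it is enough to produce infinitely many $\Z/p^n\Z$-extensions $L/\Q$ in which the primes of $\Sigma$ split and for which $\op{Sel}_{p^\infty}(E/L)\neq0$.

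For the choice of $q$, note that a prime $s_i$ splits completely in $K_q$ if and only if $s_i$ is a $p^n$-th power modulo $q$, equivalently if and only if $q$ splits completely in $\Q(\mu_{p^n},s_i^{1/p^n})$. Set $F:=\Q(\mu_{p^n},s_1^{1/p^n},\dots,s_r^{1/p^n})$ and $M:=\Q(E[p])\cdot F$. I would apply the Chebotarev density theorem in $\op{Gal}(M/\Q)$ to find a positive density of primes $q\nmid Np$ that split completely in $F$ (so in particular $q\equiv1\bmod p^n$, whence $\det\bar\rho(\Frob_q)=1$) and for which $\Frob_q$ has trace $2$ in $\op{Gal}(\Q(E[p])/\Q)\cong\op{GL}_2(\F_p)$; the trace condition is precisely $p\mid\#\widetilde E(\F_q)$. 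That the corresponding set of Frobenius classes is nonempty follows from the fact that the natural map $\op{Gal}(M/\Q)\to\op{Gal}(\Q(E[p])/\Q)\times_{\op{Gal}(\Q(\mu_p)/\Q)}\op{Gal}(F/\Q)$ is an isomorphism, which rests on $\Q(E[p])\cap F=\Q(\mu_p)$; the latter holds because $\op{Gal}(\Q(E[p])/\Q(\mu_p))\cong\op{SL}_2(\F_p)$ is perfect (here $p\geq5$) while $\op{Gal}(F/\Q(\mu_p))$ is a $p$-group, so the two groups have no common nontrivial quotient. For any such $q$ put $L:=K_q$; then $L/\Q$ is cyclic of degree $p^n$, ramified only at $q\nmid Np$ (so $L\cap\Q_\infty=\Q$), and every prime of $\Sigma$ splits in $L$. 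Distinct primes $q$ produce distinct fields $L$.

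Next I would verify $\lambda_p(E/L)>0$. Let $\tilde w$ be a prime of $L_\infty$ above $q$ and $\tilde\ell$ the prime of $\Q_\infty$ below it. Since $q\neq p$ and $E$ has good reduction at $q$, the reduction map induces an isomorphism $E(L_{\infty,\tilde w})[p]\xrightarrow{\sim}\widetilde E(k_{\tilde w})[p]$, which is nonzero because $\widetilde E(\F_q)[p]\neq0$; hence $\tilde w\in P_2$. Moreover $q$ is totally ramified in $L/\Q$ and unramified in $\Q_\infty/\Q$, so $e(\tilde w/\tilde\ell)=p^n$. All bad primes of $E$ are unramified in $L$, so $P_1$ contributes nothing, and Theorem \ref{kida thm}, applied with $\mu_p(E/\Q)=\lambda_p(E/\Q)=0$, yields $\mu_p(E/L)=0$ and $\lambda_p(E/L)\geq2(p^n-1)>0$. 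In particular $\op{Sel}_{p^\infty}(E/L_\infty)$ has positive $\Lambda$-corank.

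Finally, I would deduce $\op{Sel}_{p^\infty}(E/L)\neq0$, and this is the step I expect to be the main obstacle. Suppose instead $\op{Sel}_{p^\infty}(E/L)=0$; then $\op{rank}E(L)=0$, $\Sha(E/L)[p^\infty]=0$, and $E(L)[p]=0$ (Lemma \ref{NSW lemma}), so Theorem \ref{th ECF} applies to $E_{/L}$. By Lemma \ref{lemma ECF mulambda}, the relations $\mu_p(E/L)=0$ and $\lambda_p(E/L)>0=\op{rank}E(L)$ force $p\mid\chi_t(\Gamma_L,E[p^\infty])$. But Corollary \ref{cor ECF} expresses $\chi_t(\Gamma_L,E[p^\infty])$, up to a $p$-adic unit, as $\mathcal R_p(E/L)\cdot\#\Sha(E/L)[p^\infty]\cdot\prod_{v\nmid p}c_v^{(p)}(E/L)\cdot\prod_{v\mid p}\bigl(\#\widetilde E(\kappa_v)[p^\infty]\bigr)^2$, in which $\mathcal R_p(E/L)=1$, $\#\Sha(E/L)[p^\infty]=1$, the factors at $v\mid p$ are trivial because $\kappa_v=\F_{p^f}$ with $f$ a power of $p$ and $p\nmid\#\widetilde E(\F_p)$ forces $p\nmid\#\widetilde E(\F_{p^f})$ (ordinary reduction and $\gcd(f,p-1)=1$), and the Tamagawa factors at $v\nmid p$ are prime to $p$ since the bad primes of $E_{/L}$ lie over bad primes $\ell\mid N$ of $E_{/\Q}$, which are unramified in $L$, so that $p\nmid c_\ell(E)$ and $p\geq5$ give $p\nmid c_v^{(p)}(E/L)$. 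Thus $\chi_t(\Gamma_L,E[p^\infty])$ is prime to $p$, a contradiction; hence $\op{Sel}_{p^\infty}(E/L)\neq0$. The delicate point is exactly this: positivity of the cyclotomic $\lambda$-invariant does not by itself yield a nonzero Selmer group over $L$, and one must use the full strength of the hypothesis $p\in\mathcal P_E$ to rule out the scenario in which the growth of $\lambda_p(E/L)$ is absorbed entirely by the local and Tamagawa contributions to the truncated Euler characteristic.
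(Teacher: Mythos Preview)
Your proof is correct and shares the paper's core strategy: force a ramified prime $q$ with $p\mid\#\widetilde E(\F_q)$ so that Hachimori--Matsuno gives $\lambda_p(E/L)>0$, then use the Euler characteristic formula to rule out $\op{Sel}_{p^\infty}(E/L)=0$. Your final contradiction argument is exactly the content of the paper's Proposition \ref{boring prop}, and your Tamagawa bookkeeping matches Lemma \ref{boringlemma}.

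Where you genuinely diverge is in the construction of $L$. The paper takes $t+1$ primes $\ell_1,\dots,\ell_{t+1}$ from the set $\mathfrak T_n=\{\ell\nmid Np:\ell\equiv1\bmod p^n,\ p\mid\#\widetilde E(\F_\ell)\}$ (positive density by the argument of Lemma \ref{s4 positive density}, applied to the trace-$2$ locus in $\op{SL}_2(\F_p)$), and then invokes the pigeonhole Lemma \ref{s4 splitting} inside $\Q(\mu_{\ell_1\cdots\ell_{t+1}})$ to split $\Sigma$. You instead use a \emph{single} prime $q$, enlarging the Chebotarev field to $M=\Q(E[p])\cdot\Q(\mu_{p^n},s_1^{1/p^n},\dots,s_r^{1/p^n})$ so that the splitting of $\Sigma$ in $K_q$ is encoded by the Kummer condition ``$q$ splits in $F$''. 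This buys you a cleaner parametrization (one prime per extension, $L$ ramified at exactly one place) at the cost of a slightly more delicate disjointness check $\Q(E[p])\cap F=\Q(\mu_p)$; your argument for that via perfectness of $\op{SL}_2(\F_p)$ is fine for $p\geq5$, which is also the range in which the paper's parallel argument (simplicity of $\op{PSL}_2(\F_p)$) operates. The paper's two-step approach has the advantage of reusing verbatim the machinery already built in Section \ref{s4}, while yours is more self-contained.
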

\par We obtain an independent result for elliptic curves of positive rank as well. Note that this is the first time in this paper that elliptic curves of positive rank are studied.

\begin{Th}\label{5.1 theorem pos rank}
Let $E$ be an elliptic curve without complex multiplication such that $\op{rank} E(\Q)>0$ and let $p\in \mathcal{P}_E$. Let $L$ be \textit{any} cyclic $p$-extension. Then, at least one of the following assertions hold:
\begin{enumerate}
    \item\label{5.3 c1} $\op{rank}E(L)\geq [L:\Q]\op{rank} E(\Q)$,
    \item\label{5.3 c2} $\mathcal{R}_p(E/\Q)\in\Z_p^\times$ and $\mathcal{R}_p(E/L)\notin \Z_p^\times$,
    \item\label{5.3 c3}
    $\Sha(E/\Q)[p^\infty]=0$ and $\Sha(E/L)[p^\infty]\neq 0$.
\end{enumerate}
\end{Th}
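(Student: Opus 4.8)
The plan is to translate both sides into Iwasawa invariants through the truncated Euler characteristic and then feed in the Hachimori--Matsuno formula. Throughout I take $L\cap\Q_\infty=\Q$, which is the setting in which Theorem~\ref{kida thm} applies.

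\emph{Consequences of $p\in\mathcal{P}_E$.} By Definition~\ref{def 5.1} and Lemma~\ref{lemma ECF mulambda}, $\mu_p(E/\Q)=0$, $\lambda_p(E/\Q)=\op{rank}E(\Q)=:r$, and $\chi_t(\Gamma_\Q,E[p^\infty])=1$. Since $\chi_t(\Gamma_\Q,E[p^\infty])$ is a power $p^m$ with $m\ge0$ and, by Corollary~\ref{cor ECF}, equals up to a $p$-adic unit a product of $\Z_p$-integral factors of nonnegative valuation (namely $\mathcal R_p(E/\Q)$, $\#\Sha(E/\Q)[p^\infty]$, $\prod_\ell c_\ell^{(p)}(E/\Q)$, $(\#\widetilde E(\F_p)[p^\infty])^2$), every factor is a unit; thus $\mathcal R_p(E/\Q)\in\Z_p^\times$, $\Sha(E/\Q)[p^\infty]=0$, $p\nmid c_\ell(E/\Q)$ for all $\ell$, and $p\nmid\#\widetilde E(\F_p)$. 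In particular the first clauses of (2) and (3) hold automatically, so it suffices to show: $\op{rank}E(L)\ge[L:\Q]r$, or $\mathcal R_p(E/L)\notin\Z_p^\times$, or $\Sha(E/L)[p^\infty]\neq0$.

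\emph{The Euler-characteristic dichotomy over $L$.} As $\mu_p(E/\Q)=0$, Theorem~\ref{kida thm} gives $\mu_p(E/L)=0$ and $\lambda_p(E/L)=[L:\Q]r+\Delta$ with $\Delta:=\sum_{\tilde w\in P_1}(e(\tilde w/\tilde\ell)-1)+2\sum_{\tilde w\in P_2}(e(\tilde w/\tilde\ell)-1)\ge0$; so $\lambda_p(E/L)\ge[L:\Q]r\ge\op{rank}E(L)$, the last step by Greenberg. Now suppose the last two alternatives both fail: then $\op{Reg}_p(E/L)\neq0$ and $\Sha(E/L)[p^\infty]$ is finite, so Theorem~\ref{th ECF} and Corollary~\ref{cor ECF} apply over $L$ (good ordinary reduction above $p$ holds, and $E(L)[p]=0$ by $E(\Q)[p]=0$ and Lemma~\ref{NSW lemma}). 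They give $\op{ord}_{T=0}f_L=\op{rank}E(L)$ and
\[
\chi_t(\Gamma_L,E[p^\infty])\ \sim\ \mathcal R_p(E/L)\cdot\#\Sha(E/L)[p^\infty]\cdot\prod_{v\nmid p}c_v^{(p)}(E/L),
\]
the $v\mid p$ factor having vanished because $\kappa_v=\F_{p^{f_v}}$ with $f_v\mid[L:\Q]$ a power of $p$, whence $p\nmid\#\widetilde E(\F_p)$ and ordinarity force $p\nmid\#\widetilde E(\kappa_v)$. If $\chi_t(\Gamma_L,E[p^\infty])=1$, then Lemma~\ref{lemma ECF mulambda} (with $\mu_p(E/L)=0$) gives $\op{rank}E(L)=\lambda_p(E/L)\ge[L:\Q]r$, i.e.\ alternative (1). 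If $\chi_t(\Gamma_L,E[p^\infty])\neq1$, then — the other two factors being units — we must have $p\mid c_v(E/L)$ for some $v\nmid p$.

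\emph{The remaining case and the main obstacle.} Since $p\nmid c_\ell(E/\Q)$, a Tamagawa number over $L$ at $v\mid\ell$ with $\ell\neq p$ is divisible by $p$ only if $E$ has split multiplicative reduction at $\ell$ and $\ell$ ramifies in $L$, in which case $c_v(E/L)=e(v/\ell)\,c_\ell(E/\Q)$, so $v_p(c_v(E/L))=v_p(e(v/\ell))$; such an $\ell$ also forces primes of $L_\infty$ above it into $P_1$ (and into $P_2$ when $\ell\equiv1\bmod p$), so $\Delta>0$. Moreover, when $r\ge1$ and the Mordell--Weil rank does not grow, the local-class-field-theory identity $\langle P,Q\rangle_{p,L}=[L:\Q]\,\langle P,Q\rangle_{p,\Q}$ for $P,Q\in E(\Q)$ yields $v_p(\op{Reg}_p(E/L))=\op{rank}E(L)+r\,v_p([L:\Q])>\op{rank}E(L)$, hence $\mathcal R_p(E/L)\notin\Z_p^\times$ and alternative (2) holds. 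The delicate point — the main obstacle — is the intermediate situation in which split multiplicative primes ramify in $L$ \emph{and} the rank grows partially: one must follow the characteristic ideal of $\op{Sel}_{p^\infty}(E/L_\infty)$ through the descent exact sequence behind Theorem~\ref{kida thm} closely enough to control $\deg g_L$ (not just $v_p(g_L(0))$), matching the local correction factors at the ramified primes against the $(e-1)$ and $2(e-1)$ contributions to $\Delta$ together with the $p$-adic heights of the new rational points, so as to land in one of (1)--(3) in every case. Everything else in the argument is formal.
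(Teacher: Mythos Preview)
Your approach through the case $\chi_t(\Gamma_L,E[p^\infty])=1$ coincides with the paper's proof: assume alternatives (2) and (3) both fail, deduce $\chi_t(\Gamma_L,E[p^\infty])=1$, apply Lemma~\ref{lemma ECF mulambda} to get $\lambda_p(E/L)=\op{rank}E(L)$, and conclude (1) from \eqref{kidaformula}.

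The gap in your proposal is precisely the one you flag yourself: the branch $\chi_t(\Gamma_L,E[p^\infty])>1$, forced when a prime of split multiplicative reduction for $E$ ramifies in $L$ (so that $p\mid c_v(E/L)$ at some $v\nmid p$). Your height-pairing argument covers only the sub-case where the Mordell--Weil rank does not grow at all, and you explicitly leave the partial-growth situation as an unresolved ``main obstacle''. As written, that is an incomplete proof.

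It is worth noting that the paper's own argument never enters this branch: it asserts $\chi_t(\Gamma_L,E[p^\infty])=1$ directly, ``by the same reasoning as in the proof of Proposition~\ref{boring prop}''. That reasoning, however, controls the Tamagawa factor via Lemma~\ref{boringlemma}, whose hypothesis (2) --- that $E$ have good reduction at every prime $\ell\neq p$ ramified in $L$ --- is not stated in Theorem~\ref{5.1 theorem pos rank}. So the case you isolate is one the paper's proof does not visibly handle either. If one reads the theorem with that good-reduction hypothesis on the ramified primes implicitly in force (as it is in the surrounding results), then your $\chi_t>1$ branch is vacuous and your argument, like the paper's, is already complete; otherwise the gap you identified is shared.
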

 Thus for any elliptic curve $E_{/\Q}$ without complex multiplicative any triple $(p, n, \Sigma)$, with $p\in \mathcal{P}_E$, there are infinitely many cyclic extensions such that the Selmer group grows. This implies that either the rank increases or the Tate-Shafarevich group grows. The following special case of the conjecture of David-Fearnley-Kisilevsky (see \cite[Conjecture 1.2]{DFK}) predicts that given an elliptic curve over $\Q$, the rank rarely jumps in $\Z/p\Z$-extensions with $p\geq 7$.
 \begin{Conjecture}[David-Fearnley-Kisilevsky]\label{DFK conjecture}
 Let $p\geq 7$ be a prime and $E$ an elliptic curve over $\Q$. Then, there are only finitely many $
\Z/p\Z$-extensions $L/\Q$ such that $\op{rank} E(L)>\op{rank} E(\Q)$.
 \end{Conjecture}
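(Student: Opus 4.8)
The plan is to turn a rank jump in a $\Z/p\Z$-extension $L/\Q$ into growth of the Iwasawa $\lambda$-invariant, pin that growth down with the Hachimori--Matsuno formula, and then try to show that the resulting surplus is carried by $\Sha(E/L_\infty)[p^\infty]$ rather than by the Mordell--Weil group, so that only finitely many $L$ can be left over.

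First I would work in the clean case $p\in\mathcal P_E$ (conjecturally $100\%$ of $p$; unconditionally $100\%$ when $\op{rank}E(\Q)=0$, by Theorem \ref{100 percent}), so that by Lemma \ref{lemma ECF mulambda} one has $\mu_p(E/\Q)=0$ and $\lambda_p(E/\Q)=\op{rank}E(\Q)=:r$; the cases $p\notin\mathcal P_E$ (for instance when $E[p]$ is reducible, where $\mu$ may be positive, or when the base $\lambda$-invariant already exceeds $r$) would require a separate, and itself problematic, treatment. Every $\Z/p\Z$-extension except $L=\Q_1$ satisfies $L\cap\Q_\infty=\Q$, and since the ramification index of a prime in a $\Z/p\Z$-tower is $1$ or $p$, Theorem \ref{kida thm} yields
\[\lambda_p(E/L)=pr+(p-1)\bigl(n_1(L)+2\,n_2(L)\bigr),\]
where $n_1(L)$ counts the primes of $L_\infty$ above primes dividing the conductor that ramify over $\Q_\infty$ (bounded independently of $L$), and $n_2(L)$ counts the ramified primes of $L_\infty$ above rational primes $\ell$ with $p\mid\#\widetilde E(\F_\ell)$. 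Since the set of such $\ell$ has positive Chebotarev density, $n_2(L)$, and hence $\lambda_p(E/L)$, is unbounded as $L$ varies, so Greenberg's inequality $\op{rank}E(L)\le\lambda_p(E/L)$ on its own does not bound $\op{rank}E(L)$.

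The next step is the representation-theoretic observation that a rank jump is necessarily large: $E(L)\otimes\Q\cong(E(\Q)\otimes\Q)\oplus W^{\oplus b}$ as $\Q[\Z/p\Z]$-modules, with $W$ the $(p-1)$-dimensional rational irreducible representation of $\Z/p\Z$ and $b\ge 1$ exactly when $\op{rank}E(L)>r$; thus $\op{rank}E(L)\ge r+(p-1)$ whenever the rank jumps. Consequently it would suffice to show that for all but finitely many $L$ the entire excess $(p-1)(n_1(L)+2n_2(L))$ in $\lambda_p(E/L)$ is accounted for by growth of $\Sha(E/L_\infty)[p^\infty]$ together with the local terms of Corollary \ref{cor ECF} --- in other words, that in the Euler-characteristic dichotomy underlying Theorem \ref{5.1 theorem pos rank} it is the $\Sha$-or-regulator alternative, not the rank alternative, that occurs outside a finite set of $L$.

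The hard part --- indeed the entire substance of the conjecture --- is precisely this last step: there is no known unconditional mechanism that forces surplus $\lambda$-invariant to materialize as Tate--Shafarevich rather than as Mordell--Weil rank, which is exactly why Theorem \ref{5.1 theorem pos rank} can only deliver a trichotomy. A conditional route would be to feed in Delaunay's heuristics \cite{delaunay} for $\Sha$ together with a rank-distribution hypothesis, arguing that, generically among the $L$ ramified at primes with $p\mid\#\widetilde E(\F_\ell)$, the Selmer growth is $\Sha$-growth, and then to make the exceptional set explicit by a counting argument in the spirit of the density estimates elsewhere in this paper; upgrading such a density statement to an honest finiteness statement, uniformly over the infinitely many relevant $L$, is the crux, and is what keeps the David--Fearnley--Kisilevsky statement a conjecture rather than a theorem.
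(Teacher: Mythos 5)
The statement you were asked about is labeled in the paper as a \emph{conjecture} (attributed to David--Fearnley--Kisilevsky, \cite[Conjecture 1.2]{DFK}), and the paper offers no proof of it --- it is invoked as a hypothesis, for instance in the corollary following Theorem \ref{5.1 theorem pos rank}. So there is no proof in the paper against which to compare your attempt, and any purported proof would be a major result going well beyond the paper.

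That said, your write-up does not actually claim to prove the conjecture: you work through the Iwasawa-theoretic obstructions and conclude, correctly, that the missing ingredient is a mechanism forcing the surplus in $\lambda_p(E/L)$ (produced by Kida-type formulas and the abundance of primes $\ell$ with $p\mid\#\widetilde E(\F_\ell)$) to land in $\Sha(E/L_\infty)[p^\infty]$ rather than in $\op{rank}E(L)$. This is precisely why Theorem \ref{5.1 theorem pos rank} in the paper can only deliver a trichotomy and not a finiteness statement. Your intermediate technical observations are sound: the specialization of the Hachimori--Matsuno formula to $\Z/p\Z$-extensions with ramification indices in $\{1,p\}$, the unboundedness of $\lambda_p(E/L)$ across such $L$, and the $\Q[\Z/p\Z]$-module decomposition of $E(L)\otimes\Q$ (using $\Q[\Z/p\Z]\cong\Q\times\Q(\zeta_p)$) showing that any rank jump must be by a multiple of $p-1$, all check out. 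You also rightly flag that restricting to $p\in\mathcal P_E$, while natural in the context of the paper, would leave the conjecture untouched for the remaining primes, so this is not even a reduction, let alone a proof. In short: there is no gap in your reasoning because you never claim to close the argument; you give a diagnosis of why the conjecture is open, and the diagnosis is accurate and consistent with how the paper treats it.
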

 
 The following is an immediate Corollary to Theorem \ref{5.1 main theorem}. The result shows that if the Conjecture of David-Fearney-Kisilevski is true, then indeed there are many $\Z/p\Z$-extensions in which the size of the Tate-Shafarevich group increases. In fact, infinitely many extensions may be constructed that in which a given finite set of primes split. 
 \begin{Corollary}
 Let $E$ be an elliptic curve without complex multiplication and let $p\in \mathcal{P}_E$. Assume that $p\geq 7$ and that Conjecture \ref{DFK conjecture} is satisfied for $E$ at $p$. Suppose $\op{rank} E(\Q)=0$, then, for any finite set of primes $\Sigma$, there are infinitely many $\Z/p\Z$-extensions $L/\Q$ in which all primes of $\Sigma$ split and
 \[\Sha(E/L)[p^\infty]\neq 0\text{ and }\Sha(E/\Q)[p^\infty]=0.\]
 Suppose that $\op{rank} E(\Q)>0$,
 then, for any finite set of primes $\Sigma$, there are infinitely many $\Z/p\Z$-extensions $L/\Q$ in which all primes of $\Sigma$ split and at least one of the the following conditions is satisfied:
 \begin{enumerate}
     \item $\mathcal{R}_p(E/\Q)\in\Z_p^\times$ and $\mathcal{R}_p(E/L)\in p\Z_p$,
    \item 
    $\Sha(E/\Q)[p^\infty]=0$ and $\Sha(E/L)[p^\infty]\neq 0$.
 \end{enumerate}
 \end{Corollary}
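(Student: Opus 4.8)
The plan is to read both assertions off Theorems \ref{5.1 main theorem} and \ref{5.1 theorem pos rank}, using Conjecture \ref{DFK conjecture} (David--Fearnley--Kisilevsky) to discard the finitely many $\Z/p\Z$-extensions in which the Mordell--Weil rank increases. What survives is an infinite family of $\Z/p\Z$-extensions in which the first (``rank'') alternative of the relevant theorem fails, and the statement then reduces to the Tate--Shafarevich/regulator alternative.

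\emph{Rank zero case.} Fix a finite set of primes $\Sigma$ and apply Theorem \ref{5.1 main theorem} with $n=1$. This produces infinitely many distinct $\Z/p\Z$-extensions $L/\Q$ in which every prime of $\Sigma$ splits and for which $\op{Sel}_{p^\infty}(E/\Q)=0$ while $\op{Sel}_{p^\infty}(E/L)\neq 0$. For any such $L$, if $\op{rank} E(L)>0$ then $\op{rank} E(L)\geq 1>0=\op{rank} E(\Q)$, a strict rank jump; since $p\geq 7$, Conjecture \ref{DFK conjecture} shows this happens for at most finitely many $\Z/p\Z$-extensions, so after removing those we are left with infinitely many $L$ having $\op{rank} E(L)=0$. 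For each such $L$ the group $E(L)$ is finite, hence $E(L)\otimes\Q_p/\Z_p=0$, and the descent exact sequence $0\to E(L)\otimes\Q_p/\Z_p\to\op{Sel}_{p^\infty}(E/L)\to\Sha(E/L)[p^\infty]\to 0$ gives $\Sha(E/L)[p^\infty]\cong\op{Sel}_{p^\infty}(E/L)\neq 0$. Finally, $p\in\mathcal{P}_E$ together with $\op{rank} E(\Q)=0$ give $\Sha(E/\Q)[p^\infty]=0$ by Remark \ref{remark}. This proves the first assertion.

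\emph{Positive rank case.} Since Theorem \ref{5.1 theorem pos rank} applies to an arbitrary cyclic $p$-extension, I would first use Lemma \ref{s4 splitting} with $n=1$: choosing mutually disjoint $(\#\Sigma+1)$-element sets of primes $\ell\equiv 1\pmod p$, which exist in abundance by Dirichlet's theorem, yields infinitely many distinct $\Z/p\Z$-extensions $L/\Q$ in which every prime of $\Sigma$ splits. Each such $L$ lies in a field $\Q(\mu_m)$ with $p\nmid m$, so $L$ is ramified only at primes $\neq p$ and therefore $L\cap\Q_\infty=\Q$. Applying Theorem \ref{5.1 theorem pos rank} to each of these $L$ gives one of its three alternatives; since $\op{rank} E(\Q)>0$ and $p\geq 7$, its first alternative $\op{rank} E(L)\geq[L:\Q]\op{rank} E(\Q)=p\cdot\op{rank} E(\Q)$ forces a strict rank jump, so by Conjecture \ref{DFK conjecture} it occurs for only finitely many of the $L$. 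Removing these, for all but finitely many of the extensions produced we have either $\mathcal{R}_p(E/\Q)\in\Z_p^\times$ with $\mathcal{R}_p(E/L)\in p\Z_p$ (the second alternative of Theorem \ref{5.1 theorem pos rank}, with the condition $\mathcal{R}_p(E/L)\notin\Z_p^\times$ recast as $\mathcal{R}_p(E/L)\in p\Z_p$), or $\Sha(E/\Q)[p^\infty]=0$ with $\Sha(E/L)[p^\infty]\neq 0$. This is exactly the asserted dichotomy.

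The whole argument is essentially bookkeeping once the two theorems are available; the only points requiring a little care are that deleting the conjecturally finite set of rank-jumping extensions still leaves infinitely many extensions from the families of Theorem \ref{5.1 main theorem} and Lemma \ref{s4 splitting}, and, in the rank zero case, the passage from $\op{Sel}_{p^\infty}(E/L)\neq 0$ to $\Sha(E/L)[p^\infty]\neq 0$ via finiteness of $E(L)$ in the descent sequence. I do not expect a genuine obstacle beyond these.
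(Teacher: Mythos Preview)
Your argument is correct and follows essentially the same route as the paper's proof. In the rank~$0$ case both of you invoke Theorem~\ref{5.1 main theorem} with $n=1$, discard the finitely many rank-jumping extensions via Conjecture~\ref{DFK conjecture}, and then identify $\op{Sel}_{p^\infty}(E/L)$ with $\Sha(E/L)[p^\infty]$; your passage through the descent sequence using only that $E(L)$ is finite is in fact slightly cleaner than the paper's detour through $E(L)[p]=0$. In the positive rank case the paper simply says the result is ``a direct consequence of Theorem~\ref{5.1 theorem pos rank}'', so your explicit construction of infinitely many $\Z/p\Z$-extensions with $\Sigma$ split via Lemma~\ref{s4 splitting}, together with the verification that $L\cap\Q_\infty=\Q$ and the use of Conjecture~\ref{DFK conjecture} to exclude alternative~\eqref{5.3 c1}, fills in details the paper leaves implicit but does not depart from its intended line.
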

 \begin{proof}
\par First, consider the case when $\op{rank}E(\Q)=0$. According to Theorem \ref{5.1 main theorem}, there are an infinitude of $\Z/p\Z$-extensions $L/\Q$ in which the primes of $\Sigma$ split and such that $\op{Sel}_{p^\infty}(E/L)\neq 0$ and $\op{Sel}_{p^\infty}(E/\Q)=0$. If Conjecture \ref{DFK conjecture} is true, then for all but finitely many of the extensions $L/\Q$, the rank of $E(L)$ is zero. Suppose $L/\Q$ is a $\Z/p\Z$-extension such that $\op{rank} E(L)=0$. Note that since $p\in \mathcal{P}_E$, it is assumed that $E[p]$ is irreducible as a Galois module. As a consequence, we have that $E(\Q)[p]=0$. Since $L/\Q$ is a $p$-extension, it follows that $E(L)[p]=0$ as well, see Lemma \ref{NSW lemma}. As a result, the $p$-primary Mordell-Weil group $E(L)\otimes \Q_p/\Z_p=0$, and hence, the Selmer group $\op{Sel}_{p^\infty}(E/L)$ coincides with $\Sha(E/L)[p^\infty]$. Since $\op{Sel}_{p^\infty}(E/L)\neq 0$, the result follows.
\par When $\op{rank} E(\Q)>0$, the result is a direct consequence of Theorem \ref{5.1 theorem pos rank}. \end{proof}
 \begin{Lemma}\label{boringlemma}
Let $p\geq 5$ be a prime and $L/\Q$ a $p$-cyclic extension, and $\ell\neq p$ be a prime. Make the following assumptions:
\begin{enumerate}
    \item $c_\ell^{(p)}(E/\Q)=1$,
    \item if $\ell$ is ramified in $L$, then $E$ has good reduction at $\ell$.
\end{enumerate}Then, we have that $\prod_{v\mid\ell} c_v^{(p)}(E/L)=1$, where $v$ ranges over all primes of $L$ above $\ell$.
\end{Lemma}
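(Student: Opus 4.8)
The plan is to reduce the asserted identity to a purely local statement: it suffices to show that $c_v^{(p)}(E/L)=1$ for \emph{every} prime $v$ of $L$ lying above $\ell$, since then the product over such $v$ is $1$. So I would fix a prime $v\mid\ell$ of $L$. Because $\op{Gal}(L/\Q)$ is cyclic of $p$-power order, the local degree $[L_v:\Q_\ell]$ divides that order and is therefore itself a power of $p$; in particular it is \emph{odd}. Oddness of this local degree, together with the hypothesis $p\geq 5$, provides the two inputs that drive the case analysis below.

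Next I would split into cases. If $\ell$ is ramified in $L$, then assumption (2) forces $E$ to have good reduction at $\ell$; since good reduction is preserved under arbitrary base change, $E/L_v$ has good reduction and $c_v(E/L_v)=1$. If $\ell$ is unramified in $L$, then $L_v/\Q_\ell$ is unramified of odd $p$-power degree, and I would argue according to the reduction type of $E$ at $\ell$. Good reduction at $\ell$ again gives good reduction at $v$ and $c_v=1$. If $E$ has additive reduction at $\ell$, then, since unramified base change leaves the inertia group unchanged (equivalently, by the N\'eron--Ogg--Shafarevich criterion, since multiplicativity is detected by $v(j)<0$, which is unchanged), $E$ still has additive reduction at $v$; but the Tamagawa number of a curve with additive reduction is at most $4$, which is strictly less than $p$, so $p\nmid c_v(E/L_v)$ and $c_v^{(p)}(E/L)=1$. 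Finally, if $E$ has multiplicative reduction at $\ell$, set $n:=-v_\ell(j(E))>0$; since $L_v/\Q_\ell$ is unramified we still have $-v_{L_v}(j(E))=n$, so the reduction at $v$ is multiplicative of Kodaira type $I_n$, and, because $[L_v:\Q_\ell]$ is odd, the unramified quadratic character that distinguishes split from non-split multiplicative reduction restricts nontrivially to $\op{Gal}(\bar{\Q}_\ell/L_v)$; hence the reduction at $v$ is split if and only if the reduction at $\ell$ is split. In the non-split case $c_v(E/L_v)=\gcd(2,n)\leq 2<p$, and in the split case $c_v(E/L_v)=n=c_\ell(E/\Q)$ with $p\nmid c_\ell(E/\Q)$ by assumption (1). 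Either way $p\nmid c_v(E/L_v)$, so $c_v^{(p)}(E/L)=1$ for every $v\mid\ell$, which gives the lemma.

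As for the main obstacle, there is no deep difficulty here, only two points that need care. The first is the bookkeeping of how the reduction type and the local Tamagawa number behave under the unramified $p$-power-degree base change $\Q_\ell\hookrightarrow L_v$; the essential observation is that an odd-degree unramified extension can neither turn non-split multiplicative reduction into split multiplicative reduction nor merge special-fibre components across the inertia action, so the only mechanism by which the Tamagawa number could acquire a factor of $p$ is already visible over $\Q_\ell$, namely the split multiplicative case, which is precisely the one excluded by hypothesis (1). The second is simply recalling that additive reduction has Tamagawa number at most $4$, which is where the hypothesis $p\geq 5$ enters; for $p=3$ the additive case would genuinely require invoking hypothesis (1) as well.
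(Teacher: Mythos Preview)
Your proof is correct and follows essentially the same strategy as the paper: split according to whether $\ell$ is ramified in $L$, and in the unramified case control the Tamagawa number at $v$ via the reduction type. The only difference is one of packaging: the paper appeals to Kida's tables for the behaviour of Kodaira types under tame base change, whereas you give the direct elementary argument from the standard Tamagawa-number formulas, and in fact you are more explicit than the paper about the split/non-split multiplicative distinction (the paper's ``if and only if'' tacitly assumes split $I_n$, which your odd-degree argument makes precise).
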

\begin{proof}
Since $\ell\neq p$, and $L/\Q$ is a pro-$p$, it follows that $\ell$ is tamely ramified in $L$, and the results for base-change of Tamagawa numbers in \cite[Table 1, pp. 556-557]{Kida} apply. We consider two cases.
\begin{enumerate}
    \item First consider the case where $\ell$ is unramified in $L$. Fix a prime $v\mid \ell$. Since $p\geq 5$, the Tamagawa number $c_v^{(p)}(E/L)\neq 1$ if and only if the Kodaira type of $E_{/L_v}$ is $\op{I}_n$ for an integer $n\in \Z_{\geq 1}$ that is divisible by $p$ (see \cite[p. 448]{Sil09}). However, according to \cite[Table 1, pp. 556-557]{Kida}, the only way this is possible is if if the Kodaira type of $E_{/\Q_\ell}$ is $\op{I}_n$ to begin with. However, this is not the case, since $c_\ell^{(p)}(E/\Q)= 1$.
    \item Next, consider the case when $\ell$ ramifies in $L$. In this case, it is assumed that $E$ has good reduction at $\ell$, and the result is clear.
\end{enumerate}
\end{proof}
 \begin{Proposition}\label{boring prop}
 Let $E_{/\Q}$ be an elliptic curve and $p$ an odd prime at which $E$ has good ordinary reduction. Assume that $p\in \mathcal{P}_E$ and that $\op{rank} E(\Q)=0$.
 Let $L/\Q$ be a $\Z/p^n\Z$-extension of $\Q$ for which the following conditions are satisfied:
 \begin{enumerate}
     \item\label{boring prop c1} $E$ has good reduction at all primes $\ell\neq p$ which ramify in $L$,
     \item\label{boring prop c2} there exists a prime $\ell\neq p$ which ramifies in $L$, we have that $p|\#\widetilde{E}(\F_\ell)$.
 \end{enumerate}
 Then at least one of the following holds
 \begin{enumerate}
     \item $\op{rank} E(L)>0$,
     \item $\Sha(E/L)[p^\infty]\neq 0$ and $\Sha(E/\Q)[p^\infty]=0$.
 \end{enumerate}
 \end{Proposition}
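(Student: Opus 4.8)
The plan is to play the Hachimori--Matsuno formula for $\lambda_p(E/L)$ against the truncated Euler characteristic formula for $E$ over $L$, and to derive a contradiction from the assumption that both conclusions fail. First I would record what $p\in\mathcal{P}_E$ together with $\op{rank}E(\Q)=0$ buys us. By Definition \ref{def PE}, $\op{Sel}_{p^\infty}(E/\Q_\infty)=0$; by the list of equivalences used in the proof of Theorem \ref{100 percent} this means $\mu_p(E/\Q)=\lambda_p(E/\Q)=0$ together with $p\nmid c_\ell(E)$ for every prime $\ell\neq p$ and $p\nmid\#\widetilde{E}(\F_p)$; and by Remark \ref{remark}, $\Sha(E/\Q)[p^\infty]=0$. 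Since $E[p]$ is irreducible, $E(\Q)[p]=0$, hence $E(L)[p]=0$ by Lemma \ref{NSW lemma}. As usual I take $L\cap\Q_\infty=\Q$, so Theorem \ref{kida thm} applies: $\op{Sel}_{p^\infty}(E/L_\infty)$ is $\Lambda$-cotorsion, $\mu_p(E/L)=0$, and since $\lambda_p(E/\Q)=0$,
\[\lambda_p(E/L)=\sum_{\tilde{w}\in P_1}\left(e(\tilde{w}/\tilde{\ell})-1\right)+2\sum_{\tilde{w}\in P_2}\left(e(\tilde{w}/\tilde{\ell})-1\right).\]

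Second, I would use hypothesis \eqref{boring prop c2} to show $\lambda_p(E/L)\geq 2$. Fix a prime $\ell\neq p$ ramifying in $L$ with $p\mid\#\widetilde{E}(\F_\ell)$; by \eqref{boring prop c1}, $E$ has good reduction at $\ell$. Pick a prime $\tilde{w}$ of $L_\infty$ over $\ell$ and $\tilde{\ell}$ below it in $\Q_\infty$. Since $\ell\neq p$, both $\Q_\infty/\Q$ and $L_\infty/L$ are unramified at $\ell$, so $e(\tilde{w}/\tilde{\ell})$ equals the ramification index of $\ell$ in $L$, which is $\geq 2$. Good reduction at $\ell$ gives $\tilde{w}\notin P_1$; and exactly as in the proof of Lemma \ref{stability lemma}, the reduction map (surjective with pro-$\ell$ kernel) induces an isomorphism $E(L_{\infty,\tilde{w}})[p^\infty]\xrightarrow{\sim}\widetilde{E}(k_{\tilde{w}})[p^\infty]$, while $\widetilde{E}(k_{\tilde{w}})[p^\infty]\supseteq\widetilde{E}(\F_\ell)[p]\neq 0$. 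Hence $\tilde{w}\in P_2$ and $\lambda_p(E/L)\geq 2(e(\tilde{w}/\tilde{\ell})-1)\geq 2$.

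Third, suppose toward a contradiction that $\op{rank}E(L)=0$ and $\Sha(E/L)[p^\infty]=0$ (if $\Sha(E/L)[p^\infty]$ is infinite it is in particular nonzero and the conclusion already holds, so it may be assumed finite). Then the hypotheses of Theorem \ref{th ECF} hold for $E/L$: good ordinary reduction at every $v\mid p$ is inherited from $\Q$; $\Sha(E/L)[p^\infty]$ is finite; the $p$-adic regulator is $1\neq 0$ since the rank is $0$; and $E(L)[p]=0$. Corollary \ref{cor ECF} then gives
\[\chi_t(\Gamma_L,E[p^\infty])\sim\mathcal{R}_p(E/L)\cdot\#\Sha(E/L)[p^\infty]\cdot\prod_{v\nmid p}c_v^{(p)}(E/L)\cdot\prod_{v\mid p}\bigl(\#\widetilde{E}(\kappa_v)[p^\infty]\bigr)^2,\]
in which the first two factors equal $1$. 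For $v\nmid p$ I would apply Lemma \ref{boringlemma} one prime at a time: $c_\ell^{(p)}(E/\Q)=1$ for all $\ell\neq p$, and any ramified $\ell\neq p$ is a prime of good reduction by \eqref{boring prop c1}, so $\prod_{v\mid\ell}c_v^{(p)}(E/L)=1$. For $v\mid p$, the residue degree $f$ of $v$ in the $\Z/p^n\Z$-extension $L/\Q$ is a power of $p$; writing the unit root of $X^2-a_pX+p$ as $\alpha\in\Z_p^\times$, one has $\#\widetilde{E}(\F_{p^f})\equiv 1-\alpha^f\pmod{p}$, so $p\mid\#\widetilde{E}(\F_{p^f})$ iff the order of $\bar{\alpha}\in\F_p^\times$ divides $f$; but $p\nmid\#\widetilde{E}(\F_p)$ forces $\bar{\alpha}\neq 1$, so that order is $>1$, divides $p-1$, and hence cannot divide the $p$-power $f$. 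Thus $\#\widetilde{E}(\kappa_v)[p^\infty]=1$ for all $v\mid p$, and $\chi_t(\Gamma_L,E[p^\infty])=1$. By Lemma \ref{lemma ECF mulambda} this forces $\lambda_p(E/L)=\op{rank}E(L)=0$, contradicting $\lambda_p(E/L)\geq 2$. Therefore $\op{rank}E(L)>0$ or $\Sha(E/L)[p^\infty]\neq 0$, and since $\Sha(E/\Q)[p^\infty]=0$ this is exactly the assertion.

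The step I expect to be the main obstacle is the bookkeeping at the primes above $p$ in the Euler characteristic formula over $L$: proving $\prod_{v\mid p}\#\widetilde{E}(\kappa_v)[p^\infty]=1$ requires combining the input $p\nmid\#\widetilde{E}(\F_p)$ (from $p\in\mathcal{P}_E$) with the fact that residue degrees in a $\Z/p^n\Z$-extension are $p$-powers, via the ordinarity of $\widetilde{E}$ at $p$; and one must check carefully that both the Euler characteristic formula and the Hachimori--Matsuno formula genuinely apply to $E/L$ (in particular that $\Sha(E/L)[p^\infty]$ may be taken finite without loss of generality, and that $L\cap\Q_\infty=\Q$).
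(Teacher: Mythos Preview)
Your proposal is correct and follows essentially the same contradiction argument as the paper: assume both conclusions fail, show the truncated Euler characteristic over $L$ equals $1$ (hence $\lambda_p(E/L)=0$ by Lemma \ref{lemma ECF mulambda}), and contradict this with the Hachimori--Matsuno formula, which forces $\lambda_p(E/L)>0$ from the ramified prime in $Q_2$. The one cosmetic difference is your treatment of the $v\mid p$ factor: you run an explicit unit-root computation, whereas the paper simply observes that $[\kappa_v:\F_p]$ is a $p$-power and invokes Lemma \ref{NSW lemma} to pass from $\widetilde{E}(\F_p)[p]=0$ to $\widetilde{E}(\kappa_v)[p]=0$, which is shorter and avoids the Frobenius eigenvalue bookkeeping.
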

 \begin{proof}
 Recall that since $\op{rank} E(\Q)=0$, the (normalized) regulator $\mathcal{R}_p(E/\Q)=1$. By definition, since $p\in \mathcal{P}_E$, the following equivalent conditions are satisfied:
 \begin{enumerate}
     \item $\mu_p(E/\Q)=0$ and $\lambda_p(E/\Q)=0$, 
     \item\label{29 aug c2} $\chi(\Gamma_\Q, E[p^\infty])=1$, 
     \item\label{29 aug c3} $\Sha(E/\Q)[p^\infty]=0$, $p\nmid c_\ell(E/\Q)=1$ for all $\ell\neq p$ and $p\nmid \#\widetilde{E}(\F_p)$.
 \end{enumerate}The equivalence of \eqref{29 aug c2} and \eqref{29 aug c3} follows from the Euler characteristic formula. Assume by way of contradiction that both the conditions \eqref{boring prop c1} and \eqref{boring prop c2} of the proposition are not satisfied. In other words, assume that $\op{rank} E(L)=0$ and $\Sha(E/L)[p^\infty]=0$. Note that since $E[p]$ is irreducible as a Galois module, it follows that $E(\Q)[p]=0$. Since $L/\Q$ is a $p$-cyclic extension, it follows from Lemma \ref{NSW lemma} that $E(L)[p]=0$. Thus, we have that 
\[\chi_t(\Gamma_L, E[p^\infty])\sim \# \Sha(E/L)[p^\infty] \times \prod_{v\nmid p} c_{v}^{(p)}(E/L) \times \prod_{v|p}\left(\# \widetilde{E}(\kappa_v)[p^\infty]\right)^2.\] 
We show that $\chi_t(\Gamma_L, E[p^\infty])$ by showing that the contributing factors are all $1$. It follows from Lemma \ref{boringlemma} that the Tamagawa product $\prod_{v\nmid p} c_v^{(p)}(E/L)=1$. Finally, for each prime $v|p$, $[k_v:\F_p]$ is either $1$ or a power of $p$. Hence, according to
\[\#\widetilde{E}(\F_p)[p]=0\Rightarrow \#\widetilde{E}(k_v)[p]=0.\] Putting everything together, we conclude that $\chi_t(\Gamma_L, E[p^\infty])=1$. Hence, according to Lemma \ref{lemma ECF mulambda}, this implies that $\mu_p(E/L)=0$ and $\lambda_p(E/L)=0$. On the other hand, since there is a prime $\ell\neq p$ which is ramified in $L$ such that $p|\#\widetilde{E}(\F_\ell)$, we have that $\ell\in Q_2$. Therefore, it follows from Kida's formula that $\lambda_p(E/L)>0$, a contradiction.
\end{proof}
\begin{proof}[proof of Theorem \ref{5.1 main theorem}]
Note that since $p\in \mathcal{P}_E$, the Selmer group $\op{Sel}_{p^\infty}(E/\Q_\infty)=0$. As argued before, the irreducibility of $E[p]$ implies that the Selmer group $\op{Sel}_{p^\infty}(E/\Q)$ injects into $\op{Sel}_{p^\infty}(E/\Q_\infty)$, hence is $0$ as well. Let $\mathfrak{T}_n$ be the set of primes $\ell\nmid Np$ such that 
 \begin{enumerate}
     \item $\ell\equiv 1\mod{p^n}$,
     \item $p\mid \# \widetilde{E}(\F_\ell)$.
 \end{enumerate}
 The same arguments as in the proof of Lemma \ref{s4 positive density} show that $\mathfrak{T}_n$ has positive density, hence, is infinite. Let $t:=\#\Sigma$, and let $\Omega$ be a set of $t+1$ primes in $\mathfrak{T}_n$. There is a $\Z/p^n\Z$-extension $L_\Omega$ with ramification contained in $\Omega$ and in which $\Sigma$ is split. The extension $L_\Omega$ is ramified at at least one prime $\ell\in \Omega$. Since $p$ divides $\#\widetilde{E}(\F_\ell)$, the result follows from Proposition \ref{boring prop}.
\end{proof}

\begin{proof}[proof of Theorem \ref{5.1 theorem pos rank}]
\par The proof of this result is similar to that of Proposition \ref{boring prop}. Assume that the conditions \eqref{5.3 c2} and \eqref{5.3 c3} are not satisfied. Then, we show that \eqref{5.3 c1} is satisfied, i.e., $\op{rank}E(L)\geq p\op{rank} E(\Q)$. Recall that since $p\in \mathcal{P}_E$, $\chi_t(\Gamma_\Q, E[p^\infty])=1$. Since $E[p]$ is irreducible as a Galois module, the Euler characteristic formula implies that $\mathcal{R}_p(E/\Q)\in \Z_p^\times$. Since \eqref{5.3 c2} is not satisfied, $\mathcal{R}_p(E/L)\in \Z_p^\times$. Moreover $\Sha(E/\Q)[p^\infty]=0$, and since \eqref{5.3 c3} is not satisfied, it follows that $\Sha(E/L)[p^\infty]=0$. By the same reasoning as in the proof of Proposition \ref{boring prop}, we have that $\chi_t(\Gamma_L, E[p^\infty])=1$. Therefore, by Lemma \ref{lemma ECF mulambda}, $\lambda_p(E/L)=\op{rank} E(L)$. On the other hand, since $p\in \mathcal{P}_E$, $\lambda_p(E/\Q)=\op{rank} E(\Q)$. Therefore, according to \eqref{kidaformula}, 
\[\op{rank} E(L)\geq [L:\Q] \op{rank} E(\Q),\] hence, condition \eqref{5.3 c1} is satisfied.
 \end{proof}

 \section{Stability for Elliptic curves on average at a fixed prime $p$}
 \par Recall that in sections \ref{s4} and \ref{s5}, results were proved for a fixed elliptic curve and varying prime $p$. In this section, we study the dual questions. 
 \begin{question}\label{question 6.1}
 Let $p$ be a prime. What can one say about the proportion of rational elliptic curves that are diophantine stable at $p$?
 \end{question}
 Our results on diophantine stability only apply to $p\geq 11$, hence, we make this assumption throughout. Recall that any elliptic curve $E_{/\Q}$ admits a unique Weierstrass equation
\begin{equation}\label{weier}
E:Y^2 = X^3 + aX + b
\end{equation}
where $a, b$ are integers and $\gcd(a^3 , b^2)$ is not divisible by any twelfth power.
Since $p\geq 5$, such an equation is minimal.
Recall that the \emph{height of} $E$ satisfying the minimal equation $\eqref{weier}$ is given by $H(E) := \op{max}\left(|a|^3, b^2\right)$.
Let $\mathscr{E}$ be the set of isomorphism classes of elliptic curves defined over $\Q$, and for any subset $\mathcal{S}\subset \mathscr{E}$, let $\mathcal{S}(x)$ consist of all $E\in \mathcal{S}$ such that $H(E)<x$.
\par In order to provide some answers to the above question, we assume some well known conjectures.
 \begin{Conjecture}[Rank distribution conjecture]
 For elliptic curves ordered by height, the proportion of elliptic curves with rank $0$ is $50\%$, the proportion with rank $1$ is $50\%$.
 \end{Conjecture}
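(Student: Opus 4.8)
The plan is to decompose the statement into the two phenomena it really packages together: a \emph{parity} input, which fixes the density of curves of even versus odd Mordell--Weil rank, and a \emph{minimalist} input, which forces the rank to be the smallest value compatible with its parity. First I would handle the parity input. Ordering $E_{/\Q}$ by the naive height $H(E)=\op{max}(|a|^3,b^2)$, one shows (granting the usual square-free sieve input where needed) that the global root number $w(E)\in\{\pm1\}$ equidistributes, i.e.\ each value of $w(E)$ occurs on a density $\tfrac12$ subset of $\mathscr{E}$. This goes back to Helfgott's analysis of root numbers in families: one writes $w(E)=w_\infty\prod_\ell w_\ell(E)$ with $w_\infty=-1$, observes that $w_\ell(E)$ is governed by congruence conditions on $(a,b)$ outside the sparse locus of additive reduction at $\ell$, and controls the residual oscillation by a sieve on the relevant discriminant-type polynomials. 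One then invokes the parity conjecture, $(-1)^{\op{rank}E(\Q)}=w(E)$. Although BSD is unavailable, the $p$-parity conjecture of T.~and V.~Dokchitser holds for \emph{every} prime $p$ and every $E_{/\Q}$; combined with finiteness of $\Sha(E/\Q)[p^\infty]$ for a single $p$ it upgrades to the full parity conjecture, and since $\Sha$ is assumed finite throughout, this yields $(-1)^{\op{rank}E(\Q)}=w(E)$ for all curves in play. Hence $50\%$ of elliptic curves have even rank and $50\%$ have odd rank.

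Next I would attack the minimalist input, which requires that $100\%$ of the even-rank curves have rank exactly $0$ and $100\%$ of the odd-rank curves have rank exactly $1$; equivalently, that $\{E : \op{rank}E(\Q)\geq2\}$ has density $0$ in $\mathscr{E}$. The strongest available tool is the work of Bhargava--Shankar on the average size of $n$-Selmer groups for $n\in\{2,3,4,5\}$: for each such $n$ the average of $\#\op{Sel}_n(E/\Q)$ over $E$ ordered by height equals $\sigma(n)=\sum_{d\mid n}d$, and optimizing over $n=5$ one deduces that the average Mordell--Weil rank is less than $1$ (about $0.885$). This already gives a positive proportion of curves of rank $0$ and a positive proportion of rank $\leq 1$, but it does not produce $100\%$. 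To close the gap one would need either the analogous Selmer-average estimate as $n\to\infty$ (which would drive the average rank to $0$ on the even side and to $1$ on the odd side), or a direct proof that the rank-$\geq2$ curves form a density-$0$ set. The natural framework for the latter is the random maximal-isotropic-subspace / random-matrix model of Poonen--Rains and Bhargava--Kane--Lenstra--Poonen--Rains, which predicts all moments of $\op{Sel}_n$ and hence that $\op{rank}E(\Q)=0$ with probability $\tfrac12$, $=1$ with probability $\tfrac12$, and $\geq2$ with probability $0$; combined with the parity count this is exactly the asserted $50/50$ split.

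The main obstacle is precisely this last step: \emph{no unconditional proof is known that the density of rank-$\geq2$ elliptic curves is zero}, and even under GRH the sharpest bounds on the average analytic rank (Brumer; Heath-Brown; Young) lie strictly between $0$ and $1$, so they cannot separate rank $0$ from rank $2$ inside the even-rank family. In short, the parity half of the argument is essentially a theorem, while the minimalist half --- which is what actually yields the sharp density statement --- rests on the Selmer-distribution heuristics and is genuinely open; a complete proof would require a new idea going beyond current Selmer-group estimates. This is why the statement is invoked in the present paper as a hypothesis rather than proved.
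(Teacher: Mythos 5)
The statement you were handed is labeled a \emph{Conjecture} in the paper, and the paper supplies no proof of it --- it is used purely as a hypothesis (it feeds into Conjecture \ref{RDC} and Theorem \ref{last thm}). You correctly recognize this and, rather than manufacturing a proof, give an accurate account of why the statement is open, which is the right move here. Your decomposition into a parity input and a minimalist input is the standard one. The parity half is, as you say, essentially available: root-number equidistribution over the height-ordered family, combined with the $p$-parity theorem of T.~and V.~Dokchitser, upgraded to full parity using the paper's blanket assumption that $\Sha(E/\Q)$ is finite, yields a $50/50$ split between even and odd rank. The minimalist half --- that the set of curves with $\op{rank}E(\Q)\geq 2$ has density zero --- is exactly what current technology does not give: Bhargava--Shankar's $n$-Selmer averages for $n\leq 5$ bound the average rank strictly below $1$ but cannot separate rank $0$ from rank $2$ within the even-parity family, and the Poonen--Rains / BKLPR heuristics predict but do not prove the full $50/50$ rank distribution. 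So your conclusion that the statement rests on Selmer-distribution heuristics and is genuinely open, and is therefore invoked in the paper as an assumption, is the correct assessment; there is no ``paper's proof'' to compare against. The one caveat worth making explicit is that even your parity half is conditional once one drops the paper's standing finiteness-of-$\Sha$ assumption: unconditionally, $p$-parity does not by itself yield full parity, so stated in isolation the $50/50$ even/odd split is also conjectural.
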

 Thus, the proportion of elliptic curves $E_{/\Q}$ with $\op{rank} E(\Q)\geq 2$ is expected to be $0\%$. Denote by $\mathscr{E}$ the set of all isomorphism classes of rational elliptic curves. Let $\mathscr{E}'$ be the set of isomorphism classes of rational elliptic curves of rank 0 with good reduction at the primes $2$ and $3$. At any prime $\ell$, the proportion of elliptic curves with good reduction is $(1-\ell^{-1})$, see the proof of \cite[Proposition 4.2 (1)]{stats2}. It is reasonable to make the following conjecture.
 \begin{Conjecture}[RDC+]\label{RDC}
The lower density of $\mathscr{E}'$ satisfies
\[\liminf_{x\rightarrow \infty} \frac{\# \mathscr{E}'(x)}{\# \mathscr{E}(x)}= \frac{1}{2}\left(1-\frac{1}{2}\right)\left(1-\frac{1}{3}\right)=\frac{1}{6}.\]
 \end{Conjecture}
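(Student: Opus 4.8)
The plan is to deduce the displayed density from the Rank Distribution Conjecture combined with the standard count of elliptic curves satisfying prescribed local conditions, the extra input being the (heuristic) independence of Mordell--Weil rank from local reduction data. Fix the counting setup first: ordering $\mathscr{E}$ by the height $H(E)=\op{max}(|a|^3,b^2)$ attached to the minimal model $\eqref{weier}$, one has $\#\mathscr{E}(x)\sim c\, x^{5/6}$ for an explicit $c>0$ (the twelfth-power-free sieve on $(a,b)$ supplies a factor $1/\zeta(10)$); only the resulting relative densities will matter. Write $\mathscr{E}_0\subset\mathscr{E}$ for the rank-$0$ curves and $\mathscr{E}_{\{2,3\}}\subset\mathscr{E}$ for those with good reduction at both $2$ and $3$, so that $\mathscr{E}'=\mathscr{E}_0\cap\mathscr{E}_{\{2,3\}}$.

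Next I would compute the density of $\mathscr{E}_{\{2,3\}}$. Good reduction at a prime $\ell$ is a congruence condition on $(a,b)$ modulo a fixed power of $\ell$, and counting lattice points of the box $|a|^3,b^2<x$ in the admissible residue classes gives proportion $1-\ell^{-1}$, exactly as in the proof of \cite[Proposition 4.2(1)]{stats2}. Since the conditions at $\ell=2$ and $\ell=3$ are congruences to coprime moduli, the Chinese Remainder Theorem makes them independent, so that $\#\mathscr{E}_{\{2,3\}}(x)/\#\mathscr{E}(x)\to(1-\tfrac12)(1-\tfrac13)=\tfrac13$.

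The crux is the final step, where I would invoke the Rank Distribution Conjecture in the strengthened form that it continues to hold, with the same proportion $\tfrac12$ of rank-$0$ curves, for any subfamily of $\mathscr{E}$ cut out by finitely many local conditions at finite places; applied to $\mathscr{E}_{\{2,3\}}$ this yields $\#\mathscr{E}'(x)/\#\mathscr{E}_{\{2,3\}}(x)\to\tfrac12$, whence
\[
\lim_{x\to\infty}\frac{\#\mathscr{E}'(x)}{\#\mathscr{E}(x)}=\frac12\cdot\frac13=\frac16,
\]
and in particular the $\liminf$ equals $\tfrac16$. The main obstacle is precisely this input: the bare Rank Distribution Conjecture asserts equidistribution only in the full family, and one must know that imposing good reduction at $2$ and $3$ does not bias the rank. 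Such independence is predicted by the heuristics underlying the conjecture --- the minimalist philosophy and the Bhargava--Shankar-type models, where $n$-Selmer and hence rank statistics are governed by random alternating and quadratic forms whose cokernel distribution is insensitive to finitely many local reduction constraints --- but it is not a formal consequence of the statement of the Rank Distribution Conjecture, which is why the assertion is recorded as a conjecture and only the lower density ($\liminf$) is claimed.
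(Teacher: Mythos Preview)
The statement you are attempting to justify is recorded in the paper as a \emph{conjecture}, not a theorem; the paper offers no proof. Its entire motivation is the sentence ``It is reasonable to make the following conjecture,'' preceded by the observation that the proportion of curves with good reduction at a prime $\ell$ is $1-\ell^{-1}$ (citing \cite[Proposition~4.2(1)]{stats2}). There is nothing further to compare against.

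Your proposal is therefore not a proof but a heuristic derivation, and you identify this yourself in the final paragraph: the independence of rank from finitely many local reduction conditions is not a formal consequence of the Rank Distribution Conjecture as stated, and this is exactly why the assertion must remain conjectural. Your write-up is thus a fleshed-out version of the paper's one-line motivation, making explicit the decomposition $\mathscr{E}'=\mathscr{E}_0\cap\mathscr{E}_{\{2,3\}}$, the CRT argument giving density $\tfrac13$ for $\mathscr{E}_{\{2,3\}}$, and the extra independence hypothesis needed to multiply by $\tfrac12$. This is consistent with the paper's intent, but you should be clear that what you have written is a plausibility argument for a conjecture, not a proof of a theorem.
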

Along with the above rank distribution conjecture, one also makes the following assumption due to Delaunay, see \cite{delaunay}. \begin{Conjecture}[Del-p]\label{Del}
The proportion of all elliptic curves such that $E$ has rank 0 for which $p\mid \# \Sha(E/\Q)$ is given by
\[\frac{1}{2}\left(1-\prod_{i\geq 1} \left(1-\frac{1}{p^{2i-1}}\right)\right).\]
\end{Conjecture}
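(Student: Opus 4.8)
This is a conjecture: it restates Delaunay's heuristic prediction \cite{delaunay} for the distribution of Tate--Shafarevich groups together with the rank distribution conjecture, so what follows is the derivation of the displayed proportion \emph{within} that heuristic framework rather than an unconditional proof, the latter being far beyond current methods.

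The plan is to factor the assertion into two independent pieces. First, by the rank distribution conjecture, elliptic curves of rank $0$ make up a proportion $\tfrac12$ of all elliptic curves ordered by height. Hence it suffices to determine the conditional probability, among rank $0$ curves $E$, that $p\mid \#\Sha(E/\Q)$, and then to multiply that probability by $\tfrac12$.

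Second, I would appeal to Delaunay's model for this conditional probability. In that model $\Sha(E/\Q)[p^\infty]$ is regarded as a random finite abelian $p$-group $G$; the crucial structural input is that $\Sha(E/\Q)$ (assumed finite) carries the non-degenerate \emph{alternating} Cassels--Tate pairing, so only those $G$ in which each cyclic factor $\Z/p^e\Z$ occurs with even multiplicity are admissible --- in particular $\#\Sha(E/\Q)$ is a perfect square. Delaunay assigns to each admissible $G$ a probability proportional to $\bigl(\#G\cdot\#\mathrm{Aut}(G)\bigr)^{-1}$, normalized to total mass $1$; here $\#\mathrm{Aut}(G)^{-1}$ is the usual Cohen--Lenstra weight and the extra factor $\#G^{-1}$ encodes the rank-$0$ (``$u=0$'') normalization, in analogy with class groups of imaginary quadratic fields. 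With this measure in hand, the value $\mathrm{Prob}(G=0)$ is obtained by a standard generating-function manipulation over finite abelian $p$-groups restricted to the even-multiplicity ones: one gets $\mathrm{Prob}\bigl(\Sha(E/\Q)[p^\infty]=0\bigr)=\prod_{i\geq 1}\bigl(1-p^{-(2i-1)}\bigr)$, whence $\mathrm{Prob}\bigl(p\mid\#\Sha(E/\Q)\bigr)=1-\prod_{i\geq 1}\bigl(1-p^{1-2i}\bigr)$. Multiplying by $\tfrac12$ yields exactly the claimed proportion.

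The ``hard part'' is not the combinatorial evaluation, which is routine once the measure is written down, but the fact that the whole argument rests on two open conjectures: the rank distribution conjecture, and the much deeper (purely heuristic) claim that $\Sha[p^\infty]$ is equidistributed according to Delaunay's measure as $E$ ranges over rank $0$ curves ordered by height. No unconditional input toward either is available, so the statement must remain conjectural; the role of the ``proof'' here is only to record how the constant $\tfrac12\bigl(1-\prod_{i\geq1}(1-p^{1-2i})\bigr)$ is assembled from those two ingredients.
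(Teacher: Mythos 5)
Your reading is exactly right: the paper records this as a conjecture, not a theorem, and the only ``justification'' it offers is the same two-part decomposition you give --- the factor $\tfrac12$ from the rank distribution conjecture, and the product $\prod_{i\geq 1}\bigl(1-p^{-(2i-1)}\bigr)$ from Delaunay's heuristic for the probability that a rank $0$ curve has $p\nmid\#\Sha(E/\Q)$. Your extra paragraph recalling how Delaunay's measure arises (Cohen--Lenstra weight twisted by $\#G^{-1}$, restricted to groups admitting a non-degenerate alternating pairing) is a correct and welcome elaboration of what the paper compresses into a single sentence, and you correctly flag that no unconditional proof is possible.
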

Delaunay predicts that the proportion of rank 0 elliptic curves with $p$ dividing the order of the Tate-Shafarevich group is given by \[1-\prod_{i\geq 1} \left(1-\frac{1}{p^{2i-1}}\right)= \frac{1}{p}+\frac{1}{p^3}-\frac{1}{p^4}+\dots.\] The term $1/2$ is expected due to rank distribution.
\par For $(a,b)\in \F_p\times \F_p$ with $\Delta:=4a^3+27b^2$ non-zero, associate the elliptic curve $E_{a,b}$ defined by the Weierstrass equation 
\[
E_{a,b}:Y^2=X^3+aX+b.
\]
Let $\mathfrak{T}_p$ be the set of pairs $(a,b)\in \F_p\times \F_p$ such that $\#E_{a,b}(\F_p)\in \{p,p+1\}$. For $t\in \{0,1\}$, the number of isomorphism classes of elliptic curves over $\F_p$ with $\#E(\F_p)=p+1-t$ is given by $H(t^2-4p)$, where, $H(\cdot)$ denotes the Kronecker class number (see \cite[p. 184]{Schoof87}).

\begin{Lemma}\label{lemma 6.5}
Assume that $p\geq 5$ is a prime. There is an explicit positive constant $c_1$ such that
\[
\# \mathfrak{T}_p  \leq c_1 p^{\frac{3}{2}} \log p \left( \log \log p\right)^2.
\]
\end{Lemma}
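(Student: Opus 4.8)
The plan is to bound $\#\mathfrak{T}_p$ by grouping the pairs $(a,b)$ according to the isomorphism class of the elliptic curve $E_{a,b}$ over $\F_p$, and then to count isomorphism classes weighted by the size of their $\F_p$-automorphism groups. Concretely, for $j \in \F_p \setminus \{0, 1728\}$ the curves with a given $j$-invariant that have nonzero discriminant split into two twist classes (the quadratic twists), each contributing roughly $(p-1)/2$ pairs $(a,b)$; the special $j$-invariants $0$ and $1728$ contribute a bounded number of twist classes and $O(p)$ pairs total. So the main term is: $\#\mathfrak{T}_p$ is within $O(p)$ of $\frac{p-1}{2}$ times the number of $\overline{\F_p}$-isomorphism classes of curves $E/\F_p$ (counted with a single representative per $j$) whose point count is in $\{p, p+1\}$ — but it is cleaner to count $\F_p$-isomorphism classes directly via $\#\mathfrak{T}_p = \sum_{[E]} \frac{p-1}{\#\op{Aut}_{\F_p}(E)}$ over $\F_p$-isomorphism classes $[E]$ with $\#E(\F_p) \in \{p, p+1\}$, since the fiber of $(a,b) \mapsto E_{a,b}$ over an isomorphism class $[E]$ has size $(p-1)/\#\op{Aut}_{\F_p}(E)$. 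For $p \geq 5$ we have $\#\op{Aut}_{\F_p}(E) \leq 6$, and the number of classes with $j \in \{0,1728\}$ is $O(1)$, so up to an $O(p)$ error,
\[
\#\mathfrak{T}_p \le (p-1)\!\!\sum_{\substack{[E]/\F_p \\ \#E(\F_p)\in\{p,p+1\}}}\!\! \frac{1}{2} + O(p),
\]
and the sum of $1$ over such $[E]$ is exactly $\sum_{t \in \{0,1\}} H(t^2 - 4p)$ by the statement recalled just before the lemma (counting $\F_p$-isomorphism classes with trace of Frobenius $t$ via the Kronecker class number $H(t^2-4p)$; the two trace values $\pm t$ would both occur but $t=0$ is self-paired and $t=1,-1$ both give the same count $H(1-4p)$, so one is careful to collect exactly the classes with $\#E(\F_p)\in\{p,p+1\}$, i.e. trace in $\{0,1,-1\}$, giving $H(-4p) + 2H(1-4p)$ up to the $O(p)$ already absorbed).

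Next I would invoke the standard upper bound for the Kronecker class number: $H(D) \ll |D|^{1/2} \log|D| \,(\log\log|D|)^2$ for negative discriminants $D$. This is the classical estimate obtained from $H(D) = \sum_{f^2 \mid D,\ D/f^2 \equiv 0,1 (4)} h(D/f^2)$ combined with the bound $h(d) \ll |d|^{1/2} \log|d|$ for the ordinary class number (an ineffective $|d|^{1/2+\epsilon}$ also suffices, but to get the explicit shape with $\log\log$ one uses $h(d) w(d) = \frac{\sqrt{|d|}}{\pi} L(1,\chi_d)$ together with $L(1,\chi_d) \ll \log|d|$ and handles the sum over conductors $f$, where the $(\log\log)^2$ loss enters through bounding $\sum_{f \mid n} 1/f$ or the divisor-type sum over $f$; this is where the $\log\log$ factors come from). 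Applying this with $D = t^2 - 4p$, so $|D| \asymp p$, gives $H(t^2-4p) \ll p^{1/2}\log p\,(\log\log p)^2$ for each of the two values of $t$.

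Combining the two displays yields $\#\mathfrak{T}_p \le (p-1) \cdot O\!\big(p^{1/2}\log p (\log\log p)^2\big) + O(p) \le c_1 \, p^{3/2}\log p\,(\log\log p)^2$ for a suitable explicit constant $c_1$, which is the claim. I expect the main obstacle to be bookkeeping rather than conceptual: first, getting the fiber-size count over isomorphism classes exactly right (the factor $(p-1)/\#\op{Aut}$ and the separate, harmless treatment of $j = 0, 1728$ and of $\Delta = 0$); and second, pinning down the \emph{explicit} constant in the Kronecker class number bound with the stated $\log p\,(\log\log p)^2$ shape — this requires citing a source that makes the $h(d) \ll \sqrt{|d|}\log|d|$ and the sum-over-conductors estimate effective, e.g. via the Dirichlet class number formula and an explicit bound on $L(1,\chi)$. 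Everything else is a routine assembly of these two ingredients.
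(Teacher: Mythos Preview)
Your approach is essentially identical to the paper's: bound each $\F_p$-isomorphism class by at most $(p-1)/2$ Weierstrass pairs, count classes with $\#E(\F_p)\in\{p,p+1\}$ via the Kronecker class numbers $H(t^2-4p)$, and invoke the standard bound $H(D)\ll |D|^{1/2}\log|D|\,(\log\log|D|)^2$ (the paper simply cites Lenstra, \emph{Annals} 1987, Proposition~1.9 for this last step). One bookkeeping slip: $\#E(\F_p)\in\{p,p+1\}$ corresponds to Frobenius trace $t\in\{0,1\}$, not $\{0,\pm1\}$, so the relevant count is $H(-4p)+H(1-4p)$ rather than $H(-4p)+2H(1-4p)$; since you are proving an upper bound this only affects the constant.
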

\begin{proof}
It is well known that $E_{a,b}$ is isomorphic to $E_{a',b'}$ over $\F_p$ if and only if 
\[a'=c^4 a\text{ and }b'=c^6 b\] for some element $c\in \F_p^\times$. Thus, the number of curves $E_{a',b'}$ that are isomorphic to $E_{a,b}$ is at most $\frac{p-1}{2}$, hence, the first inequality. The inequality follows from \cite[Proposition 1.9]{Lenstra_annals}.
\end{proof}

\par The following result provides a conditional answer to Question \ref{question 6.1}.
\begin{Th}\label{last thm}
Let $p\geq 11$ be a prime and assume that Conjectures (RDC+) and (Del-p) are satisfied. Let $\mathscr{E}_p$ be the set of isomorphism classes of elliptic curves $E_{/\Q}$ satisfying the following conditions:
\begin{enumerate}
    \item\label{6.5 c1} $\op{rank} E(\Q)=0$,
    \item $E[p]$ is irreducible as a Galois module,
    \item $E$ has good reduction at $2,3$,
    \item\label{6.5 c4} $E$ has good ordinary reduction at $p$,
    \item\label{6.5 c5} $E$ is diophantine stable and $\Sha$-stable at $p$.
\end{enumerate}
Then, for the effective constant $c_1>0$ of Lemma \ref{lemma 6.5}, we have that
\[\begin{split}\liminf_{x\rightarrow \infty} \frac{\# \mathscr{E}_p(x)}{\#\mathscr{E}(x)} \geq &\frac{1}{6}-\frac{1}{p}-\frac{1}{2}\left(1-\prod_i \left(1-\frac{1}{p^{2i-1}}\right)\right) \\
& -(\zeta(p)-1)-\zeta(10) c_1\frac{\log p\cdot (\log \log p)^2}{\sqrt{p}}.\\
\end{split}\]
In particular, we find that
\[\liminf_{p\rightarrow \infty}\left(\liminf_{x\rightarrow \infty} \frac{\# \mathscr{E}_p(x)}{\#\mathscr{E}(x)}\right)=\frac{1}{6}.\]
\end{Th}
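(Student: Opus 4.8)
The plan is to derive the lower bound from Conjecture \ref{RDC} by a union-bound argument: start from the family $\mathscr{E}'$ of rank $0$ curves with good reduction at $2$ and $3$, and remove the curves that fail one of the remaining defining conditions of $\mathscr{E}_p$. The first step is to convert the Iwasawa-theoretic condition \eqref{6.5 c5} into an explicit numerical criterion. By Theorem \ref{thm section 4 main}, a non-CM curve $E$ with $\op{rank}E(\Q)=0$ is diophantine stable and $\Sha$-stable at a prime $p\geq 11$ once $p\in\mathcal{P}_E$; and, as established in the proof of Theorem \ref{100 percent}, for such a curve with good ordinary reduction at $p$ and $E[p]$ irreducible one has $p\in\mathcal{P}_E$ (equivalently $\op{Sel}_{p^\infty}(E/\Q_\infty)=0$) if and only if $p\nmid\#\Sha(E/\Q)$, $p\nmid c_\ell(E)$ for every prime $\ell\neq p$, and $p\nmid\#\widetilde{E}(\F_p)$. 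Hence, if $\mathscr{E}_p^{\circ}$ denotes the set of non-CM curves $E_{/\Q}$ with $\op{rank}E(\Q)=0$, good reduction at $2$ and $3$, good ordinary reduction at $p$, $E[p]$ irreducible, and satisfying the three divisibility conditions above, then $\mathscr{E}_p^{\circ}\subseteq\mathscr{E}_p$, so it suffices to bound $\#\mathscr{E}_p^{\circ}(x)$ from below.

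Next I would write $\mathscr{E}_p^{\circ}=\mathscr{E}'\setminus(A_1\cup\cdots\cup A_5)$ up to the density-zero set of curves with complex multiplication, where $A_1,\dots,A_5\subseteq\mathscr{E}'$ are, respectively, the curves with $E[p]$ reducible, with bad reduction at $p$, with $p\mid\#\Sha(E/\Q)$, with $p\mid c_\ell(E)$ for some prime $\ell\neq p$, and with supersingular or anomalous reduction at $p$; then the desired bound is $\frac16$ (Conjecture \ref{RDC}) minus the sum of the upper densities of the $A_i$, measured against $\#\mathscr{E}(x)\sim\frac{4}{\zeta(10)}x^{5/6}$. I would estimate the five terms as follows. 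First, $A_1$ has density $0$: by Mazur's theorem on rational isogenies, for $p\geq 11$ the $j$-invariant of a curve with reducible $E[p]$ lies in a thin set --- a finite set, or the image of the degree-$14$ map $X_0(13)(\Q)\to\mathbb{P}^1_j$ when $p=13$ --- which contributes only $O(x^{1/6})$ curves of height $<x$. Second, bad reduction at $p\geq 5$ of the minimal model $y^2=x^3+ax+b$ means $p\mid 4a^3+27b^2$, satisfied by exactly $p$ of the $p^2$ residue classes modulo $p$, so $A_2$ has upper density at most $\frac1p$. Third, $A_3$ has upper density at most $\frac12\bigl(1-\prod_{i}(1-p^{-(2i-1)})\bigr)$ by Conjecture \ref{Del}. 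Fourth, for curves in $\mathscr{E}'$ the relation $p\mid c_\ell(E)$ forces $\ell\geq 5$ and split multiplicative reduction of Kodaira type $\op{I}_n$ at $\ell$ with $p\mid n$, a condition of local density $\ll\ell^{-p}$ that is automatically minimal at $\ell$, so summing over $\ell$ gives $A_4$ upper density at most $\zeta(p)-1$. Fifth, a curve with good reduction at $p$ that is supersingular ($a_p=0$) or anomalous ($a_p=1$, i.e.\ $p\mid\#\widetilde{E}(\F_p)$) reduces modulo $p$ into $\mathfrak{T}_p$, so by Lemma \ref{lemma 6.5} the set $A_5$ has upper density at most $\zeta(10)\,c_1\,\frac{\log p\,(\log\log p)^2}{\sqrt p}$. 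Combining these gives the displayed inequality; since every subtracted term tends to $0$ as $p\to\infty$ we obtain $\liminf_p\liminf_x \#\mathscr{E}_p(x)/\#\mathscr{E}(x)\geq\frac16$, while the reverse inequality is immediate from $\mathscr{E}_p\subseteq\mathscr{E}'$ together with Conjecture \ref{RDC}.

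I expect the real work to be concentrated in two places. First, the reduction of \eqref{6.5 c5} to the three divisibility conditions must be justified with care: beyond Theorem \ref{thm section 4 main} it uses that for a rank $0$ curve with $E[p]$ irreducible the module $\op{Sel}_{p^\infty}(E/\Q_\infty)$ contains no proper finite-index submodule, so that vanishing of its Iwasawa invariants --- equivalently $\chi_t(\Gamma_\Q,E[p^\infty])=1$, equivalently, via Corollary \ref{cor ECF}, the three divisibilities --- already forces $\op{Sel}_{p^\infty}(E/\Q_\infty)=0$ and hence $p\in\mathcal{P}_E$. Second, the estimates for $A_2$ and $A_4$ require genuine local density computations in the parametrization by minimal Weierstrass equations together with the correct comparison to $\#\mathscr{E}(x)$; the point to be careful about is that, for a condition supported at a single prime, the minimality requirement at that prime (part of the definition of $\mathscr{E}$) exactly absorbs the factor $\zeta(10)$ coming from $\#\mathscr{E}(x)\sim\frac4{\zeta(10)}x^{5/6}$, which is why the bounds $\frac1p$ and $\zeta(p)-1$ occur without it, whereas $A_5$, bounded over all of $\F_p\times\F_p$ via Lemma \ref{lemma 6.5}, retains the numerically negligible factor $\zeta(10)$.
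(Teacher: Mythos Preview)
Your proposal is correct and follows essentially the same approach as the paper: both argue that $\mathscr{E}_p$ contains $\mathscr{E}'$ minus five exceptional families (reducible $E[p]$; bad reduction at $p$; $p\mid\#\Sha$; $p\mid\prod c_\ell$; supersingular or anomalous at $p$), bound each exceptional density separately, and subtract from the $\tfrac16$ supplied by Conjecture \ref{RDC}. The only differences are in how the individual densities are justified: the paper cites Duke's result for the reducible locus where you invoke Mazur's isogeny theorem, and it cites \cite[Proposition 4.2]{stats2}, \cite[Corollary 8.8]{HKR}, and \cite[Theorem 4.14]{kunduray1} for the bounds $\tfrac1p$, $\zeta(p)-1$, and $\zeta(10)\cdot\#\mathfrak{T}_p/p^2$ respectively, whereas you sketch direct local-density arguments for these; the structure and the resulting inequality are identical.
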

\begin{proof}
Suppose that the conditions \eqref{6.5 c1}-\eqref{6.5 c4} are satisfied for $(E,p)$. Then, if $p\in \mathcal{P}_E$ it follows from Theorem \ref{thm section 4 main} that $E$ is diophantine stable and $\Sha$-stable at $p$.
For $i=1,\dots, 5$ let $\mathscr{E}_p^{(i)}$ be the subset of $\mathscr{E}$ defined as follows:
\begin{enumerate}
    \item $\mathscr{E}_p^{(1)}$ is the subset of all elliptic curves of rank 0 such that $p|\#\Sha(E/\Q)$. According to (Del-p),
    \[\limsup_{x\rightarrow \infty} \frac{\#\mathscr{E}_p^{(1)}(x)}{\#\mathscr{E}(x)}=\frac{1}{2}\left(1-\prod_i \left(1-\frac{1}{p^{2i-1}}\right)\right).\]
    \item Let $\mathscr{E}_p^{(2)}$ be the subset of all elliptic curves such that $p\mid \prod_{\ell\neq p} c_\ell(E/\Q)$. It follows from \cite[Corollary 8.8]{HKR} that 
    \[\limsup_{x\rightarrow \infty} \frac{\#\mathscr{E}_p^{(2)}(x)}{\#\mathscr{E}(x)}\leq (\zeta(p)-1) \]
    \item Let $\mathscr{E}_p^{(3)}$ be the subset of all elliptic curves such that at least of the following conditions hold:
    \begin{enumerate}
        \item $p\mid \#\widetilde{E}(\F_p)$,
        \item $p$ has good supersingular reduction at $p$. 
    \end{enumerate} Note that since $p\geq 11$, it follows from the Hasse bound that this is the case precisely when $\#\widetilde{E}(\F_p)=p+1-t$, where $t\in \{0,1\}$. We have that
    \[\limsup_{x\rightarrow \infty} \frac{\#\mathscr{E}_p^{(3)}(x)}{\#\mathscr{E}(x)}\leq \zeta(10)\frac{\#\mathfrak{T}_p}{p^2}, \]see the proof of \cite[Theorem 4.14]{kunduray1}. It follows from Lemma \ref{lemma 6.5} that 
    \[\#\mathfrak{T}_p\leq c_1 p^{\frac{3}{2}} \log p \cdot (\log \log p)^2.\]
    Therefore, we have that 
    \[\limsup_{x\rightarrow \infty} \frac{\#\mathscr{E}_p^{(3)}(x)}{\#\mathscr{E}(x)}\leq \frac{\zeta(10) c_1\log p\cdot (\log \log p)^2}{\sqrt{p}}. \]
    \item Let $\mathscr{E}_p^{(4)}$ be the subset of all elliptic curves such that $E[p]$ is reducible as a Galois module. It follows from the main result of \cite{duke} that  \[\limsup_{x\rightarrow \infty} \frac{\#\mathscr{E}_p^{(4)}(x)}{\#\mathscr{E}(x)}=0. \]
     \item Let $\mathscr{E}_p^{(5)}$ be the subset of all elliptic curves such $E$ has bad reduction at $p$, it follows from \cite[Proposition 4.2 (1)]{stats2} that \[\lim_{x\rightarrow \infty} \frac{\#\mathscr{E}_p^{(5)}(x)}{\#\mathscr{E}(x)}= \frac{1}{p}. \]
\end{enumerate}
Note that $\mathscr{E}_p$ is contained in $\mathscr{E}'\backslash \left(\bigcup_{i=1}^5 \mathscr{E}_p^{(i)}\right)$. Therefore,
\[\liminf_{x\rightarrow \infty} \frac{\# \mathscr{E}_p(x)}{\#\mathscr{E}(x)}\geq \liminf_{x\rightarrow \infty} \frac{\# \mathscr{E}'(x)}{\#\mathscr{E}(x)}-\sum_{i=1}^5 \limsup_{x\rightarrow \infty} \frac{\# \mathscr{E}_p^{(i)}(x)}{\#\mathscr{E}(x)}.\]
The result follows.
\end{proof}

\end{document}